\newtheorem{theorem}{Theorem}[section]
\newtheorem*{theorem*}{Theorem}
\newtheorem{lemma}[theorem]{Lemma}
\newtheorem*{lemma*}{Lemma}
\newtheorem{proposition}[theorem]{Proposition}
\newtheorem{claim}[theorem]{Claim}
\newtheorem{corollary}[theorem]{Corollary}
\newtheorem{Definition}[theorem]{Definition}
\newenvironment{definition}
{\begin{Definition}\rm}{\end{Definition}}
\newtheorem{Example}[theorem]{Example}
\newtheorem{Algorithm}[theorem]{Algorithm}
\newenvironment{algorithm}
{\begin{Algorithm}\rm}{\end{Algorithm}}
\newtheorem{Heuristic}[theorem]{Heuristic}
\newenvironment{heuristic}
{\begin{Heuristic}\rm}{\end{Heuristic}}
\theoremstyle{definition}
\newcommand{\E}{\mathcal{E}}
\newcommand{\Tam}{\textup{Tam}}
\newcommand{\Des}{\textup{Des}}
\newcommand{\cov}{\textup{cov}}
\newcommand{\MC}{\textup{MC}}
\newcommand{\Av}{\textup{Av}}
\newcommand*{\Nearrow}{\rotatebox[origin=c]{45}{\(\Longrightarrow\)}}
\newcommand{\tpitchfork}{%
  \vbox{
    \baselineskip\z@skip
    \lineskip-.52ex
    \lineskiplimit\maxdimen
    \m@th
    \ialign{##\crcr\hidewidth\smash{$-$}\hidewidth\crcr$\pitchfork$\crcr}
  }%
}
\DeclareMathOperator{\ord}{\textrm{ord}}
\newcommand{\dfn}[1]{\textcolor{blue}{\emph{#1}}} 
\newcommand\numberthis{\addtocounter{equation}{1}\tag{\theequation}}
\title{Tighter Bounds on the Expected Absorbing Time of Ungarian Markov Chains}
\author{Eric Shen}
\date{May 2024}
\begin{document}

\maketitle

\begin{abstract}
    In $2023$, Defant and Li defined the Ungarian Markov chain $\mathbf{U}_L$ associated to a finite lattice $L$. This Markov chain has state space $L$, and from any state $x \in L$ transitions to the meet of $\{x\} \cup T$, where $T$ is a randomly selected subset of the elements of $L$ covered by $x$. For any lattice $L$, let $\E(L)$ be the expected number of steps until the maximal element of $L$ transitions into the minimal element in the Ungarian Markov chain. We show that $\mathcal{E}(L)$ is linear in $n$ when $L$ is the weak order on the symmetric group $S_n$, and satisfies an $n^{1-o(1)}$ lower bound when $L$ is the $n^\text{th}$ Tamari lattice. This completely resolves a conjecture by Defant and Li and partially resolves another.
\end{abstract}

\section{Introduction}\label{sIntro}

Consider a positive integer $n$ and any permutation $\sigma \in S_n$. An index $i \in [n - 1]$ is a \dfn{descent} of $\sigma$ if $\sigma(i) > \sigma(i+1)$. Let $\Des(\sigma)$ denote the set of descents of $\sigma$. Given a permutation $\sigma \in S_n$ and a subset $T \subset \Des(\sigma)$, the \dfn{Ungar move} on $\sigma$ associated with $T$ consists of reversing every descent in $T$, where consecutive descents in $T$ are treated as blocks to be reversed. For example, given $\sigma = 416523$, if only index $1$ is selected then the Ungar move sends
$\textcolor{black}{41}6523 \rightarrow \textcolor{black}{14}6523$, while if both indices $3$ and $4$ are selected, then the Ungar move sends $41\textcolor{black}{652}3 \rightarrow 41\textcolor{black}{256}3$. We call an Ungar move on $\sigma$ associated to $T$ \dfn{nontrivial} if $T \neq \varnothing$, and the \dfn{maximal Ungar move} on $\sigma$ is the move associated to $T = \Des(\sigma)$.

Ungar moves were first studied by Ungar \cite{ungar1982} to answer a question posed by Scott \cite{scott1970ungarquestion}, on the minimum number of distinct slopes formed by the lines connecting each pair of points within a set of $n$ distinct points (not all collinear) in a plane. To this end, Ungar proved the following bounds on the number of Ungar moves necessary to transform a given permutation into the identity.

\begin{theorem}(\cite{ungar1982})
    Let $n \geq 4$ be an integer.
    \begin{enumerate}[(a)]
        \item At most $n - 1$ \textit{maximal} Ungar moves suffice to send any permutation to the identity.
        \item Consider a sequence of Ungar moves which send $n(n-1) \cdots 1$ to $12 \cdots n$. Given that the first move in this sequence is nontrivial and not maximal, the total number of moves in the sequence must be at least $n$ if $n$ is even, and at least $n - 1$ if $n$ is odd.
    \end{enumerate}
\end{theorem}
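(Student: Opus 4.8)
Both bounds are Ungar's; here is how I would prove them.

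\textbf{Part (a).} The maximal Ungar move is exactly the pop-stack sorting operator $\textup{Pop}\colon S_n\to S_n$, so (a) asserts that $\textup{Pop}^{\,n-1}$ collapses all of $S_n$ onto the identity $12\cdots n$. I would track the largest letter. If $n$ heads the maximal descending run of $\sigma$ occupying positions $[a,b]$ (so $a=\sigma^{-1}(n)$ and $b\ge 2$), then $\textup{Pop}$ moves $n$ to position $b$, and in $\textup{Pop}(\sigma)$ the letter $n$ either already sits at position $n$ or again heads a descending run, which is now of length exactly $2$: its right neighbour is the minimum of the next reversed block, after which the word increases. Hence after the first step $n$ advances by exactly one position at each subsequent step until it is parked at position $n$, taking $1+(n-b)\le n-1$ steps in all. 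Once $n$ is parked, deleting it commutes with $\textup{Pop}$ (position $n-1$ never starts a descending run reaching position $n$), so induction on $n$ sorts the remaining letters in $n-2$ further steps — but this phased argument only yields $2n-3$. The real content, and the main obstacle, is that $1,\dots,n-1$ are being sorted \emph{while} $n$ travels: one must control the joint dynamics and show the identity is reached by step $n-1$, exploiting that $n$ crawls slowly precisely when its descending run is short, which is exactly the situation in which $\textup{Pop}$ does a lot of work on the smaller letters.

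\textbf{Part (b).} For $\sigma\in S_n$ let $g(\sigma)$ denote the number of $i\in[n-1]$ for which $i$ precedes $i+1$ in $\sigma$; thus $g(n\cdots1)=0$ and $g(12\cdots n)=n-1$. Any block reversed by an Ungar move is a contiguous piece of a descending run, so inside it the larger of each pair $\{i,i+1\}$ comes first, and reversing it can only change ``$i$ precedes $i+1$'' from false to true. Hence $g$ is nondecreasing along every legal sequence of moves. On $\sigma=n\cdots1$, a move reversing blocks of lengths $\ell_1,\dots,\ell_r$ determined by a chosen set $T\subseteq[n-1]$ of descents satisfies $\sum_j(\ell_j-1)=|T|$, and exactly the $|T|$ adjacent pairs $i,i+1$ both of whose positions lie in a single reversed block become ordered — so $g$ jumps from $0$ straight to $|T|$. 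A maximal first move has $|T|=n-1$ and is done immediately; a nontrivial non-maximal first move has $1\le|T|\le n-2$, and in addition leaves every pair $\{i,i+1\}$ straddling a reversed block separated by that block's (now increasing) interior. I would then show that reuniting such a separated pair forces the smaller letter to be shuttled across one position at a time — the relevant descending runs in the intermediate permutations again have length $2$, just as in part (a) — which costs $\Theta(n)$ further moves; the sharp even/odd threshold should then come from a parity argument, for instance tracking signs (reversing a length-$\ell$ block has sign $(-1)^{\lfloor\ell/2\rfloor}$, and the composite of all the moves must realise $\operatorname{sgn}(n\cdots1)=(-1)^{\binom n2}$), which pins down the number of single-step shuttles modulo $2$. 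Making that parity bookkeeping precise — and thereby separating the bound $n$ for even $n$ from the bound $n-1$ for odd $n$ — is the main obstacle.
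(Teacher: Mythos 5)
The paper does not prove this theorem; it is cited directly from Ungar's 1982 paper, so there is no internal proof to compare your attempt against. Taking your sketch on its own terms: in part (a) the largest-letter argument is sound as far as it goes --- after one application of $\textup{Pop}$ the letter $n$ heads a descending run of length exactly $2$ in each subsequent iterate and advances one slot per pass --- but, as you say yourself, this phased count only yields $2n-3$, and the genuine content (that $1,\dots,n-1$ finish sorting within the same $n-1$ passes that park $n$) is left open. In part (b) the statistic $g(\sigma)$ counting indices $i$ with $i$ preceding $i+1$ is indeed weakly increasing under Ungar moves, and your computation that a nontrivial non-maximal first move on $n\cdots1$ raises $g$ from $0$ to exactly $|T|\le n-2$ is correct. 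However, monotonicity of $g$ gives no lower bound on the number of moves, since a single later move (e.g.\ a maximal one) can jump $g$ all the way to $n-1$. The parity bookkeeping you gesture at --- that the composite of the reversals has sign $(-1)^{\binom{n}{2}}$ --- constrains $\sum_j\lfloor\ell_j/2\rfloor$ over all reversed blocks modulo $2$, not the number of moves; turning that into the sharp $n$ versus $n-1$ dichotomy presupposes precisely the ``one-slot shuttle'' structure that your earlier sketch does not establish. Both halves therefore halt at the step you flag as the main obstacle; this is an honest opening, not a proof.
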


In \cite{defant2023ungarian}, Defant and Li studied the Ungar moves in a probabilistic setting by introducing random Ungar moves, defined as follows. Fix $p \in (0, 1]$. Then, given any $\sigma \in S_n$, randomly select a subset $T \subset \Des(\sigma)$ by including any element of $\Des(\sigma)$ into $T$ with independent probability $p$. A \dfn{random Ungar move} on $\sigma$ is now the Ungar move associated with $T$. Note that any nontrivial Ungar move decreases the number of inversions, and hence repeatedly applying any sequence of nontrivial Ungar moves to any permutation will eventually reduce it to the identity. Motivated by this, Defant and Li raised the question of the expected number of random Ungar moves that must be applied to the decreasing permutation $n(n-1) \cdots 1$ until we reach the identity permutation $12 \cdots n$. To this end, they defined the \dfn{Ungarian Markov chain} $\textbf{U}_{S_n}$ on $S_n$ (with parameter $p$) as the Markov chain whose states are the elements of $S_n$, and transitions are given by the random Ungar moves \cite{defant2023ungarian}. They also defined $\E_p(S_n)$ to be the expected number of steps for the permutation $n(n-1) \cdots 1$ to transition to the identity $12 \cdots n$. In particular, they derived the following bound on $\E_p(S_n)$:
\begin{theorem}
        \cite[Theorem 1.9]{defant2023ungarian} Fix a $p \in (0, 1)$. Then
    \begin{align*}
        n - 1 + o_p(1) &\leq \E_p(S_n) \leq \frac{8}{p}n \log n + O_p(n)
    \end{align*}
    as $n \rightarrow \infty$.
\end{theorem}
Defant and Li also conjectured that for fixed $p \in (0, 1)$, $\E_p(S_n)$ is linear in $n$. The first main result of this paper proves this conjecture.
\begin{theorem}\label{thmMainTheorem1}
    Fix a $p \in (0, 1)$. Then
    \begin{align*}
        \E_p(S_n) \leq \bigg(\frac{1+\sqrt{1-p}}{p}\bigg)n + o_p(n).
    \end{align*}
    as $n \rightarrow \infty$.
\end{theorem}
\subsection{Ungarian Markov Chains on Lattices}\label{ssIntroLattice}

Throughout this article, we will assume all partially ordered sets (posets) to be finite. A \dfn{lattice} $L$ is a poset such that any two elements $x, y \in L$ have a greatest lower bound, denoted with $x \wedge y$ and called their ``\dfn{meet},'' and a lowest upper bound, denoted with $x \vee y$ and called their ``\dfn{join}.'' In \cite{defant2023ungarian} Defant and Li noted that by considering $S_n$ under the right weak order, the definition of the Ungar move generalizes readily to any lattice. In particular, for any $y \in L$, define $\cov_L(y) = \{x \in L : x \lessdot y\}$ to be the set of elements that $y$ covers in $L$. Then picking a subset $T \subset \Des(\sigma)$ and swapping all the selected descents is equivalent to picking a subset $T \subset \cov_{S_n}(\sigma)$ and sending $\sigma \mapsto \bigwedge (\{\sigma\} \cup T)$.

In this manner Defant and Li generalized the definition of random Ungar moves and the Ungarian Markov chain as follows.
\begin{definition}
    Fix some $p \in (0, 1]$ and a lattice $L$. For any element $x \in L$, a \dfn{random Ungar move} on $x$ is an operation that randomly picks a subset $T \subset \cov_L(x)$ by including each element of $\cov_L(x)$ with independent probability $p$, and then maps $x \mapsto \bigwedge (\{x\} \cup T)$. The \dfn{Ungarian Markov chain} $\textbf{U}_L$ on $L$ is defined as the Markov chain with state space $L$, satisfying that for all $x, y \in L$, the transition probability of $x \rightarrow y$ is given by
    \begin{align*}
        \mathbb{P}(x \rightarrow y) &= \sum_{\substack{T \subset \cov_L(x) \\ \bigwedge (\{x\} \cup T) = y}} p^{|T|}(1-p)^{|\cov_L(x)| - |T|}.
    \end{align*}
\end{definition}

It is well-known that every finite lattice $L$ has a minimal element $\hat{0}$ and a maximal element $\hat{1}$. Evidently from any state in the Ungarian Markov chain $\textbf{U}_L$ on a lattice $L$, we may reach $\hat{0}$ in a finite number of steps (i.e., the Markov chain is absorbing). Hence we may generally define $\E_p(L)$ to be the expected number of steps until $\hat{1}$ transitions to $\hat{0}$ in $\textbf{U}_L$.

In \cite{defant2023ungarian}, aside from the lattices $S_n$, Defant and Li also studied the Ungarian Markov chains of another family of lattices: the Tamari lattices $\Tam_n$, first defined by Dov Tamari in \cite{tamari1962}. The $n^\text{th}$ Tamari lattice may be interpreted as a sublattice of $S_n$ as follows. Call a permutation $\sigma \in S_n$ \dfn{$312$-avoiding} if there do not exist indices $1 \leq i_1 < i_2 < i_3 \leq n$ such that $\sigma(i_1) > \sigma(i_3) > \sigma(i_2)$. Let $\Av_n(312)$ denote the subset of $S_n$ consisting of the $312$-avoiding permutations. Consider $S_n$ as a lattice under the right weak order, and let $\Av_n(312)$ inherit a poset structure from $S_n$. Then in fact $\Av_n(312)$ is a sublattice of $S_n$ (i.e. it is closed under the meet and join operations), and the sublattice $\Av_n(312)$ is isomorphic to $\Tam_n$ as a poset \cite[Theorem 7.8]{reading2006cambrian1}.

The second main objective of this paper is to study the asymptotics of $\E_p(\Tam_n)$. In \cite{defant2023ungarian}, Defant and Li derived an explicit asymptotic upper bound on $\E_p(\Tam_n)$; to introduce it we will first define some functions. Let $\zeta_p(n)$ be the probability that the maximum of $n$ independent geometric random variables, each with expected value $1/p$, is attained uniquely. In \cite{thomas1990geommax}, Bruss and O'Cinneide proved that if we define
\begin{align*}
    \Upsilon_p(x) &= \begin{cases} px\sum_{k \in \mathbb{Z}}(1-p)^k e^{-(1-p)^kx} & p < 1 \\ 0 & \text{else}, \end{cases}
\end{align*}
then $\lim_{n \rightarrow \infty} \zeta_p(n) - \Upsilon_p(n) = 0$. Now, let $\zeta_p^- = \liminf_{n \rightarrow \infty} \zeta_p(n)$ and $\zeta_p^+ = \limsup_{n \rightarrow \infty} \zeta_p(n)$. Note that $\Upsilon_p((1-p)x) = \Upsilon_p(x)$ for all $x > 0$, so thus
\begin{align*}
    \zeta_p^+ &= \max_{0 < x < 1} \Upsilon_p(x), \\
    \zeta_p^- &= \min_{0 < x < 1} \Upsilon_p(x) \geq p(1-p)e^{p-1}.
\end{align*}
Now, Defant and Li established the following upper bound on $\E_p(\Tam_n)$:
\begin{theorem}
    \cite[Theorem 1.21]{defant2023ungarian} For fixed $p \in (0, 1)$, we have that
    \begin{align*}
    \E_p(\Tam_n) &\leq \frac{2}{p}\bigg(\sqrt{\zeta_p^+(1+\zeta_p^+)}-\zeta_p^+\bigg)n + o_p(n).
\end{align*}
\end{theorem}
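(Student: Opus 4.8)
The plan is to leverage the realization of $\Tam_n$ as a Cambrian lattice of type $A_{n-1}$ and to couple the descent of $\hat 1$ under the Ungarian Markov chain with a last-passage percolation process with geometric weights, refining the argument used for the Cambrian lattices $\textup{Camb}_{c_n}$ so as to account for the one extra feature of $\Tam_n$ that produces the factor $\zeta_p^-$.

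First I would fix a concrete combinatorial model of $\Tam_n$ — binary trees with $n$ internal nodes, or equivalently triangulations of a convex $(n+2)$-gon — in which $\hat 1$ and $\hat 0$ are the two comb trees, the set $\cov(x)$ of covers of an element $x$ consists of the results of the available down-rotations, and the meet $\bigwedge(\{x\}\cup T)$ of $x$ with a selected set $T\se\cov(x)$ of its covers has an explicit description as ``perform all compatible rotations in $T$ simultaneously, then close up.'' Running the chain from $\hat 1$, I would record, for each internal node (equivalently each diagonal), the random step at which it first reaches its position in $\hat 0$. The crux is to show that this array of times obeys a corner-growth recursion — the time at a cell is the maximum of the times at its predecessor cells plus an independent waiting increment $w_{\text{cell}}$ — so that the absorbing time $T$ equals a last-passage time $\max_\pi\sum_{\text{cells}\in\pi}w_{\text{cell}}$ over monotone lattice paths $\pi$ in a region whose two side lengths are each $\Theta(n)$.

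Second I would analyze the increments $w_{\text{cell}}$. Here is exactly where $\Tam_n$ departs from the generic Cambrian case: advancing a given cell requires a particular rotation to be both selected and to ``win'' against several competing rotations incident to the same node, and the probability that this occurs in a single round is, up to lower-order terms, the probability $\zeta_p(k)$ that the maximum of $k$ independent geometric random variables, each with expected value $1/p$, is attained uniquely, $k$ being the current number of competitors. Since $\zeta_p(k)\geq\zeta_p^- - o(1)$ as $k\to\infty$ (and $\zeta_p^-\geq p(1-p)e^{p-1}>0$), each $w_{\text{cell}}$ is stochastically dominated by a geometric variable whose parameter is controlled by $p$ and $\zeta_p^-$, and these dominating variables may be coupled to be mutually independent; hence $T$ is stochastically dominated by a last-passage time with i.i.d.\ geometric-type weights.

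Finally I would apply the law of large numbers for last-passage percolation with geometric weights: over a rectangle of the pertinent aspect ratio the last-passage time, divided by $n$, converges to an explicit function of the weight parameter. Substituting the parameter coming out of the previous step and simplifying — using in particular the identity $\sqrt{\zeta_p^-(1+\zeta_p^-)}-\zeta_p^- = \sqrt{\zeta_p^-}\big/\big(\sqrt{\zeta_p^-}+\sqrt{1+\zeta_p^-}\big)$ — collapses the limit to $\tfrac{2}{p}\big(\sqrt{\zeta_p^-(1+\zeta_p^-)}-\zeta_p^-\big)n$, with the error terms (convergence rate of the LPP law, approach of $\zeta_p(k)$ to its $\liminf$, and the $o(1)$ slack in the domination) absorbed into $o_p(n)$. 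The main obstacle is the content of the second and third paragraphs: showing that simultaneous rotations followed by a meet in $\Tam_n$ genuinely satisfy a monotone corner-growth recursion on a grid, so that the percolation machinery applies at all, and pinpointing why the per-cell clock is the ``unique maximum of several geometrics'' distribution rather than a single geometric. Granting that coupling, what remains is a routine appeal to known percolation asymptotics together with elementary algebra.
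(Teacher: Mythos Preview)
The paper does not contain a proof of this theorem; it is stated in the introduction purely as a citation of prior work by Defant and Li (reference~\cite{defant2023ungarian}), to motivate the new lower bound in Theorem~\ref{maintheorem2}. There is therefore no ``paper's own proof'' against which to compare your proposal.

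That said, as a standalone attempt your proposal is not a proof but an outline with the essential technical content explicitly deferred. You yourself flag ``the main obstacle'' as establishing that the dynamics on $\Tam_n$ obey a corner-growth recursion on a grid and pinpointing the per-cell waiting distribution, and you then write ``granting that coupling'' before invoking the percolation asymptotics. These are precisely the steps that carry all the weight. In particular, $\Tam_n$ is not distributive for $n\ge 3$, so the clean identification of the Ungarian chain with last-passage percolation that Section~\ref{prelim} gives for lattices of the form $J(P)$ does not apply directly; one needs a nontrivial stochastic-domination argument, not merely a relabeling of cells. Likewise, your account of why $\zeta_p^-$ appears --- a rotation must be selected and ``win'' against competitors, which happens with probability roughly $\zeta_p(k)$ --- is a heuristic, not an inequality: you would need to exhibit an explicit coupling in which the relevant increment is dominated by a geometric variable with a parameter you can compute, and to control the number of competitors uniformly. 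Your outline gestures in a direction consistent with what the introduction says about Defant and Li's methods (lattice-theoretic techniques plus last-passage percolation with geometric weights), but as written it leaves the substantive claims unproven.
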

Defant and Li also presented the question of establishing any nontrivial lower bound on $\E_p(\Tam_n)$. The second main result of this paper establishes the following bound on $\E_p(\Tam_n)$, which notably for fixed $p$ is $n^{1 - o(1)}$.
\begin{theorem}\label{thmMainTheorem2}
There exists an absolute constant $\Cl[abcon]{1}$ such that for all $p \in (0, 1)$ and $n \geq 16$, we have that
    \begin{align*}
        \E_p(\Tam_n) &\geq \zeta_p^-n\exp\Big(-p^8\exp(\Cr{1}/p^2)(\log \log n)^4\Big)
    \end{align*}
\end{theorem}

The paper is now organized as follows. Section \ref{sPrelim} presents introductory material on statistics and well-known Markov processes (notably, the multicorner growth Totally Asymmetric Exclusion Process) that will be used to prove Theorems \ref{thmMainTheorem1} and \ref{thmMainTheorem2}. Section \ref{sSnProof} proves Theorem \ref{thmMainTheorem1}. Section \ref{sTamariBackground} presents an interpretation of $\Tam_n$ as a poset on $n$-vertex ordered forests that will be used to prove Theorem \ref{thmMainTheorem2}. Section \ref{sTamnProof} proves Theorem \ref{thmMainTheorem2}.

\subsection{Acknowledgements}

This work was done at the University of Minnesota Duluth with support from Jane Street Capital and the National Security Agency. This work was also supported by NSF Grant 2140043. The author would like to thank Joe Gallian and Colin Defant for organizing the Duluth REU and proofreading the paper. The author would also like to thank Evan Chen, Daniel Zhu, Mitchell Lee, and Rupert Li for their many helpful comments on the paper, as well as Noah Kravitz for many helpful discussions.

\section{Preliminaries}\label{sPrelim}

Throughout this paper, the sequence $c_1, c_2, \dots$ denotes a sequence of positive, absolute constants. Moreover, any implied constants are assumed to be positive and absolute, unless otherwise indicated. Implied constants that are not absolute are indicated by subscripts denoting the dependence.

\subsection{Background on Posets}

For further reference, see \cite{stanley1986enumerativecombo}.

Let $P$ be a poset. A \dfn{subposet} of $P$ is a subset $P' \subset P$ equipped with a partial order $\leq'$, such that for any $x, y \in P'$, we have that $x \leq' y$ in $P'$ if and only if $x \leq y$ in $P$. For any $x, y \in P$, we say that $x$ \dfn{covers} $y$ (which we denote using $x \gtrdot y$) if $x > y$ and there does not exist a $z \in P$ such that $x > z > y$. The \dfn{Hasse diagram} of a poset $P$ is a drawing, where the vertices are the elements of $P$, and for any two $x, y \in P$, if $x \gtrdot y$, then the corresponding vertex of $x$ is connected to the corresponding vertex of $y$ via an edge where $x$ is drawn higher (vertically) than $y$. Now, a \dfn{chain} of $P$ is a subset $\mathcal{C} \subset P$ such that all its elements are comparable; such a chain is \dfn{maximal} if there does not exist another chain $\mathcal{C}'$ for which $\mathcal{C} \subsetneq \mathcal{C}'$. The \dfn{length} of such a chain $\mathcal{C}$ is $|\mathcal{C}| - 1$. Finally, a finite lattice is \dfn{graded} if all of its maximal chains are of the same length.

\subsection{Probabilistic Background}

In this section we will prove two useful probability lemmas that will be used in the proofs of Theorem \ref{thmMainTheorem1} and \ref{thmMainTheorem2}. These two lemmas will both follow from standard applications of well-known results in statistics (e.g., the Markov bound, Chernoff bounds, random walk theory).

\subsubsection{Sum of Geometric Variables}

Throughout the proof of Theorem \ref{thmMainTheorem2}, we will often use the following tail bounds on a sum of independent geometric random variables. Here we use the convention that a \dfn{geometric random variable with parameter $p$} is a random variable $X$ which satisfies that for all positive integers $k$, $\mathbb{P}(X = k) = (1-p)^{k-1}p$.
\begin{lemma}\label{lemmaSumGeomBound}
    Fix a $p \in (0, 1]$. Let $G_1, G_2, \dots, G_k$ be independent geometric random variables with parameter $p$. Then for all $t > 0$, we have that
    \begin{align*}
        \mathbb{P}\bigg(\sum_{i=1}^k G_i > \frac{k}{p} + t\sqrt{\frac{k}{p^3}} \bigg) \leq e^{-\frac{t^2}{2p + 2t\sqrt{p/k}}} \quad \textup{and} \quad 
        \mathbb{P}\bigg(\sum_{i=1}^k G_i < \frac{k}{p} - t\sqrt{\frac{k}{p^3}} \bigg) \leq e^{-\frac{t^2}{2p - t\sqrt{p/k}}}.
    \end{align*}
\end{lemma}
\begin{proof}
    We only prove the first bound, as the proof for the other bound is analogous. Extend $G_1, G_2, \thinspace \dots$ to an infinite sequence. Let
    \begin{align*}
        B_i &= \begin{cases} 1 & i = \sum_{j = 1}^m G_j \text{ for some }m \\
        0 & \text{else.} \end{cases}
    \end{align*}
    Then evidently each $B_i$ is given by an independent Bernoulli variable with parameter $p$ (i.e. $\mathbb{P}(B_i = 1) = p$). Let $\mu = k + t\sqrt{k/p}$ and $\delta = \frac{t/\sqrt{kp}}{1 + t/\sqrt{kp}}$; note that $(1 - \delta)\mu = k$. We thus have that by a multiplicative Chernoff bound,
    \begin{align*}
        \mathbb{P}\bigg(\sum_{i=1}^k G_i > \frac{k}{p} + t\sqrt{\frac{k}{p^3}} \bigg) = \mathbb{P}\Bigg(\sum_{i=1}^{  \lfloor k/p + t\sqrt{k/p^3}  \rfloor } B_i < k \Bigg) \leq e^{-\delta^2 \mu /2} = e^{-\frac{t^2}{2p + 2t\sqrt{p/k}}},
    \end{align*}
    as desired.
\end{proof}

\subsubsection{Simple Random Walks}

Consider an infinite sequence $X_1, X_2, \ldots$ of independent and identically distributed (i.i.d.) random variables in $\mathbb{Z}$, where
\begin{align*}
    X_i &= \begin{cases} 1 & \text{with probability }\frac{1}{2} \\ -1 & \text{with probability }\frac{1}{2}. \end{cases}
\end{align*}

The \dfn{simple random walk on $\mathbb{Z}$} is the random process given by the infinite sequence of random variables $S_1, S_2, \thinspace \ldots$ where $S_n = X_1 + \cdots + X_n$. The \dfn{hitting time} of any nonzero integer $m$ is the random variable $\tau_m$, denoting the smallest positive integer $t$ for which $S_t = m$. Note that $\tau_m$ is a sum of $m$ i.i.d.\ copies of $\tau_1$ for all positive integers $m$ (and by symmetry, $\tau_m = \tau_{-m}$). Now, it is well known that the generating function $\mathbb{E}[z^{\tau_1}]$ of $\tau_1$ satisfies
\begin{align*}
    \mathbb{E}[z^{\tau_1}]  = \sum_{i=0}^{\infty} \mathbb{P}(\tau_1 = n)z^n &= \frac{1 - \sqrt{1-z^2}}{z} \\
    &= \sum_{n=0}^{\infty} (-1)^n\binom{1/2}{n+1}z^{2n+1}.
\end{align*}
In particular, by elementary calculus we may find that $(-1)^n\binom{1/2}{n+1} = \Theta(n^{-3/2})$.
Using the information above, we may derive the following bound.
\begin{lemma}\label{lemmaRandomWalk}
    There exists an absolute constant $\Cl[abcon]{hittingtime} \leq \frac{1}{2}$ such that for all positive integers $n$, 
    \begin{align*}
        \mathbb{P}(\tau_m \geq n^2) &\geq 1 - e^{-\Cr{hittingtime}m/n}.
    \end{align*}
\end{lemma}
\begin{proof}
    By the above, we have that there exists an absolute constant $\Cl[abcon]{hittingtime0}$ such that for all positive integers $n$,
    \begin{align*}
        \mathbb{P}(\tau_1 = 2n - 1) &\geq \Cr{hittingtime0}n^{-3/2}.
    \end{align*}
    Thus, there exists an absolute constant $\Cr{hittingtime} \leq \frac{1}{2}$ such that for all positive integers $n$,
    \begin{align*}
        \mathbb{P}(\tau_1 \geq n^2) &\geq \frac{\Cr{hittingtime}}{n}.
    \end{align*}
    Using the inequality $1 - x \leq e^{-x}$ (which holds for all $x \in [0, 1]$), we find that for all positive integers $n$,
    \begin{align*}
        \mathbb{P}(\tau_1 < n^2) \leq 1 - \frac{\Cr{hittingtime}}{n} \leq  e^{-\Cr{hittingtime}/n}.
    \end{align*}
    Now, recall that $\tau_m$ may be viewed as a sum of $m$ i.i.d.\ copies of $\tau_1$. Thus we have that
    \begin{align*}
        \mathbb{P}(\tau_m < n^2) \leq \mathbb{P}(\tau_1 < n^2)^m \leq e^{-\Cr{hittingtime}m/n},
    \end{align*}
    and thus
    \begin{align*}
        \mathbb{P}(\tau_m \geq n^2) &\geq 1 - e^{-\Cr{hittingtime}m/n}
    \end{align*}
    as desired.
\end{proof}

Now, fix some parameter $q \in (0, 1/2)$. Consider an infinite sequence $Y_1, Y_2, \ldots$ of i.i.d.\ random variables in $\mathbb{Z}$, where
\begin{align*}
    Y_i &= \begin{cases} 1 & \text{with probability }q \\ 0 & \text{with probability }1 - 2q \\ -1 & \text{with probability }q. \end{cases}
\end{align*}

The \dfn{lazy simple random walk on $\mathbb{Z}$ with parameter $q$} is the random process given by the infinite sequence of random variables $T_1, T_2, \thinspace \ldots$ where $T_n = Y_1 + \cdots + Y_n$. We may again define the \dfn{hitting time} of any nonzero integer $m$ as the random variable $\tau'_m$ denoting the smallest positive integer $t$ for which $T_t = m$. The following corollary will be useful in the proof of Theorem \ref{thmMainTheorem2}.

\begin{corollary}\label{corLazyLemmaRandomWalk}
    Let $\Cr{hittingtime}$ be the absolute constant used in the statement of Lemma \ref{lemmaRandomWalk}. Then for all positive integers $m$ and integers $n > 1$, 
    \begin{align*}
        \mathbb{P}(\tau'_m \geq n^2) &\geq 1 - e^{-\Cr{hittingtime}m/n}.
    \end{align*}
\end{corollary}
\begin{proof}
    Let $\tau^\ast_m$ be the number of indices $i$ between $1$ and $\tau'_m$ inclusive for which $Y_i \neq 0$. Then evidently $\tau^\ast_m \leq \tau'_m$, and moreover $\tau^\ast_m$ shares the same distribution as $\tau_m$ (as $\tau^\ast_m$ considers only the nonzero $Y_i$ when calculating the hitting time.) Hence, we have that
    \begin{align*}
        \mathbb{P}(\tau'_m \geq n^2) \geq \mathbb{P}(\tau^\ast_m &\geq n^2) \geq 1 - e^{-\Cr{hittingtime}m/n},
    \end{align*}
    as desired.
\end{proof}

\subsection{Ungarian Markov Chains on Posets of Order Ideals}

Given any lattice $L$ and element $x \in L$, let the random variable $T_L(x)$ denote the number of steps elapsed in the Ungarian Markov chain $\textbf{U}_L$ of $L$ until $x$ has transitioned into $\hat{0}$. Define $T_L := T_L(\hat{1})$ and $\E_L(x) := \mathbb{E}(T_L(x))$. By definition, $\E_L(\hat{1}) = \E_p(L)$.

Consider any poset $P$. An \dfn{order ideal} of $P$ is a subset $I \subset P$ such that for any $x, y \in P$ satisfying $x \geq y$, we have that $x \in I$ implies $y \in I$. Establish a partial order on the order ideals of $P$ by stating that $I \leq J$ if $I \subset J$; under this partial order, the order ideals of a poset $P$ form a (distributive) lattice, which we will denote as $J(P)$.

Consider any order ideal $I$ of a poset $P$. Evidently $\cov_{J(P)}(I)$ is precisely the set of order ideals $J$ that are obtained by removing a maximal element of $I$. Hence the Ungarian Markov chain $\textbf{U}_{J(P)}$ is stochastically equivalent to the Markov chain with state space $J(P)$ which on each step, given any $I \in J(P)$, deletes a randomly selected subset of the maximal elements of $I$. 

Now, fix a parameter $p$ and a poset $P$. Simulate $\textbf{U}_{J(P)}$ with parameter $p$, starting with the order ideal $I = P$. For any nonnegative integer $k$, let $I_k$ denote the order ideal which we obtain after $k$ steps. For any $x \in P$, let the random variable $G_x$ denote the number of nonnegative integers $k$ such that $x$ is a maximal element of $I_k$; observe that the random variables $G_x$ are independent and all stochastically equivalent to a geometric random variable with parameter $p$. Now, define $\MC(P)$ to be the set of maximal chains of $P$. By considering the last element $x_1 \in P$ to be removed from the order ideal, then the last element $x_2$ among those covering $x_1$ to be removed from the order ideal, and so on, we may iteratively construct a (maximal) chain $C \in \MC(P)$ satisfying $T(J(P)) = \sum_{x \in C} G_x$. Thus we always have
\begin{align*}
    T(J(P)) &= \max_{C \in \MC(P)}\sum_{x \in C} G_x.
\end{align*}
This interpretation reformulates the Ungarian Markov chain $\textbf{U}_{J(P)}$ as a variant of the well-studied \dfn{last-passage percolation with geometric weights} random process. In particular, under this formulation we readily retrieve the following corollary.
\begin{corollary}\label{corDistribSubPoset}
    For any order ideal $J' \in J(P)$ and any $t \geq 0$, we have that
    \begin{align*}
        \mathbb{P}(T_{J(P)}(J') \geq t) \leq \mathbb{P}(T(J(P)) \geq t).
    \end{align*}
\end{corollary}

\begin{proof}
    Interpret $J' \in J(P)$ as an order ideal $P'$ of $P$. Then evidently the subposet $\{x \in J(P) \colon x \leq J'\}$ of $J(P)$ is isomorphic to $J(P')$ as a poset. Now, note that every maximal chain of $P'$ is contained within a maximal chain of $P$. Thus, by simulating the Ungarian Markov chain $\textbf{U}_{J(P)}$ using the geometric random variables $G_x$, we find that
    \begin{align*}
        T_{J(P)}(J') &= T(J(P')) \\
        &= \max_{C \in MC(P')} \sum_{x \in C} G_x \\
        &\leq \max_{C \in MC(P)} \sum_{x \in C} G_x \\
        &= T(J(P)).
    \end{align*}
    From this, we readily obtain that for all $t > 0$,
    \begin{align*}
        \mathbb{P}(T_{J(P)}(J') \geq t) &\leq \mathbb{P}(T(J(P)) \geq t),
    \end{align*}
    as desired.
\end{proof}

Now, for any $n, m \in \mathbb{Z}^+$, define the poset $R_{n, m}$ to be the product of a chain of length $n-1$ and a chain of length $m-1$. Fix a parameter $p \in (0, 1)$. Then the last-passage percolation with geometric weights random process with parameter $p$ on $R_{n, m}$ can be interpreted using the \dfn{multicorner growth Totally Asymmmetric Simple Exclusion Process (TASEP)} with parameter $p$ as follows. The multicorner growth TASEP is a sequence of random Young diagrams $\{\lambda_k\}_{k \in \mathbb{Z}_{\geq 0}}$, where $\lambda_0 = \varnothing$ is the empty set and $\lambda_{k+1}$ is constructed by adding in each external corner of $\lambda_k$ with independent probability $p$. For each $k$, let $\lambda_k' \subset \lambda_k$ be the Young sub-diagram consisting only of the first $n$ rows and $m$ columns of $\lambda_k$. Again, simulate the Ungarian Markov chain $\textbf{U}_{J(R_{n,m})}$ starting at $R_{n, m} \in J(R_{n, m})$, and let the random subset $I_k \subset R_{n, m}$ denote the order ideal we obtain after $k$ steps. Then by considering the Hasse diagram of $R_{n, m}$, we may find that the complement of $I_k$ in $R_{n, m}$ may be viewed as a Young diagram, which is identically distributed (as a random Young diagram) as $\lambda_k'$. This correspondence is illustrated more clearly in Figure \ref{figureYoungDiagram}. In particular, note that the growth of the Young diagrams $\{\lambda_k\}$ outside of the first $n$ rows and $m$ columns do not influence the growth of the Young diagrams $\{\lambda_k'\}$, in the sense that the probability distribution of $\lambda_{k+1}'$ conditioned on $\lambda_k$ is the same as the distribution of $\lambda_{k+1}'$ conditioned on $\lambda_k'$. Hence $T_{J(R_{m,n})}$ is precisely the smallest index $k$ such that $\lambda_k'$ is an $n \times m$ grid of squares. For more information on the multicorner growth TASEP, see \cite[Chapters 4, 5]{romik2014subsequences}.

Now, an important ingredient towards the proof of Theorem \ref{thmMainTheorem1} will be the following convergence of the rescaled distributions of $T_{J(R_{m,n})}$, taken from \cite[Theorem 5.31]{romik2014subsequences} but originally appearing as \cite[Theorem 1.2]{johansson2000multicorner}.
\begin{theorem}\label{thmMultiCornerConvergence}
    Consider any $0 < p < 1$. For any $x, y > 0$, define
    \begin{align*}
        \Phi_p(x, y) &= \frac{1}{p}(x + y + 2\sqrt{(1-p)xy}), \\
        \eta_p(x, y) &= \frac{(1-p)^{1/6}}{p}(xy)^{-1/6}(\sqrt{x} + \sqrt{(1-p)y})^{2/3}(\sqrt{y}+\sqrt{(1-p)x})^{2/3}.
    \end{align*}
    Now, let $\{m_k\}_{k=1}^{\infty}$ and $\{n_k\}_{k=1}^{\infty}$ be sequences of positive integers, satisfying that
    \begin{align*}
        \lim_{k \rightarrow \infty} m_k = &\lim_{k \rightarrow \infty} n_k = \infty, \\
        0 < \liminf_{k \rightarrow \infty} \frac{m_k}{n_k} &< \limsup_{k \rightarrow \infty} \frac{m_k}{n_k} < \infty.
    \end{align*}
    Then we have that for all positive reals $t$,
    \begin{align*}
        \lim_{k \rightarrow \infty} \mathbb{P}\bigg(\frac{T(J(R_{m_k, n_k})) - \Phi_p(m_k, n_k)}{\eta_p(m_k, n_k)} \leq t \bigg) &= F_2(t),
    \end{align*}
    where $F_2(t)$ is the Tracy-Widom distribution.
\end{theorem}
The following asymptotic of $F_2(t)$ will also be useful to us.
\begin{theorem}\label{thmTracyWidomApprox}
    \cite[Equation 25]{baik2008tracywidom} In the $t \rightarrow +\infty$ limit, we have
    \begin{align*}
        F_2(t) = 1 - \frac{1}{32 \pi t^{3/2}}e^{-4t^{3/2}/3}(1 + O(t^{-3/2})).
    \end{align*}
\end{theorem}

\begin{figure}[ht]
    \begin{center}
\begin{tikzpicture}[scale=0.5]

\draw[black, thin] (0,0)--(2,0);
\draw[black, thin] (2,0)--(4,0);
\draw[black, thin] (4,0)--(6,0);
\draw[black, thin] (0,-2)--(2,-2);
\draw[black, thin] (2,-2)--(4,-2);
\draw[black, thin] (4,-2)--(6,-2);
\draw[black, thin] (0,-4)--(2,-4);
\draw[black, thin] (2,-4)--(4,-4);
\draw[black, thin] (4,-4)--(6,-4);
\draw[black, thin] (0,0)--(0,-2);
\draw[black, thin] (2,0)--(2,-2);
\draw[black, thin] (4,0)--(4,-2);
\draw[black, thin] (6,0)--(6,-2);
\draw[black, thin] (0,-2)--(0,-4);
\draw[black, thin] (2,-2)--(2,-4);
\draw[black, thin] (4,-2)--(4,-4);
\draw[black, thin] (6,-2)--(6,-4);

\filldraw[black] (0,0) circle (3pt);
\filldraw[black] (2,0) circle (3pt);
\filldraw[black] (4,0) circle (3pt);
\filldraw[black] (0,-2) circle (3pt);
\filldraw[black] (2,-2) circle (3pt);
\filldraw[black] (0,-4) circle (3pt);
\filldraw[blue] (6,0) circle (3pt);
\filldraw[blue] (6,-2) circle (3pt);
\filldraw[blue] (4,-2) circle (3pt);
\filldraw[blue] (6,-4) circle (3pt);
\filldraw[blue] (4,-4) circle (3pt);
\filldraw[blue] (2,-4) circle (3pt);

\filldraw[red] (-1,1) circle (0.5pt);
\filldraw[red] (1,1) circle (0.5pt);
\filldraw[red] (3,1) circle (0.5pt);
\filldraw[red] (5,1) circle (0.5pt);
\filldraw[red] (-1,-1) circle (0.5pt);
\filldraw[red] (1,-1) circle (0.5pt);
\filldraw[red] (3,-1) circle (0.5pt);
\filldraw[red] (5,-1) circle (0.5pt);
\filldraw[red] (-1,-3) circle (0.5pt);
\filldraw[red] (1,-3) circle (0.5pt);
\filldraw[red] (3,-3) circle (0.5pt);
\filldraw[red] (-1,-5) circle (0.5pt);
\filldraw[red] (1,-5) circle (0.5pt);
\filldraw[red] (-1,-7) circle (0.5pt);
\filldraw[red] (1,-7) circle (0.5pt);

\draw[red, thick] (-1,1)--(1,1);
\draw[red, thick] (1,1)--(3,1);
\draw[red, thick] (3,1)--(5,1);
\draw[red, thick] (-1,-1)--(1,-1);
\draw[red, thick] (1,-1)--(3,-1);
\draw[red, thick] (3,-1)--(5,-1);
\draw[red, thick] (-1,-3)--(1,-3);
\draw[red, thick] (1,-3)--(3,-3);
\draw[red, thick] (-1,-5)--(1,-5);
\draw[red, thick] (-1,-7)--(1,-7);

\draw[red, thick] (-1,1)--(-1,-1);
\draw[red, thick] (-1,-1)--(-1,-3);
\draw[red, thick] (-1,-3)--(-1,-5);
\draw[red, thick] (-1,-5)--(-1,-7);
\draw[red, thick] (1,1)--(1,-1);
\draw[red, thick] (1,-1)--(1,-3);
\draw[red, thick] (1,-3)--(1,-5);
\draw[red, thick] (1,-5)--(1,-7);
\draw[red, thick] (3,1)--(3,-1);
\draw[red, thick] (3,-1)--(3,-3);
\draw[red, thick] (5,1)--(5,-1);

\end{tikzpicture}
\end{center}
    \captionsetup{width=0.8\textwidth}
    \caption{An example of an order ideal of $R_{3, 4}$ and a corresponding Young diagram. The blue \textit{dots} form the order ideal, while the red \textit{boxes} display the Young diagram. Note that the Young diagram may extend beyond the $3 \times 4$ box of dots forming $R_{3, 4}$, but the growth of the Young diagram in this region does not influence the evolution of the corresponding order ideal.}
    \label{figureYoungDiagram}
\end{figure}

\section{Proof of Theorem \ref{thmMainTheorem1}}\label{sSnProof}

We will now prove a more general version of Theorem \ref{thmMainTheorem1}. Note that the ideas in the following proof are modeled after Ungar's argument in \cite{ungar1982}.

\begin{theorem}\label{thmBuffedMainTheorem1}
    Fix a parameter $p \in (0, 1)$ and a positive integer $n$. Then for any $\sigma \in S_n$,
    \begin{align*}
        \E_{S_n}(\sigma) &\leq \bigg(\frac{1+\sqrt{1-p}}{p}\bigg)n + o_p(n).
    \end{align*}
\end{theorem}

Consider any integer $1 \leq k \leq n-1$. Run the Ungarian Markov chain $\textbf{U}_{S_n}$ starting at $\sigma$, and let the random variable $t_{k, \sigma} = t_k$ denote the number of steps until $\sigma$ satisfies that for any $i \in [n]$, we have $\sigma(i) \leq k$ if and only if $i \leq k$.
Our key claim is as follows.
\begin{claim}
    For all nonnegative integers $t$, we have that
    \begin{align*}
        \mathbb{P}(t_{k, \sigma} \geq t) \leq \mathbb{P}(T(J(R_{k, n-k})) \geq t).
    \end{align*}
\end{claim}
\begin{proof}
    For any permutation $\sigma \in S_n$, let $\pi_k(\sigma)$ be the string of length $n$, where the $i^\text{th}$ character of $\pi_k(\sigma)$ is ``E'' if $\sigma(i) \leq k$ and ``N'' if $\sigma(i) > k$. Any such string corresponds to a path in $\mathbb{Z}^2$ from $(0, 0)$ to $(k, n-k)$, where the $i^\text{th}$ step in the path moves up $1$ unit if the $i^\text{th}$ character is ``N'' and right $1$ unit if the $i^\text{th}$ character is ``E''. We may further correspond each such string to an element $I_k(\sigma)$ of $J(R_{k, n - k})$ by taking the set of half-integer lattice points in $[0.5, k-0.5] \times [0.5, n-k-0.5]$ to the right of or below the path. For example, for $n = 7$ and $k = 4$, the permutation $\sigma = 1725364$ satisfies $\pi_4(\sigma) = \text{``ENENENE''}$, with $I_4(\sigma)$ as shown in Figure \ref{figureNEPath} below.

\begin{figure}[ht]
    \captionsetup{width=0.8\textwidth}
\begin{center}
\begin{tikzpicture}[scale=0.8]

\draw[gray!30, thin] (-1,-0.5)--(4,-0.5);
\draw[gray!30, thin] (-1,0.5)--(4,0.5);
\draw[gray!30, thin] (-1,1.5)--(4,1.5);
\draw[gray!30, thin] (-1,2.5)--(4,2.5);
\draw[gray!30, thin] (-0.5,-1)--(-0.5,3);
\draw[gray!30, thin] (0.5,-1)--(0.5,3);
\draw[gray!30, thin] (1.5,-1)--(1.5,3);
\draw[gray!30, thin] (2.5,-1)--(2.5,3);
\draw[gray!30, thin] (3.5,-1)--(3.5,3);

\draw[red, thick] (-0.5,-0.5)--(0.5,-0.5);
\draw[red, thick] (0.5,-0.5)--(0.5,0.5);
\draw[red, thick] (0.5,0.5)--(1.5,0.5);
\draw[red, thick] (1.5,0.5)--(1.5,1.5);
\draw[red, thick] (1.5,1.5)--(2.5,1.5);
\draw[red, thick] (2.5,1.5)--(2.5,2.5);
\draw[red, thick] (2.5,2.5)--(3.5,2.5);

\filldraw[black] (-0.5, -0.5) circle (1.5pt);
\filldraw[black] (0.5, -0.5) circle (1.5pt);
\filldraw[black] (1.5, -0.5) circle (1.5pt);
\filldraw[black] (2.5, -0.5) circle (1.5pt);
\filldraw[black] (3.5, -0.5) circle (1.5pt);
\filldraw[black] (-0.5, 0.5) circle (1.5pt);
\filldraw[black] (0.5, 0.5) circle (1.5pt);
\filldraw[black] (1.5, 0.5) circle (1.5pt);
\filldraw[black] (2.5, 0.5) circle (1.5pt);
\filldraw[black] (3.5, 0.5) circle (1.5pt);
\filldraw[black] (-0.5, 1.5) circle (1.5pt);
\filldraw[black] (0.5, 1.5) circle (1.5pt);
\filldraw[black] (1.5, 1.5) circle (1.5pt);
\filldraw[black] (2.5, 1.5) circle (1.5pt);
\filldraw[black] (3.5, 1.5) circle (1.5pt);
\filldraw[black] (-0.5, 2.5) circle (1.5pt);
\filldraw[black] (0.5, 2.5) circle (1.5pt);
\filldraw[black] (1.5, 2.5) circle (1.5pt);
\filldraw[black] (2.5, 2.5) circle (1.5pt);
\filldraw[black] (3.5, 2.5) circle (1.5pt);

\draw[red] (0,2) circle (3pt);
\draw[red] (1,2) circle (3pt);
\draw[red] (2,2) circle (3pt);
\draw[red] (0,1) circle (3pt);
\draw[red] (1,1) circle (3pt);
\draw[red] (0,0) circle (3pt);
\filldraw[red] (3,2) circle (3pt);
\filldraw[red] (3,1) circle (3pt);
\filldraw[red] (2,1) circle (3pt);
\filldraw[red] (3,0) circle (3pt);
\filldraw[red] (2,0) circle (3pt);
\filldraw[red] (1,0) circle (3pt);

\node[anchor=north east] (O) at (-0.5, -0.5) {$\textcolor{black}{(0, 0)}$};
\node[anchor=north west] (O) at (3.5, -0.5) {$\textcolor{black}{(4, 0)}$};
\node[anchor=south east] (O) at (-0.5, 2.5) {$\textcolor{black}{(0, 3)}$};

\end{tikzpicture}
\end{center}
    
    \caption{For $\sigma = 1725364 \in S_7$, we have $I_4(\sigma) = \text{``ENENENE''}$. The corresponding path in $\mathbb{Z}^2$ is shown in \textcolor{red}{red}, and the corresponding order ideal $I_4(\sigma) \subset J(R_{4, 3})$ is the subset of \textcolor{red}{red} filled in points above.}
    \label{figureNEPath}
\end{figure}
    
    Now, we will prove the strengthened claim that for all $\sigma \in S_n$ and nonnegative integers $t$,
    \begin{align*}
        \mathbb{P}(t_{k, \sigma} \geq t) &\leq \mathbb{P}(T_{J(R_{k, n-k})}(I_k(\sigma)) \geq t).
    \end{align*}

    Note that this is indeed stronger than the original claim by Corollary \ref{corDistribSubPoset}. We prove this by induction on $t$; the claim is trivial for $t = 0$, so we prove the claim for some $t$ assuming it is true for all $t' < t$. Now consider any permutation $\sigma$. Note that any pair of consecutive elements $(x, x + 1) \in [n]^2$ with $\sigma(x) > k$ and $\sigma(x+1) \leq k$ forms a descent, whilst any pair of consecutive elements $(x, x+1) \in [n]^2$ with $\sigma(x) \leq k$ and $\sigma(x+1) > k$ does not form a descent. Call all such former pairs \dfn{critical}. Now, for any subset $S'$ of the set of critical pairs $S$, let $\sigma_{S'}$ be the permutation achieved when only the pairs in $S'$ are swapped in $\sigma$. Define $I := I_k(\sigma)$ and $I_{S'} := I_{k}(\sigma_{S'})$ for any subset $S' \subset S$. Moreover for any permutation $\tau$ let $I_{\tau} := I_{k}(\tau)$, and for any permutation $\tau$ achieved by applying an Ungar move to $\sigma$, let $S_{\tau} \subset S$ denote the subset of critical pairs which were chosen to be swapped in that Ungar move. Then, by the above properties it is clear that:
    \begin{enumerate}
        \item The elements that $I$ can transition to in the Ungarian Markov chain $\textbf{U}_{J(R_{k, n-k})}$ are precisely the elements $I_{S'}$ for the subsets $S' \subset S$.
        \item Given any subset $S' \subset S$, the probability that $\sigma$ transitions in $\textbf{U}_{S_n}$ to some permutation $\tau$ with $S_\tau = S'$ is precisely the same as the probability that $I$ transitions to $I_{S'}$ in $J(R_{k, n-k})$.
        \item For any $\tau$ satisfying $S_{\tau} = S'$, we have that $\tau \leq \sigma_{S'}$ in $S_n$, hence $I_{\tau} \leq I_{S'}$ in $J(R_{k, n-k})$. Thus, by Corollary \ref{corDistribSubPoset} we have that for all $t \geq 0$, $\mathbb{P}(T_{J(R_{k, n-k})}(I_{\tau}) \geq t) \leq \mathbb{P}(T_{J(R_{k, n-k})}(I_{S'}) \geq t)$.
    \end{enumerate}
    Now, for any permutation $\sigma'$, let $Q_{\sigma'}$ be the set of permutations that can be obtained by applying a (possibly trivial) Ungar move to $\sigma'$. We now have:
    \begin{align}
        \mathbb{P}(t_{k, \sigma} \geq t) &= \sum_{S' \subset S} \sum_{\substack{\tau \in Q_{\sigma} \\S_\tau = S'}} \mathbb{P}(\sigma \rightarrow \tau) \mathbb{P}(t_{k, \tau} \geq t-1) \nonumber\\
        &\leq \sum_{S' \subset S} \sum_{\substack{\tau \in Q_{\sigma} \\S_\tau = S'}} \mathbb{P}(\sigma \rightarrow \tau) \mathbb{P}(T_{J(R_{k, n-k})}(I_\tau) \geq t-1)\tag{Inductive Hypothesis}\\
        &\leq \sum_{S' \subset S} \mathbb{P}(T_{J(R_{k, n-k})}(I_{S'}) \geq t-1) \sum_{\substack{\tau \in Q_{\sigma} \\S_\tau = S'}} \mathbb{P}(\sigma \rightarrow \tau) \tag{Observation 3}\\
        &\leq \sum_{S' \subset S} \mathbb{P}(T_{J(R_{k, n-k})}(I_{S'}) \geq t-1) \mathbb{P}(I \rightarrow I_{S'}) \tag{Observation 2}\\
        &= \mathbb{P}(T_{J(R_{k, n-k})}(I) \geq t). \tag{Observation 1}
    \end{align}
    This thus proves the inductive step and hence the claim.
\end{proof}

We now prove Theorem \ref{thmBuffedMainTheorem1}. Consider any odd $m \in \mathbb{Z}^+$, and for all integers $c$ let $a_{n, m, c} = \lceil \frac{c(n-1)}{m} \rceil$. Suppose $n > 2m$; then evidently for each $1 \leq k \leq n-1$ there exists a unique integer $1 \leq c \leq m$ such that
\begin{align*}
     a_{n, m, c-1} + 1 \leq k \leq a_{n, m, c}.
\end{align*}
Note that we always have $a_{n,m,c} + a_{n,m,m-c} \geq n-1$, so the above inequality implies $n - k \leq a_{n, m, m-c+1}$. Hence for all $k$ within the range above, we have that
\begin{align*}
        \mathbb{P}(t_k \geq t) &\leq \mathbb{P}(T(J(R_{k, n-k}))) \geq t) \\
        &\leq \mathbb{P}(T(J(R_{a_{n, m, c}, a_{n, m, m - c+1}}))) \geq t),
\end{align*}
since $J(R_{k, n-k})$ is a subposet of $J(R_{a_{n, m, c}, a_{n, m, m - c+1}})$.

Now, fix $c$, $m$, and $p$, and recall the definitions of $\Phi_p$ and $\eta_p$ from Theorem \ref{thmMultiCornerConvergence}. For $c, m, p$ fixed, we have that $\eta_p(a_{n, m, c}, a_{n, m, m - c + 1}) = \Theta_{c, m, p}(n^{1/3})$, where the implied constant possibly depends on $c$, $m$, and $p$. By taking the maximum of this constant across all choices of $c \in [m]$, we may assume the constant depends only on $m, p$. Now, we have that for all positive $x, y$,
\begin{align}
    \Phi_p(x, y) = \frac{1}{p}(x + y + 2\sqrt{(1-p)xy}) &\leq \frac{(x+y)(1 + \sqrt{1-p})}{p} \tag{AM-GM}.
\end{align}
Hence,
\begin{align*}
    \Phi_p(a_{n, m, c}, a_{n, m, m-c+1}) &\leq \bigg((n-1)\bigg(\frac{m+1}{m}\bigg)+2\bigg)\bigg(\frac{1+\sqrt{1-p}}{p}\bigg).
\end{align*}
Now run $\textbf{U}_{S_n}$ starting at $\sigma$. For each $k \in [n-1]$, let $A_k$ be the event that
\begin{align*}
    t_k > \bigg((n-1)\bigg(\frac{m+1}{m}\bigg)+2\bigg)\bigg(\frac{1+\sqrt{1-p}}{p}\bigg) + \sqrt{n}.
\end{align*}
Then, by Theorem \ref{thmMultiCornerConvergence} and Theorem \ref{thmTracyWidomApprox}, we find that for all sufficiently large $n$, there exists constants $c_{1, m, p}, c_{2, m, p}$, possibly dependent on $m, p$ such that:
\begin{align*}
    \mathbb{P}(A_k) &\leq \frac{c_{1, m, p}}{n^{1/4}}e^{-c_{2, m, p}n^{1/4}}.
\end{align*}
Hence, by the union bound,
\begin{align}\label{eq1}
    \mathbb{P}\bigg(\bigcup_{i=1}^{n-1} A_i\bigg) &\leq c_{1, m, p}n^{3/4}e^{-c_{2, m, p}n^{1/4}}.
\end{align}
Let $A = \bigcup_{i=1}^{n-1} A_i$, and let $B$ be the event that $T_{S_n}(\sigma) > n^2/p$. Observe that if for all $1 \leq k \leq n-1$, the elements less than or equal to $k$ are all left of the elements greater than $k$, then the current permutation must be the identity. Hence, $\neg A$ implies that $T_{S_n}(\sigma) \leq ((n-1)(1+1/m)+2)(1+\sqrt{1-p})/p + \sqrt{n}$, and thus for all sufficiently large $n$, $B \subset A$. To estimate the contribution of $B$ to $\E_{\sigma}(S_n)$, we now use the following lemma, first proved (with some algebraic modifications) in \cite[Section 3]{defant2023ungarian}.
\begin{lemma}\label{lemmaBBoundSn}
    \cite[Section 3]{defant2023ungarian} We have that
    \begin{align*}
        \mathbb{P}(B)\mathbb{E}(T_{S_n}(\sigma) | B) = O_p(n^2e^{-n^2p/6}).
    \end{align*}
\end{lemma}
\begin{proof}
    Recall that $S_n$ is a graded poset where every maximal chain is of length $\binom{n}{2} < \frac{n^2}{2}$. Hence, any chain from $\sigma$ to the identity permutation has less than $\frac{n^2}{2}$ elements. Each step, the probability that we transition to a strictly lower element in the lattice is at least $p$, so we evidently have that $T_{S_n}(\sigma) \leq \sum_{i=1}^{\binom{n}{2}} G_i$, where each $G_i$ is an independent geometric random variable with parameter $p$. Applying Lemma \ref{lemmaSumGeomBound} (a) with $k = \frac{n^2}{2}$ and $t = \frac{z-n^2/2p}{n/\sqrt{2p^3}}$, we find that for any integer $z > n^2/p$,
    \begin{align*}
        \mathbb{P}(T_{S_n}(\sigma) \geq z) &\leq \mathbb{P}\bigg(\sum_{i=1}^{\binom{n}{2}}G_i \geq z \bigg) \leq \exp\bigg(-\frac{(zp-n^2/2)^2}{2zp}\bigg) \leq \exp\bigg(-\frac{zp}{8}\bigg).
    \end{align*}
    Hence we find that
\begin{align*}
    \mathbb{P}(B)\mathbb{E}(T_{S_n}(\sigma) | B) &= \sum_{z = \lfloor n^2/p \rfloor + 1}^{\infty} z \mathbb{P}(T_{S_n}(\sigma) = z) \\
    &\leq \sum_{z = \lfloor n^2/p \rfloor+1}^{\infty} z\mathbb{P}(T_{S_n}(\sigma) \geq z) \\
    &\leq \sum_{z = \lfloor n^2/p \rfloor+1}^{\infty} ze^{-zp/8}\\
    &\leq \int_{n^2/p}^\infty xe^{-xp/8} dx\\
    &= O_p(n^2e^{-n^2/8}),
\end{align*}
as desired.
\end{proof} 
Applying Lemma \ref{lemmaBBoundSn} and Equation \ref{eq1}, we now find that for all sufficiently large $n$,
\begin{align*}
    \E_{S_n}(\sigma) &= \mathbb{P}(\neg A)\mathbb{E}(T_{S_n}(\sigma) | \neg A) + \mathbb{P}(A - B)\mathbb{E}(T_{S_n}(\sigma) | A - B) + \mathbb{P}(B)\mathbb{E}(T_{S_n}(\sigma) | B) \\
    &\leq \bigg(\bigg((n-1)\bigg(\frac{m+1}{m}\bigg)+2\bigg)\bigg(\frac{1+\sqrt{1-p}}{p}\bigg) + \sqrt{n}\bigg) + c_{1, m, p}n^{3/4}e^{-c_{2, m, p}n^{1/4}}\bigg(\frac{n^2}{p}\bigg) + O_p(n^2e^{-n^2/8}) \\
    &= \bigg(\frac{1 + 1\sqrt{1-p}}{p}\bigg)\bigg(\frac{n(m+1)}{m}\bigg) + o_{p,m}(n).
\end{align*}
The above statement is true for all odd $m \in \mathbb{Z}^+$, so
\begin{align*}
    \E_{S_n}(\sigma) &\leq \bigg(\frac{1 + \sqrt{1-p}}{p}\bigg)n + o_p(n),
\end{align*}
as desired.

\section{Background on Tamari Lattices}\label{sTamariBackground}

In Section \ref{sTamnProof} we will prove Theorem \ref{thmMainTheorem2}. Before we do so it will be useful to reformulate $\Tam_n$ as a poset on the ordered forests on $n$ vertices. In Subsection \ref{ssBackground312Avoiding} we will first give more background on the poset structure of $\Av_n(312)$. Then in Subsection \ref{ssBackgroundTamariTree} we will define a poset $(\mathcal{F}_{\ord}(n), \leq)$ on the ordered forests on $n$ vertices. We will prove $(\mathcal{F}_{\ord}(n), \leq) \cong \Tam_n$ by establishing a poset isomorphism between $(\mathcal{F}_{\ord}(n), \leq)$ and $\Av_n(312)$ in Theorem \ref{thmRecontextualizeTamari}. Note that much of the material of this section (notably, the isomorphism $(\mathcal{F}_{\ord}(n), \leq) \cong \Tam_n$) is based on known results in the literature regarding the Tamari lattice. For further reference on $(\mathcal{F}_{\ord}(n), \leq)$, see \cite[Section 6.2.3, Exercise 32]{donKnuth}.

\subsection{The Sublattice of 312-Avoiding Permutations}\label{ssBackground312Avoiding}

The \dfn{right weak order} on $S_n$ is the partial order where for any permutation $\sigma$ and descent $i \in \Des(\sigma)$, the permutation $\sigma' = \sigma \circ \begin{pmatrix} i & i + 1 \end{pmatrix}$ is covered by $\sigma$. It is well-known that the right weak order on $S_n$ is a lattice \cite{bjornerBrentiCoxeterGroupBook}; thus from now on we will use $S_n$ to implicitly denote this lattice. Given a permutation $\sigma \in S_n$, say a triple of indices $(i_1, i_2, i_3) \subset [n]^3$ with $i_1 < i_2 < i_3$ forms of \dfn{$312$-pattern} with respect to $\sigma$ if $\sigma(i_1) > \sigma(i_3) > \sigma(i_2)$. Call a permutation $\sigma \in S_n$ \dfn{$312$-avoiding} if no triple of indices forms a $312$-pattern with respect to $\sigma$. Let $\Av_n(312)$ denote the subset of $S_n$ consisting of the $312$-avoiding permutations, and let $\Av_n(312)$ inherit a poset structure from $S_n$. Then as discussed in Subsection \ref{ssIntroLattice}, $\Av_n(312)$ is a sublattice of $S_n$, and $\Av_n(312) \cong \Tam_n$.

Now, to further describe the poset structure on $\Av_n(312)$, we will consider the following \dfn{projection operator} $\pi_\downarrow : S_n \rightarrow \Av_n(312)$, first defined by Defant in \cite{defant2022tamari}. Given any permutation $\sigma \in S_n$, if there exist indices $i, j$ such that $i + 1 < j$ and $\sigma(i + 1) < \sigma(j) < \sigma(i)$ we can perform an \dfn{allowable swap} by swapping the entries of $\sigma(i)$ and $\sigma(i+1)$. Then $\pi_\downarrow$ sends $\sigma$ to the permutation obtained by starting at $\sigma$ and repeatedly applying allowable swaps until no more can be performed. Clearly the resulting permutation $\pi_\downarrow(\sigma)$ is in $\Av_n(312)$, and as shown in \cite[Section 3]{defant2022tamari}, the resulting permutation $\pi_\downarrow(\sigma)$ is well-defined and independent of the order in which we perform allowable swaps. In fact, Defant proved the following commutation relation between $\pi_\downarrow$ and the meet operation $\wedge$:
\begin{lemma}[Lemma 3.1, \cite{defant2022tamari}]\label{lemmaProjectionMeet}
    Consider any positive integer $n$. Given any subset $T \subset S_n$, we have that
    \begin{align*}
        \pi_\downarrow\Big(\bigwedge T \Big) = \bigwedge \{\pi_\downarrow(\sigma) : \sigma \in T\}.
    \end{align*}
\end{lemma}
Here both meet operations are taken in $S_n$, but since $\Av_n(312)$ is a sublattice of $S_n$ (as discussed in Subsection \ref{ssthmMainTheorem2Intro}), the meet operation on the right hand side can be taken in $\Av_n(312)$ as well.

Now, the following proposition characterizes how $\pi_\downarrow$ interacts with the covering relation on $S_n$.
\begin{proposition}\label{propProjectionSnCovering}
    Consider any permutation $\sigma \in \Av_n(312)$, and consider a descent $i \in \Des(\sigma)$. Let $j \in [n]$ be the minimal index such that $\sigma(k) \geq \sigma(i)$ for all $k \in [j, i]$. Let $\tau = \sigma \circ (i, i+1)$ be the permutation obtained from $\sigma$ by swapping $\sigma(i)$ and $\sigma(i+1)$. Then
    \begin{align*}
        \pi_\downarrow (\tau) &= \sigma \circ \begin{pmatrix} i+1 & i & \ldots & j+1 & j \end{pmatrix}.
    \end{align*}
\end{proposition}
\begin{proof}
    Note that if $j < i$, then we have that $\sigma(i-1) > \sigma(i)$. So the triplet $(i-1, i, i+1)$ forms a $312$-pattern in $\tau = \sigma \circ (i, i + 1)$, so we may admissably swap $i-1$ and $i$ to obtain $\sigma \circ \begin{pmatrix} i+1 & i  & i-1 \end{pmatrix}$. Continuing inductively, we find that for any $k > j$, the permutation $\sigma \circ \begin{pmatrix} i+1 & i & \ldots & k+1 & k \end{pmatrix}$ contains the $312$-pattern $(k-1, k, i+1)$, so we may admissably swap $k - 1$ and $k$. Hence we may retrieve $\sigma \circ \begin{pmatrix} i+1 & i & \ldots & j+1 & j \end{pmatrix}$ by performing admissible swaps on $\sigma$.

    Now let $\tau_0 = \sigma \circ \begin{pmatrix} i+1 & i & \ldots & j+1 & j \end{pmatrix}$, and suppose $\tau_0$ contains a $312$-pattern $(i_1, i_2, i_3)$. In the above paragraph we were repeatedly swapping the index mapping to $\sigma(i+1)$ with an adjacent index, so any $312$-pattern in $\tau_0$ must contain the index mapping to $\sigma(i+1)$, which is $j$. We also only swapped the indices within the interval $[j, i+1]$, so such a $312$-pattern must also contain another index within this interval. Thus the $312$-pattern must satisfy $i_2 = j$ and $j < i_3 \leq i+1$. By the minimality of $j$ we have that $\tau_0(j-1) = \sigma(j-1) < \sigma(i) \leq \tau(i_3)$, so we must have $i_1 < j-1$. But then we have
    \begin{align*}
        \tau_0(i_1) > \tau_0(i_3) > \tau_0(j-1), \\
        \sigma(i_1) > \tau_0(i_3) > \sigma(j-1).
    \end{align*}
    Since $\sigma^{-1}(\tau_0(i_3)) \in [j, i]$, we have that $(i_1, j-1, \sigma^{-1}(\tau_0(i_3)))$ is a $312$-pattern in $\sigma$, a contradiction. Hence $\tau_0$ is $312$-avoiding, so $\pi_\downarrow (\tau) = \tau_0$ as desired.
\end{proof}

Using Proposition \ref{propProjectionSnCovering}, we may now derive the following relationship between $\cov_{S_n}$ and $\cov_{\Av_n(312)}$.
\begin{lemma}\label{lemmaCoveringBijection}
    Given any permutation $\sigma \in \Av_n(312)$, we have that the map
    \begin{align*}
        \pi_\downarrow : \cov_{S_n}(\sigma) &\rightarrow \cov_{\Av_n(312)}(\sigma) \\
        \tau &\rightarrow \pi_\downarrow(\tau)
    \end{align*}
    is a bijection.
\end{lemma}
\begin{proof}
    We first claim $\pi_\downarrow$ is a well-defined map; i.e. that for any $\tau \in \cov_{S_n}(\sigma)$, we have that $\tau_0 := \pi_\downarrow(\tau)$ is covered by $\sigma$ in $\Av_n(312)$. To do this, observe that any $\tau \in \cov_{S_n}(\sigma)$ is of the form $\tau = \sigma \circ \begin{pmatrix} i + 1 & i \end{pmatrix}$ for some $i \in \Des(\sigma)$. Thus by Proposition \ref{propProjectionSnCovering} we have that $\tau_0 = \sigma \circ \begin{pmatrix} i+1 & i & \ldots & j+1 & j \end{pmatrix}$ for some $j$. Note that the relative ordering of the indices mapping to $[n] - \{\sigma(i+1)\}$ is the same in $\sigma$ and $\tau_0$. Thus any $\tau' \in S_n$ satisfying $\sigma \geq \tau' \geq \tau_0$ in $S_n$ must be obtainable by repeatedly performing swaps on $\sigma$, where one of the swapped indices maps to $\sigma(i+1)$. Notably this implies that the interval $[\tau_0, \sigma] \subset S_n$ is a chain, whose only $312$-avoiding elements are its endpoints. Hence we have that $\tau_0 \in \cov_{\Av_n(312)}(\sigma)$ as desired.

    Now, we first prove $\pi_\downarrow$ is surjective. Indeed given any $\tau_0 \in \cov_{\Av_n(312)}(\sigma)$, we have that since $\tau_0 < \sigma$, there exists some $\tau \in \cov_{S_n}(\sigma)$ such that $\tau_0 \leq \tau$. Now by Lemma \ref{lemmaProjectionMeet}, we have that $\pi_\downarrow(\tau) = \pi_\downarrow(\tau_0 \wedge \tau) = \tau_0 \wedge \pi_\downarrow(\tau)$, so $\pi_\downarrow(\tau) \geq \tau_0$. Since $\tau_0, \pi_\downarrow(\tau) \in \cov_{\Av_n(312)}(\sigma)$, we find that $\pi_\downarrow(\tau) = \tau_0$ as desired.

    We finally show $\pi_\downarrow$ is injective. Note that any two distinct elements $\tau, \tau'$ of $\cov_{S_n}(\sigma)$ must be of the form $\tau = \sigma \circ \begin{pmatrix} i & i+1 \end{pmatrix}$ and $\tau' = \sigma \circ \begin{pmatrix} j & j+1 \end{pmatrix}$ for some distinct $i, j \in \Des(\sigma)$. Now by Proposition \ref{propProjectionSnCovering}, $\pi_\downarrow(\tau)$ is obtainable from $\sigma$ by repeatedly swapping the index mapping to $\sigma(i+1)$ with an adjacent index, while $\pi_\downarrow(\tau')$ is obtainable from $\sigma$ by repeatedly swapping the index mapping to $\sigma(j+1)$ with an adjacent index. This clearly implies that $\pi_\downarrow(\tau) \neq \pi_\downarrow(\tau')$, as desired.
\end{proof}

By combining Lemma \ref{lemmaProjectionMeet} and Lemma \ref{lemmaCoveringBijection}, we may now reinterpret Ungar moves on $\Av_n(312)$ as follows. Given any permutation $\sigma \in \Av_n(312)$ and subset $T \subset \cov_{S_n}(\sigma)$, we have that
\begin{align*}
    \pi_\downarrow \Big(\bigwedge_{S_n(312)}(T \cup \{\sigma\})\Big) = \bigwedge_{\Av_n(312)}\{\pi_\downarrow(\tau) : \tau \in T\} \cup \{\sigma\}.
\end{align*}

Hence for any $\sigma \in \Av_n(312)$, applying a random Ungar move to $\sigma$ in $\Av_n(312)$ is equivalent to applying a random Ungar move to $\sigma$ in $S_n$, then applying the $\pi_\downarrow$ operator.

\subsection{A Graph-Theoretic Formulation of the Tamari Lattice}\label{ssBackgroundTamariTree}

In the proof of Theorem \ref{thmMainTheorem2}, it will be useful to work with a graph-theoretic interpretation of $\Tam_n$. Before we present this interpretation, we will first review some relevant prerequisite information.

A \dfn{rooted tree} is a (graph-theoretic) tree where we specify one vertex to be the ``root.'' Within a rooted tree, we may naturally assign an orientation to each of the edges by having them point ``away'' from the root. If two vertices $v, w$ are connected by an edge pointing towards $w$, then we will call $v$ the \dfn{parent} of $w$, and $w$ the \dfn{child} of $v$. If a vertex has no children, then we call it a \dfn{leaf}. An \dfn{ordered tree} is a rooted tree where we also specify an ordering of the children of each node. We may view ordered trees as trees that we may draw on a plane, such that each parent is ``higher'' than their child, and the ordering of the children of each vertex from left to right is precisely the ordering we have defined. Hence ordered trees are also called plane trees.

A \dfn{forest} is a disjoint union of trees. For any forest $F$, let $V(F)$ denote its set of vertices. An \dfn{ordered forest} is a forest equipped with an ordering on each tree, along with an ordering of the trees of the forest from left to right. A map $\varphi : V(F) \rightarrow V(F')$ is an \dfn{isomorphism of ordered forests} if it is a graph isomorphism which preserves the ordering of the roots and the natural orientations of the edges. We will denote the set of ordered forests on $n$ vertices up to isomorphism as $\mathcal{F}_{\ord}(n)$.

The \dfn{(left-to-right) preorder traversal} of an ordered forest on $n$ vertices is a labeling of the vertices from $1$ to $n$, iteratively assigned as follows.
\begin{enumerate}
    \item We assign the label $1$ to the root of the leftmost tree.
    \item If the label $i$ was just assigned to a non-leaf vertex, we next assign $i + 1$ to its leftmost child.
    \item If the label $j$ was just assigned to a leaf vertex, and $i$ is the largest index less than $j$ which has at least one still-unlabeled child, then we label the leftmost unlabeled child of $i$ with the label $j+1$.
    \item If after assigning the label $i$, all vertices in the tree containing vertex $i$ are labeled, then we next assign $i + 1$ to the root of the tree immediately right of the tree containing vertex $i$.
\end{enumerate}
The \dfn{right-to-left preorder traversal} is defined analogously as above, except the instances of ``right'' and ``left'' are swapped. We will use ``preorder traversal'' to denote the left-to-right preorder traversal, unless indicated otherwise. See Figure \ref{figureTamariPicture1} for an example of an ordered forest with the preorder traversal labeling. Clearly an isomorphism $\varphi : V(F) \rightarrow V(F')$ of ordered forests preserves the preorder traversal labeling. Note that throughout the rest of the paper, we will label the vertices of a forest using the preorder traversal labeling, and we will identify each vertex with its label.

\begin{figure}[ht]
    \captionsetup{width=0.8\textwidth}
    \begin{center}
\begin{tikzpicture}
\filldraw[black] (3.25, 3) circle (1.5pt) node[anchor=south]{$4$};
\filldraw[black] (2,2) circle (1.5pt) node[anchor=south east]{$5$};
\filldraw[black] (1,1) circle (1.5pt) node[anchor=north west]{$6$};
\filldraw[black] (0.5,0) circle (1.5pt) node[anchor=east]{$7$};
\filldraw[black] (2,1) circle (1.5pt) node[anchor=north]{$8$};
\filldraw[black] (3,1) circle (1.5pt) node[anchor=north]{$9$};
\filldraw[black] (4.5,2) circle (1.5pt) node[anchor=south west]{$10$};
\filldraw[black] (4,1) circle (1.5pt) node[anchor=north]{$11$};
\filldraw[black] (5,1) circle (1.5pt) node[anchor=north]{$12$};

\draw[black, thin] (0.5,0)--(1,1);
\draw[black, thin] (1,1)--(2,2);
\draw[black, thin] (2,2)--(3.25,3);
\draw[black, thin] (2,2)--(2,1);
\draw[black, thin] (2,2)--(3,1);
\draw[black, thin] (4.5,2)--(4,1);
\draw[black, thin] (4.5,2)--(5,1);
\draw[black, thin] (3.25,3)--(4.5,2);

\filldraw[black] (-0.5,3) circle (1.5pt) node[anchor=south]{$1$};
\filldraw[black] (-1.5,2) circle (1.5pt) node[anchor=north]{$2$};
\filldraw[black] (0.5,2) circle (1.5pt) node[anchor=north]{$3$};

\draw[black, thin] (-0.5,3)--(0.5,2);
\draw[black, thin] (-0.5,3)--(-1.5,2);

\end{tikzpicture}
\end{center}
    
    \caption{An example of an ordered forest in $\mathcal{F}_{\text{ord}}(12)$. The labels in the diagram are given by the preorder traversal.}
    \label{figureTamariPicture1}
\end{figure}

We now define a partial order $\leq$ on $\mathcal{F}_{\ord}(n)$. We will later show that the poset $(\mathcal{F}_{\ord}(n), \leq)$ is isomorphic to $\Tam_n$. To do so we first define an \dfn{operation} on vertices of an ordered forest as follows.
\begin{definition}
    Consider any ordered forest $F$, and a vertex $v$ of $F$. If $v$ has a parent, denote it with $w$, and if $v$ has any children, denote its rightmost child with $v'$. An \dfn{operation} on $v$ maps $F$ to a new ordered forest, defined as follows.
    \begin{enumerate}
        \item If $v$ is a leaf, the operation maps $F$ to itself.
        \item Assume $v$ is not a leaf (so $v'$ exists).
        \begin{enumerate}
            \item If $w$ exists, delete the edge from $v \rightarrow v'$, and draw a new edge from $w \rightarrow v'$, such that $v'$ is immediately right of $v$ as a child of $w$.
            \item If $w$ does not exist, delete the edge from $v \rightarrow v'$. Then $v'$ and its descendants form a new tree within the forest; order this tree to be immediately right of the tree containing $v$.
        \end{enumerate}
    \end{enumerate}
\end{definition}

Given any forest $F$ and vertex $v$ of $F$, we will denote the forest obtained by operating on $v \in F$ by $F[v]$. See Figure \ref{figureTamariPicture2} for some examples of the effects of various operations on a given ordered forest. Observe that the preorder traversal labeling is preserved under any operation. We now define the poset structure on $\mathcal{F}_{\ord}(n)$ as follows.

\begin{figure}[ht]
    \captionsetup{width=0.8\textwidth}
\begin{tabular}{ | m{5cm} | m{5cm}| m{5cm}| } 
\hline
\begin{center}
\begin{tikzpicture}[scale=0.8]

\filldraw[black] (3.25,3) circle (1.5pt) node[anchor=south]{$1$};
\filldraw[black] (2,2) circle (1.5pt) node[anchor=south east]{$2$};
\filldraw[black] (1,1) circle (1.5pt) node[anchor=south east]{$3$};
\filldraw[black] (0.5,0) circle (1.5pt) node[anchor=east]{$4$};
\filldraw[black] (2,1) circle (1.5pt) node[anchor=north]{$5$};
\filldraw[black] (3,1) circle (1.5pt) node[anchor=north]{$6$};
\filldraw[black] (4.5,2) circle (1.5pt) node[anchor=south west]{$7$};
\filldraw[black] (4,1) circle (1.5pt) node[anchor=north]{$8$};
\filldraw[black] (5,1) circle (1.5pt) node[anchor=north]{$9$};

\draw[black, thin] (0.5,0)--(1,1);
\draw[black, thin] (1,1)--(2,2);
\draw[black, thin] (2,2)--(3.25,3);
\draw[black, thin] (2,2)--(2,1);
\draw[black, thin] (2,2)--(3,1);
\draw[black, thin] (4.5,2)--(4,1);
\draw[black, thin] (4.5,2)--(5,1);
\draw[black, thin] (3.25,3)--(4.5,2);

\end{tikzpicture}
\end{center}
 & 
 \begin{center}

\begin{tikzpicture}[scale=0.8]

\filldraw[black] (3.25,3) circle (1.5pt) node[anchor=south]{$1$};
\filldraw[black] (2,2) circle (1.5pt) node[anchor=south east]{$2$};
\filldraw[black] (1.5,1) circle (1.5pt) node[anchor=south east]{$3$};
\filldraw[black] (1,0) circle (1.5pt) node[anchor=east]{$4$};
\filldraw[black] (2.5,1) circle (1.5pt) node[anchor=north]{$5$};
\filldraw[black] (3.25,2) circle (1.5pt) node[anchor=north]{$6$};
\filldraw[black] (4.5,2) circle (1.5pt) node[anchor=south west]{$7$};
\filldraw[black] (4,1) circle (1.5pt) node[anchor=north]{$8$};
\filldraw[black] (5,1) circle (1.5pt) node[anchor=north]{$9$};

\draw[black, thin] (1,0)--(1.5,1);
\draw[black, thin] (1.5,1)--(2,2);
\draw[black, thin] (2,2)--(3.25,3);
\draw[black, thin] (2,2)--(2.5,1);
\draw[black, thin] (3.25,3)--(3.25,2);
\draw[black, thin] (4.5,2)--(4,1);
\draw[black, thin] (4.5,2)--(5,1);
\draw[black, thin] (3.25,3)--(4.5,2);

\end{tikzpicture}
 \end{center}
& \begin{center}

\begin{tikzpicture}[scale=0.8]

\filldraw[black] (2,3) circle (1.5pt) node[anchor=south]{$1$};
\filldraw[black] (2,2) circle (1.5pt) node[anchor=south east]{$2$};
\filldraw[black] (1,1) circle (1.5pt) node[anchor=south east]{$3$};
\filldraw[black] (0.5,0) circle (1.5pt) node[anchor=east]{$4$};
\filldraw[black] (2,1) circle (1.5pt) node[anchor=north]{$5$};
\filldraw[black] (3,1) circle (1.5pt) node[anchor=north]{$6$};
\filldraw[black] (4.5,3) circle (1.5pt) node[anchor=south west]{$7$};
\filldraw[black] (4,2) circle (1.5pt) node[anchor=north]{$8$};
\filldraw[black] (5,2) circle (1.5pt) node[anchor=north]{$9$};

\draw[black, thin] (0.5,0)--(1,1);
\draw[black, thin] (1,1)--(2,2);
\draw[black, thin] (2,2)--(2,3);
\draw[black, thin] (2,2)--(2,1);
\draw[black, thin] (2,2)--(3,1);
\draw[black, thin] (4.5,3)--(4,2);
\draw[black, thin] (4.5,3)--(5,2);

\end{tikzpicture}
\end{center}
\\ 
\hline
\end{tabular}
    
    \caption{The tree on the left is labelled via the preorder traversal. The middle tree is obtained by operating on vertex $2$ of the left tree, and the right tree is obtained by operating on vertex $1$ of the left tree. Note that the preorder traversal labeling is preserved in both operations.}
    \label{figureTamariPicture2}
\end{figure}

\begin{definition}\label{defForestPoset}
    Consider any two ordered forests $F, F' \in \mathcal{F}_{\ord}(n)$.
    \begin{itemize}
        \item Say that $F' \lessdot F$ if we may obtain $F'$ by starting with $F$ and operating on any one of its non-leaf vertices.
        \item Say that $F' \leq F$ if there exists a finite sequence of forests $F' = F_1, F_2, \ldots F_{k-1}, F_k = F$ of length at least one, such that $F_i \in \mathcal{F}_{\ord}(n)$ for all $i \in [k]$, and $F_i \lessdot F_{i+1}$ for all $i \in [k-1]$.
    \end{itemize}
\end{definition}

\begin{theorem}
    As defined above, $\leq$ is indeed a valid partial order on $\mathcal{F}_{\ord}(n)$. Moreover, $\lessdot$ is precisely the covering relation corresponding to $\leq$.
\end{theorem}
\begin{proof}
    Consider an ordered forest $F$. Label its vertices via the preorder traversal on $F$. Given any vertex $i \in V(F)$, let $d(i)$ denote the number of descendants of $i$. Note that operating on a non-leaf vertex $i$ decreases $d(i)$ but does not change $d(j)$ for any other vertex $j \in F$. This implies that $\leq$ is antisymmetric, since if there exists a sequence of ordered forests $F_1 \gtrdot F_2 \gtrdot \ldots \gtrdot F_m$ where $F_m = F_1$, then the sums $\sum_{i \in V(F_k)} d(i)$ must be strictly decreasing, so we must have $m = 1$. By definition $\leq$ satisfies reflexivity and transitivity, so $\leq$ is indeed a valid partial order.

    We now show that $\lessdot$ is the covering relation corresponding to $\leq$. It suffices to show that given any sequence of ordered forests on $n$ vertices $F_1, \ldots F_m$ satisfying $F_1 \gtrdot F_2 \gtrdot \ldots \gtrdot F_m$ and $F_1 \gtrdot F_m$, we have that $m = 2$. Again, since operating on a vertex $i \in V(F)$ decreases $d(i)$ but not $d(j)$ for any $j \neq i$, we have that for all $k \in [m-1]$, $F_{k+1}$ must be obtained from $F_k$ by operating on the same vertex $i$ (according to the preorder traversal labelling). But there exists a unique forest which one may obtain by operating on $i \in V(F_1)$, so we must have $F_m = F_2$, hence $m = 2$ as desired.
\end{proof}

We now prove that the poset $(\mathcal{F}_{\ord}(n), \leq)$ is isomorphic to $\Tam_n$.

\begin{theorem}\label{thmRecontextualizeTamari}
    The poset $(\mathcal{F}_{\ord}(n), \leq)$ is isomorphic to $\Tam_n$.
\end{theorem}

\begin{proof}[Proof of Theorem \ref{thmRecontextualizeTamari}]
    Recall that the sublattice $\Av_n(312) \subset S_n$ of $312$-avoiding permutations is isomorphic to $\Tam_n$ as a lattice. It thus suffices to construct a poset isomorphism
    \begin{align*}
        \Phi : \Av_n(312) \rightarrow (\mathcal{F}_{\ord}(n), \leq).
    \end{align*}
    We will construct $\Phi$ as follows. Given a $312$-avoiding permutation $\sigma$, consider its plot; that is, the set of points $p_i = (i, \sigma(i))$ in $\mathbb{R}^2$. Now draw a graph with vertices $\{p_i\}$, such that given any two integers $i, j \in [n]$ with $i < j$, the points $p_i$ and $p_j$ are connected by a directed edge $p_j \rightarrow p_i$ if
\begin{itemize}
    \item $\sigma(i) > \sigma(j)$, and
    \item no points in the plot (other than $p_i, p_j$ themselves) are contained within the rectangle $[i, j] \times [\sigma(i), \sigma(j)]$.
\end{itemize}
Observe that since $\sigma$ is $312$-avoiding, the in-degree of any vertex in this graph is at most $1$. Since this graph clearly cannot have a directed cycle, it cannot have an undirected cycle either. So this graph is a forest. Now, let $\Phi(\sigma)$ be a plane forest with vertices denoted $q_1, \ldots, q_n$, satisfying the following properties:
\begin{itemize}
    \item vertices $q_i, q_j$ are connected by an edge if and only if $p_i, p_j$ are connected by a (directed) edge;
    \item for any two integers $i, j \in [n]$ satisfying $i < j$, if the vertices $q_i, q_j$ are connected by an edge, then $q_i$ lies vertically underneath $q_j$ in the plane;
    \item for any three integers $i, j, k \in [n]$ satisfying $i < j < k$, if the pairs of vertices $(q_i, q_k)$ and $(q_j, q_k)$ are connected by edges, then the vertex $q_i$ lies to the left of the vertex $q_j$.
    \item for any two integers $i, j \in [n]$ satisfying $i < j$, if $q_i$ and $q_j$ are the roots of the trees they are in (i.e. the rectangles $[i, n] \times [1, \sigma(i)]$ and $[j, n] \times [1, \sigma(j)]$ contain only the points $p_i$ and $p_j$ respectively), then the tree rooted at $q_i$ is left of the tree rooted at $q_j$.
\end{itemize}
Since the graph formed by the vertices $p_i$ was a forest, the graph $\Phi(\sigma)$ must be an ordered forest. Evidently such an ordered forest is unique (up to an isomorphism on ordered forests), so $\Phi$ is a well-defined map. As an example, Figure \ref{figureTamariPermutation1} demonstrates how $\Phi$ acts on the permutation $\sigma = 342651 \in S_6$.

\begin{figure}[ht]
    \captionsetup{width=0.95\textwidth}

{\centering
\begin{tabular}{ | m{6cm} | m{6cm}| } 
\hline
\begin{center}
\begin{tikzpicture}[scale=0.8]

\draw[gray, thick] (0.5,0.5)--(6.5,0.5);
\draw[gray!30, thin] (0.5,1.5)--(6.5,1.5);
\draw[gray!30, thin] (0.5,2.5)--(6.5,2.5);
\draw[gray!30, thin] (0.5,3.5)--(6.5,3.5);
\draw[gray!30, thin] (0.5,4.5)--(6.5,4.5);
\draw[gray!30, thin] (0.5,5.5)--(6.5,5.5);
\draw[gray, thick] (0.5,6.5)--(6.5,6.5);
\draw[gray, thick] (0.5,0.5)--(0.5,6.5);
\draw[gray!30, thin] (1.5,0.5)--(1.5,6.5);
\draw[gray!30, thin] (2.5,0.5)--(2.5,6.5);
\draw[gray!30, thin] (3.5,0.5)--(3.5,6.5);
\draw[gray!30, thin] (4.5,0.5)--(4.5,6.5);
\draw[gray!30, thin] (5.5,0.5)--(5.5,6.5);
\draw[gray, thick] (6.5,0.5)--(6.5,6.5);

\filldraw[black] (1,3) circle (1.5pt) node[anchor=south]{$p_1$};
\filldraw[black] (2,4) circle (1.5pt) node[anchor=south east]{$p_2$};
\filldraw[black] (3,2) circle (1.5pt) node[anchor=north east]{$p_3$};
\filldraw[black] (4,6) circle (1.5pt) node[anchor=east]{$p_4$};
\filldraw[black] (5,5) circle (1.5pt) node[anchor=south west]{$p_5$};
\filldraw[black] (6,1) circle (1.5pt) node[anchor=north]{$p_6$};

\node[anchor=north] (O) at (1, 0.5) {$1$};
\node[anchor=north] (O) at (2, 0.5) {$2$};
\node[anchor=north] (O) at (3, 0.5) {$3$};
\node[anchor=north] (O) at (4, 0.5) {$4$};
\node[anchor=north] (O) at (5, 0.5) {$5$};
\node[anchor=north] (O) at (6, 0.5) {$6$};

\node[anchor=east] (O) at (0.5, 1) {$1$};
\node[anchor=east] (O) at (0.5, 2) {$2$};
\node[anchor=east] (O) at (0.5, 3) {$3$};
\node[anchor=east] (O) at (0.5, 4) {$4$};
\node[anchor=east] (O) at (0.5, 5) {$5$};
\node[anchor=east] (O) at (0.5, 6) {$6$};

\draw[<-, black, thin] (1,3)--(3,2);
\draw[<-, black, thin] (2,4)--(3,2);
\draw[<-, black, thin] (3,2)--(6,1);
\draw[<-, black, thin] (4,6)--(5,5);
\draw[<-, black, thin] (5,5)--(6,1);

\end{tikzpicture}
\end{center}
 & 
 \begin{center}

\begin{tikzpicture}[scale=0.9]

\filldraw[black] (0,0) circle (1.5pt) node[anchor=east]{$q_1$};
\filldraw[black] (2,0) circle (1.5pt) node[anchor=west]{$q_2$};
\filldraw[black] (1,2) circle (1.5pt) node[anchor=south east]{$q_3$};
\filldraw[black] (4,0) circle (1.5pt) node[anchor=west]{$q_4$};
\filldraw[black] (4,2) circle (1.5pt) node[anchor=west]{$q_5$};
\filldraw[black] (2,4) circle (1.5pt) node[anchor=south]{$q_6$};

\draw[black, thin] (4,0)--(4,2);
\draw[black, thin] (4,2)--(2,4);
\draw[black, thin] (2,0)--(1,2);
\draw[black, thin] (0,0)--(1,2);
\draw[black, thin] (1,2)--(2,4);

\end{tikzpicture}
 \end{center}
\\ 
\hline
\end{tabular}\par
}
    
    \caption{On the left is the plot of the permutation $\sigma = 342651 \in S_6$, with directed edges drawn between the vertices of the plot as described above. On the right is the forest $\Phi(\sigma)$.}
    \label{figureTamariPermutation1}
\end{figure}

Now, we can establish an inverse correspondence $\varphi : (\mathcal{F}_{\ord}(n), \leq) \rightarrow \Av_n(312)$ as follows. Consider an ordered forest $F \in (\mathcal{F}_{\ord}(n), \leq)$. For any vertex $v \in F$, let $l(v)$ be its left-to-right preorder traversal label, and let $r(v)$ be its right-to-left preorder traversal label. Then let $\varphi(F)$ be the permutation $\sigma$ satisfying that $\sigma(n+1-r(v)) = l(v)$ for all $v \in V$. Since the ordered sets $\{l(v)\}, \{r(v)\}$ are both permutations of $[n]$, we have that $\varphi(F)$ is indeed a permutation. Also note that for any two vertices $v, w \in V(F)$, if $l(v) > l(w)$ and $r(v) > r(w)$, then $w$ must be an ancestor of $v$. Thus there do not exist vertices $v_1, v_2, v_3 \in V(F)$ such that $r(v_1) > r(v_2) > r(v_3)$ but $l(v_1) > l(v_3) > l(v_2)$. This is equivalent to $\varphi(F)$ being $312$-avoiding, so $\varphi$ indeed forms a well-defined map from $(\mathcal{F}_{\ord}(n), \leq)$ to $\Av_n(312)$. Now, one can readily check that $\varphi$ and $\Phi$ are inverses to each other. Thus $\Phi$ is a bijection, as desired. As an example, for the permutation $\sigma = 342651 \in S_6$, Figure \ref{figureTamariPermutation2} shows how $\varphi(\Phi(\sigma)) = \sigma$.

\begin{figure}[ht]
    \captionsetup{width=0.95\textwidth}

{\centering
\begin{tabular}{ | m{6cm} | m{6cm}| } 
\hline
 \begin{center}

\begin{tikzpicture}[scale=0.9]

\filldraw[black] (0,0) circle (1.5pt) 
node[anchor=west]{$\textcolor{red}{6}$}
node[anchor=east]{$\textcolor{blue}{3}$};
\filldraw[black] (2,0) circle (1.5pt)
node[anchor=west]{$\textcolor{red}{5}$}
node[anchor=east]{$\textcolor{blue}{4}$};
\filldraw[black] (1,2) circle (1.5pt)
node[anchor=west]{$\textcolor{red}{4}$}
node[anchor=east]{$\textcolor{blue}{2}$};
\filldraw[black] (4,0) circle (1.5pt)
node[anchor=west]{$\textcolor{red}{3}$}
node[anchor=east]{$\textcolor{blue}{6}$};
\filldraw[black] (4,2) circle (1.5pt)
node[anchor=west]{$\textcolor{red}{2}$}
node[anchor=east]{$\textcolor{blue}{5}$};
\filldraw[black] (2,4) circle (1.5pt)
node[anchor=west]{$\textcolor{red}{1}$}
node[anchor=east]{$\textcolor{blue}{1}$};

\draw[black, thin] (4,0)--(4,2);
\draw[black, thin] (4,2)--(2,4);
\draw[black, thin] (2,0)--(1,2);
\draw[black, thin] (0,0)--(1,2);
\draw[black, thin] (1,2)--(2,4);

\end{tikzpicture}
 \end{center}
 &
\begin{center}
\begin{tikzpicture}[scale=0.9]

\draw[gray, thick] (0.5,0.5)--(6.5,0.5);
\draw[gray!30, thin] (0.5,1.5)--(6.5,1.5);
\draw[gray!30, thin] (0.5,2.5)--(6.5,2.5);
\draw[gray!30, thin] (0.5,3.5)--(6.5,3.5);
\draw[gray!30, thin] (0.5,4.5)--(6.5,4.5);
\draw[gray!30, thin] (0.5,5.5)--(6.5,5.5);
\draw[gray, thick] (0.5,6.5)--(6.5,6.5);
\draw[gray, thick] (0.5,0.5)--(0.5,6.5);
\draw[gray!30, thin] (1.5,0.5)--(1.5,6.5);
\draw[gray!30, thin] (2.5,0.5)--(2.5,6.5);
\draw[gray!30, thin] (3.5,0.5)--(3.5,6.5);
\draw[gray!30, thin] (4.5,0.5)--(4.5,6.5);
\draw[gray!30, thin] (5.5,0.5)--(5.5,6.5);
\draw[gray, thick] (6.5,0.5)--(6.5,6.5);

\filldraw[black] (1,3) circle (3pt);
\filldraw[black] (2,4) circle (3pt);
\filldraw[black] (3,2) circle (3pt);
\filldraw[black] (4,6) circle (3pt);
\filldraw[black] (5,5) circle (3pt);
\filldraw[black] (6,1) circle (3pt);

\node[anchor=north] (O) at (1, 0.5) {$\textcolor{black}{1}$};
\node[anchor=north] (O) at (2, 0.5) {$\textcolor{black}{2}$};
\node[anchor=north] (O) at (3, 0.5) {$\textcolor{black}{3}$};
\node[anchor=north] (O) at (4, 0.5) {$\textcolor{black}{4}$};
\node[anchor=north] (O) at (5, 0.5) {$\textcolor{black}{5}$};
\node[anchor=north] (O) at (6, 0.5) {$\textcolor{black}{6}$};

\node[anchor=north] (O) at (1, 0) {$(7 - \textcolor{red}{6})$};
\node[anchor=north] (O) at (2, -0.5) {$(7 - \textcolor{red}{5})$};
\node[anchor=north] (O) at (3, 0) {$(7 - \textcolor{red}{4})$};
\node[anchor=north] (O) at (4, -0.5) {$(7 - \textcolor{red}{3})$};
\node[anchor=north] (O) at (5, 0) {$(7 - \textcolor{red}{2})$};
\node[anchor=north] (O) at (6, -0.5) {$(7 - \textcolor{red}{1})$};

\node[anchor=east] (O) at (0.5, 1) {$\textcolor{blue}{1}$};
\node[anchor=east] (O) at (0.5, 2) {$\textcolor{blue}{2}$};
\node[anchor=east] (O) at (0.5, 3) {$\textcolor{blue}{3}$};
\node[anchor=east] (O) at (0.5, 4) {$\textcolor{blue}{4}$};
\node[anchor=east] (O) at (0.5, 5) {$\textcolor{blue}{5}$};
\node[anchor=east] (O) at (0.5, 6) {$\textcolor{blue}{6}$};

\draw[gray, dashed, thin] (1,3)--(3,2);
\draw[gray, dashed, thin] (2,4)--(3,2);
\draw[gray, dashed, thin] (3,2)--(6,1);
\draw[gray, dashed, thin] (4,6)--(5,5);
\draw[gray, dashed, thin] (5,5)--(6,1);

\end{tikzpicture}
\end{center}
\\ 
\hline
\end{tabular}\par
}
    
    \caption{On the left is the forest $F = \Phi(342651) \in \mathcal{F}_{\text{ord}}$ from Figure \ref{figureTamariPermutation1}. For each $v \in V(F)$, the left \textcolor{blue}{blue} label is $l(v)$, while the right \textcolor{red}{red} label is $r(v)$. On the right is the permutation $\varphi(\Phi(342651)) = 342651$. To emphasize that $\Phi$ and $\varphi$ are inverse bijections, dashed lines are drawn on the right which indicate how the pairs of vertices in $V(F)$ are mapped.}
    \label{figureTamariPermutation2}
\end{figure}

Now, consider any $312$-avoiding permutation $\sigma$, and any $i \in [n]$. Let $\Phi(\sigma) = F$, and let $i$ correspond to the vertex $q_i \in V(F)$. Observe that $i-1 \in \Des(\sigma)$ if and only if $q_i$ is a non-leaf vertex. Moreover, for any $i \in [n]$ with $i-1 \in \Des(\sigma)$, let $\tau_i := \sigma \circ \begin{pmatrix} i-1 & i \end{pmatrix}$, and let $\sigma[i] := \pi_\downarrow(\tau_i)$. Then by Proposition \ref{propProjectionSnCovering}, we have that any $i$ with $i-1 \in \Des(\sigma)$ satisfies $\Phi(\sigma[i]) = \Phi(\sigma)[q_i]$. Hence for any $\sigma \in \Av_n(312)$, we have that $\Phi$ bijects $\cov_{\Av_n(312)}(\sigma)$ and $\cov_{(\mathcal{F}_{\ord}(n), \leq)}(\Phi(\sigma))$. So $\Phi$ is indeed a poset isomorphism, as desired.
\end{proof}

From now on we will identify $\Tam_n$ with $(\mathcal{F}_{\ord}(n), \leq)$. By combining the identity $\Phi(\sigma[i]) = \Phi(\sigma)[q_i]$ from the proof of Theorem \ref{thmRecontextualizeTamari}, the characterization of Ungar moves on $\Av_n(312)$ in Subsection \ref{ssBackground312Avoiding}, and the characterization of Ungar moves on $S_n$ in Section \ref{sIntro}, we may characterize the Ungar moves on $(\mathcal{F}_{\ord}(n), \leq)$ as follows.

\begin{corollary}\label{corTamnTreeUngarMoves}
    Consider an ordered forest $F \in (\mathcal{F}_{\ord}(n), \leq)$, and identify the vertices of $F$ with their preorder traversal labels. For any vertices $i_1, i_2, \ldots i_m \in V(F)$ whose labels satisfy $i_1 < i_2 < \ldots < i_m$, we have that
    \begin{align*}
        F \wedge \bigwedge_{k=1}^m F[i_k] = (\ldots((F[i_1])[i_2]) \ldots [i_m])
    \end{align*}
    where the right hand side denotes the forest obtained by operating on $i_1$ in $F$, then $i_2$ in $F[i_1]$, and so on.

    Notably, fix a parameter $p \in (0, 1]$, and consider an ordered forest $F \in (\mathcal{F}_{\ord}(n), \leq)$. Pick each vertex of $V(F)$ with independent probability $p$, and let $\{i_1, \ldots i_m\}$ be the list of picked vertices ordered from smallest to largest label. Then a \textit{random Ungar move} on $F$ sends
    \begin{align*}
        F \mapsto (\ldots((F[i_1])[i_2]) \ldots [i_m]).
    \end{align*}
\end{corollary}

Throughout the rest of the paper, we will write that during a given Ungar move, a vertex $v$ \dfn{receives} an operation (or is \dfn{operated on}) if an Ungar move is applied to the given forest and $v$ is one of the vertices selected in said move. Also note that for any ordered forest $F$ and leaf vertex $v \in F$, we have $F[v] = F$. Hence even though every vertex of $F$ may be picked in the formulation of the random Ungar moves on $(\mathcal{F}_{\ord}(n), \leq)$ in Corollary \ref{corTamnTreeUngarMoves}, only the operations on the non-leaf vertices affect the resulting forest.

By Theorem \ref{thmRecontextualizeTamari}, it is clear that the ordered forest corresponding to the maximal element $\hat{1} \in \Tam_n$ is the tree which consists of a path on $n$ vertices, while the ordered forest corresponding to the minimal element $\hat{0} \in \Tam_n$ is the forest which consists of $n$ vertices and no edges. Now, throughout the rest of the paper, we will use the following properties of the aforementioned operations and preorder traversal labelings:
\begin{proposition}\label{propTreeProperties}
Identify the labels of an ordered forest with their preorder traversal labeling. Then the preorder traversal labeling satisfies the following properties.
    \begin{enumerate}[(a)]
    \item The labels of a vertex and its descendants form a contiguous subset of $[n]$.
    \item For any integer $m \in [n]$ and ordered forest $F \in \Tam_n$, let $F\langle m, n \rangle$ be the induced subgraph of $F$ formed by the vertices with indices in $[m, n]$. Given an ordered forest $F \in \Tam_n$, consider the ordered forest $G \in \Tam_{n-m+1}$ isomorphic to $F \langle m, n \rangle$, and consider any other ordered forest $G' \in \Tam_{n-m+1}$. Then the probability that applying a random Ungar move to $F$ produces a forest $F' \in \Tam_n$ satisfying $F'\langle m, n \rangle \cong G'$ is equal to the probability that applying a random Ungar move to $G$ produces $G'$.
    \item Consider any two vertices indexed $k, l$ where $k < l$, and consider an infinite sequence of Ungar moves applied to $\hat{1} \in \Tam_n$. For every vertex $i$, let $h_i$ denote the smallest integer $t$ in which $i$ receives an operation on the $t^\text{th}$ step. Suppose that:
    \begin{align}\label{eqProp49Condition}
        h_k > h_l \qquad \text{and} \qquad \forall i \in [k+1, l-1], h_l \geq h_i.
    \end{align}
    Then $l$ will become a child of $k$ after $h_l$ moves.
\end{enumerate}
\end{proposition}

\begin{proof}[Proof of Proposition \ref{propTreeProperties}]
    Property $(a)$ follows by the definition of the preorder traversal labeling.

    To prove property $(b)$, observe that for any $m \in [n]$ and $i < m$, we have that $F[i]\langle m, n \rangle \cong F\langle m, n \rangle$; in other words, operating on vertex $i$ does not affect $F\langle m, n \rangle$. Thus property (b) follows immediately from combining this observation with the description of random Ungar moves on $F$ given in Corollary \ref{corTamnTreeUngarMoves}.

    We finally prove property $(c)$. Let $F_t$ be the forest obtained after applying the first $t$ Ungar moves to $\hat{1}$. For any ordered forest $F \in \Tam_n$ and pair of integers $i, j \in [n]$ with $i < j$, define the \dfn{rightmost skeleton of $[i, j]$ in $F$,} $R_{i, j}(F)$, to be the maximal subsequence $a_1, a_2, \ldots a_k$ of $i, i+1, \ldots j$, such that:
    \begin{itemize}
        \item we have $a_1 = i$, and;
        \item for any $l \geq 2$, we have that $a_l$ is the rightmost vertex of $a_{l-1}$
    \end{itemize}
    Now, the condition \eqref{eqProp49Condition} guarantees that for $t < h_l$, the rightmost skeleton $R_{i, j}(F_t)$ is only changed when a vertex $i \in [k + 1, l]$ is operated on. Specifically, we have that any vertex $i \in [k + 1, l - 1]$ satisfies $i \in R_{k, l}(F_t)$ for $t \leq h_i - 1$, and $i \not \in R_{k, l}(F_t)$ for $h_i \leq t \leq h_l$. Let $F'$ be the forest obtained from $F_{h_l - 1}$ by only operating on the vertices chosen in the $h_l^\text{th}$ Ungar move with index $< l$. Then the above implies $R_{k, l}(F') = \{k, l\}$, so $l$ must be a child of $k$ in $F'$. Since $F_{h_l}$ may be obtained from $F'$ by operating on vertices with index $\geq l$, we find that $l$ is also a child of $k$ in $F_{h_l}$, as desired.
\end{proof}

\section{Proof of Theorem \ref{thmMainTheorem2}.}\label{sTamnProof}

The proof of Theorem \ref{thmMainTheorem2} is rather technical. Hence we will first outline the proof in Subsection \ref{ssthmMainTheorem2Outline}, then return to the proof of Theorem \ref{thmMainTheorem2} in Subsection \ref{ssthmMainTheorem2Intro}. Note that we will define many random variables in Subsection \ref{ssthmMainTheorem2Outline}, but these variables will not necessarily be used outside of said Subsection. We will also use the standard abbreviation ``w.h.p.'' to denote ``with high probability''. We will use this term loosely and without proof in Subsection \ref{ssthmMainTheorem2Outline} since it is only a proof outline; all the results will be made rigorous in the later parts of Section \ref{sTamnProof}.

\subsection{Proof Outline for Theorem \ref{thmMainTheorem2}}\label{ssthmMainTheorem2Outline}

The strategy for the proof of Theorem \ref{thmMainTheorem2} is as follows. Consider an ordered tree $T \in \Tam_n$, and run the Ungarian Markov chain $\textbf{U}_{\Tam_n}$ starting at $T$. Let the random variable $T_{n, T}$ denote the number of steps until the vertex $1$ has no more children. Now, our main claim (Lemma \ref{lemmaMain}) in the proof of Theorem \ref{thmMainTheorem2} may roughly be stated as follows: if $T$ is a tree with many children, each with many children of their own, then $T_{n, T}$ is at least $n^{1-o(1)}$ with high probability.

Given a tree $T \in \Tam_n$ rooted at the vertex $1$, we may produce a lower bound for $T_{n, T}$ by piecing together the following two heuristics.
\begin{heuristic}\label{heuristic1}
    Consider an integer $k > 0$. Let $i$ be the $k+1^\text{th}$ rightmost child of $1$, let $j$ be the $k^\text{th}$ rightmost child of $1$, and suppose $i$ has at least $k$ children. Simulate the Ungarian Markov chain $\textbf{U}_{\Tam_n}$ starting at $T$. Let the random variable $\tau_0$ denote the number of steps until $j$ is no longer connected to $1$, and let $\tau$ denote the number of steps until $i$ is no longer connected to $1$. Also let $m$ denote the number of times the vertex $i$ has been operated on after $\tau_0$ steps. Since $1$ must have been operated on at least $k$ times after $\tau_0$ steps, we have w.h.p. that $m = \Omega_p(k)$. Hence we have that $\tau - \tau_0 \geq \min(m, k)$, so w.h.p. we have that $\tau - \tau_0 = \Omega_p(k)$. See Figure \ref{figureHeuristic1} for a visual depiction of how $T$ changes after $\tau_0$ and $\tau$ Ungar moves.
\end{heuristic}
\begin{figure}[ht]
    \captionsetup{width=0.9\textwidth}
    \begin{center}
\begin{tikzpicture}[scale=0.8]
\filldraw[black] (-6, 0) circle (1.5pt) node[anchor=south]{$1$};
\filldraw[black] (-7.5, -1) circle (1.5pt) node[anchor=south]{$i$};
\filldraw[black] (-6.75, -1) circle (1.5pt) node[anchor=north]{$j$};
\filldraw[black] (-6, -1) circle (1.5pt);
\filldraw[black] (-4.5, -1) circle (1.5pt);
\filldraw[red] (-8.25, -2) circle (1.5pt);
\filldraw[red] (-6.75, -2) circle (1.5pt);

\node (Dot1) at (-5.25, -1.35) {$\underbrace{\thickspace\thickspace\thickspace \ldots \ldots \thickspace\thickspace\thickspace}_{k-1 \text{ vertices}}$};
\node (Dot2) at (-7.5, -2.35) {$\underbrace{\thickspace\thickspace\thickspace \ldots \ldots \thickspace\thickspace\thickspace}_{\geq k \text{ vertices}}$};

\draw[black, thin] (-6,0)--(-7.5,-1);
\draw[black, thin] (-6,0)--(-6.75,-1);
\draw[black, thin] (-6,0)--(-6,-1);
\draw[black, thin] (-6,0)--(-4.5,-1);
\draw[black, thin] (-7.5, -1)--(-8.25, -2);
\draw[black, thin] (-7.5, -1)--(-6.75, -2);

\node (Arrow) at (-3, -1) {$\Longrightarrow$};
\node (Label) at (-3, -0.7) {$\tau_0 \text{ moves}$};

\filldraw[black] (-1, 0) circle (1.5pt) node[anchor=south]{$1$};
\filldraw[black] (-1.5, -1) circle (1.5pt) node[anchor=east]{$i$};
\filldraw[red] (-0.5, -1) circle (1.5pt);
\filldraw[red] (1, -1) circle (1.5pt);
\filldraw[red] (-2.25, -2) circle (1.5pt);
\filldraw[red] (-0.75, -2) circle (1.5pt);
\filldraw[black] (0, 0) circle (1.5pt) node[anchor=south]{$j$};
\filldraw[black] (0.5, 0) circle (1.5pt);
\filldraw[black] (2, 0) circle (1.5pt);

\node (Dot3) at (-1.5, -2.35) {$\underbrace{\thickspace\thickspace\thickspace \ldots \ldots \thickspace\thickspace\thickspace}_{\geq k-\Omega_p(k) \text{ vertices}}$};
\node (Dot4) at (0.25, -1.35) {$\underbrace{\thickspace\thickspace\thickspace \ldots \ldots \thickspace\thickspace\thickspace}_{\Omega_p(k) \text{ vertices}}$};
\node (Dot5) at (1.25, 0.35) {$\overbrace{\thickspace\thickspace\thickspace \ldots \ldots \thickspace\thickspace\thickspace}^{k-1 \text{ vertices}}$};;

\draw[black, thin] (-1, 0)--(-1.5, -1);
\draw[black, thin] (-1, 0)--(-0.5, -1);
\draw[black, thin] (-1, 0)--(1, -1);
\draw[black, thin] (-1.5, -1)--(-2.25, -2);
\draw[black, thin] (-1.5, -1)--(-0.75, -2);

\node (Arrow2) at (3.5, -1) {$\Longrightarrow$};
\node (Label2) at (3.5, -0.3) {$\tau - \tau_0$};
\node (Label3) at (3.5, -0.7) {$\text{moves}$};

\filldraw[black] (5, -1) circle (1.5pt) node[anchor=south]{$1$};
\filldraw[black] (5.5, -1) circle (1.5pt) node[anchor=south]{$i$};
\filldraw[red] (6.25, -2) circle (1.5pt);
\filldraw[red] (4.75, -2) circle (1.5pt);
\filldraw[red] (6.75, -1.5) circle (1.5pt);
\filldraw[red] (8.25, -1.5) circle (1.5pt);
\filldraw[black] (6.5, -0.5) circle (1.5pt) node[anchor=south]{$j$};
\filldraw[black] (7, -0.5) circle (1.5pt);
\filldraw[black] (8.5, -0.5) circle (1.5pt);

\draw[black, thin] (5.5, -1)--(6.25, -2);
\draw[black, thin] (5.5, -1)--(4.75, -2);

\node (Dot6) at (5.5, -2.35) {$\underbrace{\thickspace\thickspace\thickspace \ldots \ldots \thickspace\thickspace\thickspace}_{\geq k-\Omega_p(k) \text{ vertices}}$};
\node (Dot7) at (7.5, -1.15) {$\overbrace{\thickspace\thickspace\thickspace \ldots \ldots \thickspace\thickspace\thickspace}^{\Omega_p(k) \text{ vertices}}$};
\node (Dot8) at (7.75, -0.15) {$\overbrace{\thickspace\thickspace\thickspace \ldots \ldots \thickspace\thickspace\thickspace}^{k-1 \text{ vertices}}$};

\end{tikzpicture}
\end{center}
    
    \caption{A diagram showing the estimation of $\tau - \tau_0$ employed in Heuristic \ref{heuristic1} Not all edges and vertices of $T$ are shown, only the ones relevant to the Heuristic. As shown, Heuristic \ref{heuristic1} mainly bounds $\tau - \tau_0$ by counting the number of operations $i$ receives in the first $\tau_0$ moves. Here the vertices which are initially children of $i$ are colored \textcolor{red}{red}.}
    \label{figureHeuristic1}
\end{figure}

\begin{heuristic}\label{heuristic2}
    Consider an integer $k > 0$. Let $i$ be the $k+1^\text{th}$ rightmost child of $1$, let $j$ be the $k^\text{th}$ rightmost child of $1$, and suppose $i$ has at least $k^2$ children. Note that:
    \begin{itemize}
        \item so long as $i$ still has at least one child and $i$ is still a child of $1$, we have that operating on $i$ increases the number of children of $1$ to the right of $i$ by one;
        \item so long as $i$ is still a child of $1$, we have that operating on $1$ decreases the number of children of $1$ to the right of $i$ by one.
    \end{itemize}
    Now simulate $\textbf{U}_{\Tam_n}$ starting at $T$. Let the random variable $\tau_0$ denote the number of moves until vertex $i$ is no longer connected to vertex $j$, and let $\tau$ denote the number of moves until vertex $1$ is no longer connected to vertex $i$. For each $t \in \mathbb{Z}^+$, let the random variables $M(t)$ and $N(t)$ respectively denote the number of operations which vertices $1$ and $i$ receive after $t$ moves. Let the random variable $\tau'$ denote the smallest $t > \tau_0$ such that $(M(t) - M(\tau_0)) - (N(t) - N(\tau_0)) \geq N(\tau_0)$. Then we have:
    \begin{itemize}
        \item after $\tau_0$ moves, there are at least $\min(N(\tau_0), k^2)$ children of $1$ to the right of $i$;
        \item if after $t > \tau_0$ steps $i$ is still a child of $1$ and $i$ still has children, then the number of children of $1$ is at least:
        \begin{align*}
            N(\tau) - (M(t) - M(\tau_0));
        \end{align*}
        \item since vertex $1$ must be operated on at least $k$ times after $\tau_0$ moves, we have that $\tau_0 \geq k$, and hence w.h.p. we have $N(\tau_0) = \Omega_p(k)$;
        \item conditioned on any given value of $\tau_0$, we have that $(M(t) - M(\tau_0)) - (N(t) - N(\tau_0))$ follows a lazy simple random walk process for $t \geq \tau_0$. Hence by Lemma \ref{lemmaRandomWalk} we have w.h.p. that $\tau' = \Omega_p(k^2)$.
    \end{itemize}
    Now, after $\tau$ steps, we must either have that $N(\tau) - (M(t) - M(\tau_0)) \leq 0$ or that the vertex $i$ no longer has children. The second scenario implies that each of the $\geq k^2$ original children of $i$ were at one point a child of $1$ right of $i$ and left of $j$. Thus $\tau \geq \min(\tau_0 + \tau', k^2 + \tau_0)$, so w.h.p. we have that $\tau - \tau_0 = \Omega_p(k^2)$. See Figure \ref{figureHeuristic2} for a visual depiction of how $T$ changes after $\tau_0$ and $\tau$ Ungar moves; notably, observe that the bound on $\tau - \tau_0$ is improved by incorporating the effect of the operations $i$ receives after the $\tau_0^\text{th}$ move.
\end{heuristic}
\begin{figure}[ht]
    \captionsetup{width=0.9\textwidth}
    \begin{center}
\begin{tikzpicture}[scale=0.8]
\filldraw[black] (-6, 0) circle (1.5pt) node[anchor=south]{$1$};
\filldraw[black] (-7.5, -1) circle (1.5pt) node[anchor=south]{$i$};
\filldraw[black] (-6.75, -1) circle (1.5pt) node[anchor=north]{$j$};
\filldraw[black] (-6, -1) circle (1.5pt);
\filldraw[black] (-4.5, -1) circle (1.5pt);
\filldraw[red] (-8.25, -2) circle (1.5pt);
\filldraw[red] (-6.75, -2) circle (1.5pt);

\node (Dot1) at (-5.25, -1.35) {$\underbrace{\thickspace\thickspace\thickspace \ldots \ldots \thickspace\thickspace\thickspace}_{k-1 \text{ vertices}}$};
\node (Dot2) at (-7.5, -2.35) {$\underbrace{\thickspace\thickspace\thickspace \ldots \ldots \thickspace\thickspace\thickspace}_{\geq k^2 \text{ vertices}}$};

\draw[black, thin] (-6,0)--(-7.5,-1);
\draw[black, thin] (-6,0)--(-6.75,-1);
\draw[black, thin] (-6,0)--(-6,-1);
\draw[black, thin] (-6,0)--(-4.5,-1);
\draw[black, thin] (-7.5, -1)--(-8.25, -2);
\draw[black, thin] (-7.5, -1)--(-6.75, -2);

\node (Arrow) at (-3, -1) {$\Longrightarrow$};
\node (Label) at (-3, -0.7) {$\tau_0 \text{ moves}$};

\filldraw[black] (-1, 0) circle (1.5pt) node[anchor=south]{$1$};
\filldraw[black] (-1.5, -1) circle (1.5pt) node[anchor=east]{$i$};
\filldraw[red] (0, -1) circle (1.5pt);
\filldraw[red] (1.5, -1) circle (1.5pt);
\filldraw[red] (-2.25, -2) circle (1.5pt);
\filldraw[red] (-0.75, -2) circle (1.5pt);
\filldraw[black] (0, 0) circle (1.5pt) node[anchor=south]{$j$};
\filldraw[black] (0.5, 0) circle (1.5pt);
\filldraw[black] (2, 0) circle (1.5pt);

\node (Dot3) at (-1.5, -2.35) {$\underbrace{\thickspace\thickspace\thickspace \ldots \ldots \thickspace\thickspace\thickspace}_{\geq k^2-\Omega_p(k) \text{ vertices}}$};
\node (Dot4) at (0.75, -1.35) {$\underbrace{\thickspace\thickspace\thickspace \ldots \ldots \thickspace\thickspace\thickspace}_{\Omega_p(k) \text{ vertices}}$};
\node (Dot5) at (1.25, 0.35) {$\overbrace{\thickspace\thickspace\thickspace \ldots \ldots \thickspace\thickspace\thickspace}^{k-1 \text{ vertices}}$};
\node (BigArrow) at (-0.75, -1) {$\textcolor{red}{\Nearrow}$};

\draw[black, thin] (-1, 0)--(-1.5, -1);
\draw[black, thin] (-1, 0)--(0, -1);
\draw[black, thin] (-1, 0)--(1.5, -1);
\draw[black, thin] (-1.5, -1)--(-2.25, -2);
\draw[black, thin] (-1.5, -1)--(-0.75, -2);

\node (Arrow2) at (3.5, -1) {$\Longrightarrow$};
\node (Label2) at (3.5, -0.3) {$\tau - \tau_0$};
\node (Label3) at (3.5, -0.7) {$\text{moves}$};

\filldraw[black] (5, -1) circle (1.5pt) node[anchor=south]{$1$};
\filldraw[black] (5.5, -1) circle (1.5pt) node[anchor=south]{$i$};
\filldraw[red] (6.25, -2) circle (1.5pt);
\filldraw[red] (4.75, -2) circle (1.5pt);
\filldraw[red] (6.75, -1.5) circle (1.5pt);
\filldraw[red] (8.25, -1.5) circle (1.5pt);
\filldraw[black] (6.5, -0.5) circle (1.5pt) node[anchor=south]{$j$};
\filldraw[black] (7, -0.5) circle (1.5pt);
\filldraw[black] (8.5, -0.5) circle (1.5pt);

\draw[black, thin] (5.5, -1)--(6.25, -2);
\draw[black, thin] (5.5, -1)--(4.75, -2);

\node (Dot6) at (5.5, -2.35) {$\underbrace{\thickspace\thickspace\thickspace \ldots \ldots \thickspace\thickspace\thickspace}_{\geq k^2-\Omega_p(k^2) \text{ vertices}}$};
\node (Dot7) at (7.5, -1.15) {$\overbrace{\thickspace\thickspace\thickspace \ldots \ldots \thickspace\thickspace\thickspace}^{\Omega_p(k^2) \text{ vertices}}$};
\node (Dot8) at (7.75, -0.15) {$\overbrace{\thickspace\thickspace\thickspace \ldots \ldots \thickspace\thickspace\thickspace}^{k-1 \text{ vertices}}$};

\end{tikzpicture}
\end{center}
    
    \caption{A diagram showing the estimation of $\tau - \tau_0$ employed in Heuristic \ref{heuristic2} The main difference between Heuristics \ref{heuristic1} and \ref{heuristic2} is that Heuristic \ref{heuristic2} incorporates the effect of vertex $i$ operating even after move $\tau_0$. This is shown by the diagonal \textcolor{red}{red} arrow $\textcolor{red}{\Nearrow}$ in the diagram.}
    \label{figureHeuristic2}
\end{figure}

As a demonstration, suppose $T$ is a tree rooted at vertex $1$ with $\geq \log \log n$ children, each of which has $\sim n/\log \log n$ children. See Figure \ref{figureHeuristicTree} for a depiction of $T$. Suppose the $j$th child of $1$ from the right is labelled $i_j$. Simulate $\textbf{U}_{\Tam_n}$ starting at $T$, and let the random variable $t_j$ denote the number of steps until $i_j$ is disconnected from $1$. Then for any positive integer $k < \log \log n$,
\begin{enumerate}
    \item Heuristic \ref{heuristic1} above implies that w.h.p. either $t_{k+1} - t_k \gtrsim n/\log \log n$ or $t_{k+1} - t_k = \Omega_p(t_k)$, so the $t_k$ grow exponentially;
    \item Heuristic \ref{heuristic2} above implies that w.h.p. either $t_{k+1} - t_k \gtrsim n/\log \log n$ or $t_{k+1} - t_k = \Omega_p(t_k^2)$, so $\log t_k$ grows exponentially.
\end{enumerate}
Thus Heuristic \ref{heuristic2} implies that $t_{\lfloor \log \log n \rfloor} \gtrsim n/\log \log n$, so $\mathcal{E}(\Tam_n; F) \gtrsim n/\log \log n$, while Heuristic \ref{heuristic1} suggests a slower (but nevertheless exponential) growth from the $t_k$. In any case, \textit{either} heuristic here is enough to guarantee that $T_{n, T}$ is at least $n^{1-o(1)}$ w.h.p.

\begin{figure}[ht]
    \captionsetup{width=0.8\textwidth}
    \begin{center}
\begin{tikzpicture}[scale=1]
\filldraw[black] (0, 0) circle (1.5pt) node[anchor=south]{$1$};
\filldraw[black] (-1.5, -1) circle (1.5pt);
\filldraw[black] (1.5, -1) circle (1.5pt);
\filldraw[black] (-2.5, -2) circle (1.5pt);
\filldraw[black] (-1, -2) circle (1.5pt);
\filldraw[black] (1, -2) circle (1.5pt);
\filldraw[black] (2.5, -2) circle (1.5pt);

\draw[black, thin] (0, 0)--(-1.5, -1);
\draw[black, thin] (0, 0)--(1.5, -1);
\draw[black, thin] (-1.5, -1)--(-1, -2);
\draw[black, thin] (-1.5, -1)--(-2.5, -2);
\draw[black, thin] (1.5, -1)--(1, -2);
\draw[black, thin] (1.5, -1)--(2.5, -2);

\node (Dot3) at (0, -1.3) {$\underbrace{\thickspace \ldots \ldots \ldots \ldots \ldots \thickspace}_{\geq \log \log n \text{ vertices}}$};
\node (Dot4) at (-1.75, -2.3) {$\underbrace{\thickspace  \ldots \ldots \thickspace}_{\sim n/\log \log n \text{ vertices}}$};
\node (Dot5) at (1.75, -2.3) {$\underbrace{\thickspace  \ldots \ldots \thickspace}_{\sim n/\log \log n \text{ vertices}}$};
\end{tikzpicture}
\end{center}
    
    \caption{A diagram of the ``optimal'' tree $T$ for the application of Heuristics \ref{heuristic1} and \ref{heuristic2}}
    \label{figureHeuristicTree}
\end{figure}

The tree $T$ chosen for this demonstration is ideal for the application of Heuristics \ref{heuristic1} and \ref{heuristic2}, as $T$ has height $2$. In practice the height of $T$ will increase with $|T|$, so the children of the root of $T$ will have less children of their own. Thus for a more ``generic'' $T$, to estimate $T_n$ we will need to group the children of the vertices together, and apply Heuristics \ref{heuristic1} and \ref{heuristic2} to these ``groups'' of vertices. In this setting Heuristic \ref{heuristic1} will be more useful to us, as Heuristic \ref{heuristic2} is more difficult to generalize to a group of vertices, some of which may have fewer children. We will also need to apply an inductive argument in order to estimate how the subtrees formed by the descendants of the children of $1$ evolve over time. For this we will need a complex algorithm (Algorithm \ref{algorithmSimulateTamn}) for simulating $\textbf{U}_{\Tam_n}$ that will document the sizes of the child sub-trees, while allowing each sub-tree to retain enough ``randomness'' to apply the inductive hypothesis.

Using these ideas, we will bound $\mathcal{E}(\Tam_n)$ as follows. Simulate the Ungarian Markov Chain on $\Tam_n$ starting at $\hat{1}$. First, we will bound the probability with which $\hat{1}$ transitions (after some number of steps) to a forest $F$ containing a tree $T$ with many ($\sim n$) vertices. We will do this by modeling the first move in which a vertex $i$ receives an operation with a geometric random variable $g_i$, and then investigating the probability that the maximum value of $g_i$ is uniquely attained for some $i < n/2$. Using the \dfn{n-skyline} (to be defined later) we will also scan the ``peaks'' of $\{g_i\}$ to identify which vertices will become children of the root of $T$. We will consider the sequence $\{a_i\}$ formed by the labels of these peaks, and we will let $t_i$ denote the number of steps (after the $g_1-1^\text{th}$ step) until the vertices $a_i$ and $1$ are disconnected. We will also use the \dfn{n-summary} to ensure that the children of $T$ are sufficiently ``spread out''. We will find that with probability $1/2 - o(1)$, our tree $T$ will satisfy our desired conditions, so it will suffice to bound $\mathbb{P}(T_{n, T} = \Omega(n^{1-o(1)}))$.

Now, given a tree $T$ we with our desired properties, we will bound $\mathbb{P}(T_{n, T} = \Omega(n^{1-o(1)}))$ via an induction argument (on $n$). Using the \dfn{n-summary}, we may identify a subsequence $\{a_{i_j}\} \subset \{a_i\}$ of ``landmark children''  which are spread out and have many children between them. In Lemma \ref{lemmaRapidIncrease}, we will combine Heuristics \ref{heuristic1} and \ref{heuristic2} with the inductive hypothesis to show that for $l \lesssim \log \log n$, the $t_{i_j}$ grow superexponentially. Combined with another application of Heuristic \ref{heuristic1} in Lemma \ref{lemmaExpInContext}, this will establish our bound on $\mathbb{P}(T_{n, T} = \Omega(n^{1-o(1)}))$ (approximately given by Lemma \ref{lemmaMain}). We will then apply Lemma \ref{lemmaMain} in Subsection \ref{ssMainTheoremFinish} to establish Theorem \ref{thmMainTheorem2}.

Note that we will use slightly different notation in the proof for Theroem \ref{thmMainTheorem2} from the notation provided above. Notably we will formulate \ref{lemmaMain} differently, without reference to $T_{n, T}$. Nevertheless the main ideas in the proof of Theorem \ref{thmMainTheorem2} are as described above.

\subsection{Introduction to the Proof of Theorem \ref{thmMainTheorem2}}\label{ssthmMainTheorem2Intro}

The proof of Theorem \ref{thmMainTheorem2} is organized as follows. We will first present several definitions that will be used repeatedly throughout the proof. The bulk of the proof of Theorem \ref{thmMainTheorem2} will be dedicated to proving Lemma \ref{lemmaMain}. Since the proof of Lemma \ref{lemmaMain} is rather involved, we will first prove Theorem \ref{thmMainTheorem2} (and Theorem \ref{thmBigTreeLong}) assuming Lemma \ref{lemmaMain} in Subsection \ref{ssMainTheoremFinish}. We will then return to prove Lemma \ref{lemmaMain} in Subsection \ref{ssLemmaMainProof}.

Now, as discussed in Section \ref{sTamariBackground}, identify the vertices of any ordered forest in $\Tam_n$ with its labels under the preorder traversal labeling. In the proof of Theorem \ref{thmMainTheorem2}, we will simulate the Ungarian Markov chain $\textbf{U}_{\Tam_n}$ using Algorithm \ref{algorithmSimulateTamn}. Throughout this section, while running $\textbf{U}_{\Tam_n}$, we will use the convention that ``on the $t^\text{th}$ step'' refers to before the $t^\text{th}$ random Ungar move has been applied, while ``after the $t^\text{th}$ step'' refers to after the $t^\text{th}$ random Ungar move has been applied. Now, we will fully define Algorithm \ref{algorithmSimulateTamn} in Subsection \ref{ssLemmaMainProof}, but for now we will note that Algorithm \ref{algorithmSimulateTamn} will simulate the first few moves as follows. For any $i \in [n]$ and $t \in \mathbb{Z}^+$, let $S_{i, t}$ and $B_{i, t}$ be i.i.d. Bernoulli variables with parameter $p$ (i.e. $\mathbb{P}(S_{i, t} = 1) = p)$. Given any $t \in \mathbb{Z}^+$, suppose that some $j \in [n]$ satisfies $S_{j, t'} = 0$ for all $t' < t$. Then given any $i \in [n]$, operate on the $t^\text{th}$ step as follows:
\begin{itemize}
    \item if $S_{i, t'} =0$ for all $t' < t$, operate on vertex $i$ if and only if $S_{i, t} = 1$;
    \item else, operate on vertex $i$ if and only if $B_{i, t} = 1$.
\end{itemize}

Now, for any $i \in [n]$, let $g_i$ be the smallest integer $t$ such that vertex $i$ receives an operation on move $t$. If  $\textbf{U}_{\Tam_n}$ is simulated using Algorithm \ref{algorithmSimulateTamn}, then $g_i$ is precisely the smallest $t$ such that $S_{i, t} = 1$. Observe that no matter how $\textbf{U}_{\Tam_n}$ is simulated, the variables $\{g_i\}_{i \in [n]}$ are distributed as i.i.d. geometric random variables with parameter $p$.
\begin{definition}
    For any subinterval $I = [i, j] \subset [n]$ and positive integer $m$, let $E_{I, m}$ be the event that $g_i = m$ and $g_l < m$ for all $l \in [i + 1, j]$.
\end{definition}
Similarly to the proof of Proposition \ref{propTreeProperties} (c), we have that $E_{I, m}$ implies that for the first $m - 1$ steps, all of the vertices with label in $[i + 1, j]$ will be descendants of the vertex $i$. Now, we will record data about the variables $\{g_i\}$ using a collection of two-rowed arrays. We will want our arrays to satisfy the following property.

\begin{definition}
    A two-rowed array $A = \begin{bmatrix} a_1 & a_2 & \ldots & a_l \\ b_1 & b_2 & \ldots & b_l \end{bmatrix}$ is \dfn{$n$-childlike} if:
    \begin{itemize}
        \item For all $i \in [l]$, $a_i, b_i \in [n]$;
        \item $a_1 = 1$;
        \item $n \geq a_2 > \ldots > a_l = 2$, and;
        \item $b_1 > b_2 \geq b_3 \geq b_4 \geq \cdots \geq b_l$.
    \end{itemize}
\end{definition}

We will use the following operation on two-rowed arrays to identify the ``peaks'' of the bottom row.
\begin{definition}
    Consider a positive integer $m$, and let $c_1, \ldots c_m$ be a sequence of positive integers. Given a two-rowed array $A$ of the form $A = \begin{bmatrix} 1 & 2 & \ldots & m \\ c_1 & c_2 & \ldots & c_m \end{bmatrix}$, let its \dfn{$n$-skyline} be the array $\begin{bmatrix} a_1 & a_2 & \ldots & a_l \\ b_1 & b_2 & \ldots & b_l \end{bmatrix}$ defined as follows:
    \begin{enumerate}
        \item $a_1 = 1$;
        \item $a_2$ is the largest $j \in [m]$ satisfying $c_j = \max(c_i : i \in [2, m])$;
        \item For $i > 2$, $a_i$ is inductively defined as the largest $j$ satisfying $c_j = \max(c_u : u \in [2, a_{i-1} - 1])$. In particular, we terminate the sequence at $a_l = 2$;
        \item For all $i \in [l]$, $b_i = c_{a_i}$.
    \end{enumerate}
\end{definition}

Now, define the random array $J$ to be the $n$-skyline of the array $\begin{bmatrix} 1 & 2 & \ldots & n \\ g_1 & g_2 & \ldots & g_n \end{bmatrix}$. We will use $J$ to keep track of (a large subset of) the children of vertex $1$. Observe that $J$ is completely determined by the values of the variables $g_i$. Moreover, note that if $g_1 > g_{a_2}$, then $J$ is $n$-childlike.

\begin{definition}
    Given an $n$-childlike array $A$, define $E_A$ to be the event that $J = A$.
\end{definition}

Consider an $n$-childlike array $A = \begin{bmatrix} a_1 & a_2 & \cdots & a_l \\ b_1 & b_2 & \cdots & b_l \end{bmatrix}$. Then by Proposition \ref{propTreeProperties} (c), $E_A$ implies that after $g_1 - 1$ steps, the tree $\hat{1}$ transitions to a tree with vertex $1$ as its root and with vertices $a_2, \ldots, a_l$ all as children of vertex $1$. Now, we will use the following notions to ensure the arrays we consider are sufficiently long and ``equidistributed''.

\begin{definition}
    Given any decreasing subsequence $I = \{a_1, \ldots, a_l\}$ of integers in $[n]$, its \dfn{$n$-summary} is the subsequence $I' = \{a_{i_1}, \ldots, a_{i_{l'}}\}$ chosen inductively as follows:
    \begin{itemize}
        \item We set $a_{i_1} = a_1$;
        \item For each $j > 1$, pick $a_{i_j}$ to be the smallest integer which satisfies $a_{i_j} \geq \frac{a_{i_{j-1}}}{\log n}$. If no such integer exists, the subsequence ends.
    \end{itemize}
\end{definition}

\begin{definition}
    Given any $n$-childlike array $A = \begin{bmatrix} a_1 & \cdots & a_l \\ b_1 & \cdots & b_l \end{bmatrix}$, let $\{a_{i_1}, \ldots, a_{i_{l'}}\}$ be the $n$-summary of $\{a_2, \ldots a_l\}$. Then the \dfn{$n$-summary} of $A$ is the subarray $A' = \begin{bmatrix} a_1 & a_{i_1} & a_{i_2} & \cdots & a_{i_{l'}} \\ b_1 & b_{i_1} & b_{i_2} & \cdots & b_{i_{l'}} \end{bmatrix}$.
\end{definition}

\begin{definition}
    A two-rowed array $A = \begin{bmatrix} a_1 & \cdots & a_l \\ b_1 & \cdots & b_l \end{bmatrix}$ is \dfn{$n$-good} if it is $n$-childlike, and its $n$-summary $A' = \begin{bmatrix} a_1 & a_{i_1} & a_{i_2} & \cdots & a_{i_{l'}} \\ b_1 & b_{i_1} & b_{i_2} & \cdots & b_{i_{l'}} \end{bmatrix}$ satisfies that $a_{i_1} \geq \frac{n}{\log n}$ and $a_{i_{l'}} \leq (\log n)^3$.
\end{definition}

We will soon bound $\mathcal{E}(\Tam_n)$ by conditioning on the probabilty that $J$ is a given $n$-good array. Intuitively, the $n$-good condition is a length condition that ensures the Ungarian Markov chain $\textbf{U}_{\Tam_n}$ does not terminate too quickly.

Note that by definition, $a_{i_{k + 2}} \leq \frac{a_{i_k}}{\log n}$ for all positive integers $k$. Since $a_{i_1} \leq n$, this implies that:
\begin{align*}
    a_{i_{l'}} \leq a_{i_{2 \lceil l'/2 \rceil - 1}} &\leq (\log n)^{1 - \lceil l'/2 \rceil}a_{i_1}, \\
    (\log n)^{\lceil l'/2 \rceil - 1} &\leq a_{i_1} / a_{i_{l'}} \leq n, \\
    \lceil l'/2 \rceil - 1 &\leq \log n  / \log \log n, \\
    l' &\leq \frac{2 \log n}{\log \log n} + 2.
\end{align*}
Moreover by the bound $a_{i_{k+1}} \geq \frac{a_{i_k}}{\log n},$ we have that
\begin{align*}
    a_{i_1} &\leq a_{i_{l'}}(\log n)^{l'-1}, \\
    (\log n)^{l' - 1} &\geq a_{i_1}/a_{i_{l'}} \geq n/(\log n)^4, \\
    l' &\geq \log n / \log \log n - 3. \numberthis \label{eqGoodArrayBound}
\end{align*}

Now, simulate $\textbf{U}_{\Tam_n}$ starting at $\hat{1}$ using Algorithm \ref{algorithmSimulateTamn}. Let the random variable $T_n$ denote the number of steps until vertex $1$ is disconnected from every other vertex. For an absolute constant $\Cl[abcon]{421} > 4$ to be chosen later, define 
\begin{align*}
    f(x) &= \begin{cases} \max\bigg(1, x\exp\Big(-p^8\exp(\Cr{421}/p^2)(\log \log x)^4\Big)\bigg) & \text{if }x \geq 16, \\ 1 & \text{else}. \end{cases}
\end{align*}
Before we proceed, we will make some preliminary adjustments to $\Cr{421}$ as follows. Pick a sufficiently large $\Cr{421}$ such that for all $n \leq e^{e^4}$, we have that $f(n) = 1$. Then for any $a \in (4, \Cr{421}/p^2]$, we have that $e^a/a^4$ is monotonically increasing with $a$ in this range, so
\begin{align*}
    \frac{\exp(a)}{a^4} &\leq \frac{p^8\exp(\Cr{421}/p^2)}{\Cr{421}^4} \\
    &\leq p^8\exp(\Cr{421}/p^2), \\
    \exp(\exp(a)) &\leq \exp\Big(p^8\exp(\Cr{421}/p^2)a^4\Big),
\end{align*}
so $f(a) \leq 1$. Hence $f(x) \leq 1$ for all $x \in [16, e^{e^{\Cr{421}/p^2}}]$. Notably $f$ is continuous at $x = 16$, and is hence continuous everywhere. Moreover, observe that $h(x) = f(x)/x$ is continuous and decreasing along $[1, \infty)$. Hence for any $a, b, r \in [1, \infty)$, we have that
\begin{align*}
    f(a + b) = (a+b)h(a+b) \leq ah(a) + bh(b) = f(a) + f(b),
\end{align*}
and
\begin{align*}
    f(ra) = (ra)h(ra) \leq rah(a) = rf(a).
\end{align*}
These properties will be useful to us moving forward. Now, the main ingredient in our proof of Theorem $\ref{thmMainTheorem2}$ is the following theorem.
\begin{theorem}\label{thmBigTreeLong}
    For all positive integers $n, m$,
    \begin{align*}
        \mathbb{P}(T_n  \geq f(n) + m - 1 | E_{[1, n], m}) &\geq \frac{1}{2}.
    \end{align*}
\end{theorem}

The proof of Theorem \ref{thmBigTreeLong} mainly relies on the following lemma.

    \begin{lemma}\label{lemmaMain}
        Consider any $n$-good array $A$ whose bottom leftmost entry is $m$. Then
        \begin{align*}
            \mathbb{P}(T_n \geq f(n) + m - 1 | E_A) &\geq 0.85.
        \end{align*}
    \end{lemma}

The proof of Lemma \ref{lemmaMain} is quite lengthy; we will come back to it in Subsection \ref{ssLemmaMainProof}. Instead, we will first prove Theorems \ref{thmBigTreeLong} and \ref{thmMainTheorem2} assuming Lemma \ref{lemmaMain}.

\subsection{Proofs of Theorems \ref{thmBigTreeLong}, \ref{thmMainTheorem2} Assuming Lemma \ref{lemmaMain}}\label{ssMainTheoremFinish}

Throughout this section, let $k = \log n$. Before we prove Theorem \ref{thmBigTreeLong}, we will first prove the following lemma.

\begin{lemma}\label{lemmaUsuallyGood}
        For any integer $n > e^{e^{\Cr{421}/p^2}}$, we have that
        \begin{align*}
            \mathbb{P}\bigg( \Big(\bigcup_{A \textup{ good}} 
            E_A \Big) \thinspace\bigg|\thinspace E_{[1, n], m} \bigg) &\geq 1 - \frac{2}{\log \log n},
        \end{align*}
        where the union is over all $n$-good arrays $A$.
    \end{lemma}
    \begin{proof} 
        Simulate the Ungarian Markov chain $\textbf{U}_{\Tam_n}$ using Algorithm \ref{algorithmSimulateTamn}, and then condition on $E_{[1, n], m}$. It suffices to show that $J = \begin{bmatrix} a_1 & a_2 & \cdots & a_l \\ b_1 & b_2 & \cdots & b_l \end{bmatrix}$ is $n$-good with probability at least $1 - \frac{2}{\log \log n}$. Now, given any $i \geq 2$, we claim that for all reals $r \in [0, 1]$,
        \begin{align*} 
            \mathbb{P}\bigg( \frac{a_{i + 1} - 1}{a_i - 2} \geq r \thinspace \bigg|\thinspace a_i > 2 \bigg) \geq 1 - r.
        \end{align*}
        Indeed, condition by fixing the value of $a_i$. Then given any ordered $(a_i - 2)$-tuple $\mathfrak{t}$, the probability that $\{g_{2}, \ldots g_{a_i - 1}\} = \mathfrak{t}$ as ordered tuples is equal to the probability that $\{g_{2}, \ldots, g_{a_i - 1}\} = \mathfrak{t}'$ for any permutation $\mathfrak{t}'$ of $\mathfrak{t}$. Because $a_{i+1}$ is defined as the \textit{largest} index such that $g_{a_{i+1}} = \max(g_{2}, \ldots, g_{a_i - 1})$, we retrieve the inequality in the case of fixed $a_i$; the inequality above is retrieved from summing over all possible values of $a_i$. Note that conditioning on $E_{[1, n], m}$ does not impact this argument. Moreover, by analogous reasoning we have that
        \begin{align*}
            \mathbb{P}\bigg(\frac{a_2 - 1}{n - 1} \geq r \bigg) &\geq 1 - r.
        \end{align*}

        Now let the $n$-summary of $J$ be $J' = \begin{bmatrix} a_1 & a_{i_1} & a_{i_2} & \cdots & a_{i_{l'}} \\ b_1 & b_{i_1} & b_{i_2} & \cdots & b_{i_{l'}} \end{bmatrix}$. Adjust $\Cr{421}$ to ensure that $\log n > 2$. By the claim above we have that $a_{i_1} = a_2 \geq n/k$ with probability at least
        \begin{align*}
            1 - \frac{n/k-1}{n-1} \geq 1 - \frac{2}{k}.
        \end{align*}
        Moreover, for each $j$ satisfying that $a_{i_j} > k^3$, the probability that $a_{i_{j+1}}$ exists (i.e. the summary has at least $j + 2$ columns), conditioned on the probability that $a_{i_j}$ exists, is at least
        \begin{align*}
            1 - \frac{k^2 - 1}{k^3 - 2} \geq 1 - \frac{k^2 - 2/k}{k^3 - 2} \geq 1 - \frac{1}{k}.
        \end{align*}
        Now, for all $j$ we have that $a_{i_{j+2}} < a_{i_j}/k$, so thus the maximum index $u$ for which $a_{i_u} \geq k^3$ satisfies
        \begin{align*}
            a_{i_{u}} \leq a_{i_{2 \lceil u/2 \rceil - 1}} &\leq k^{1 - \lceil u/2 \rceil}a_{i_1}, \\
            k^{\lceil u/2 \rceil - 1} &\leq a_{i_1} / a_{i_{u}} \leq n/k^3, \\
            \lceil u/2 \rceil +2 &\leq \log n  / \log \log n, \\
            u &\leq \frac{2 \log n}{\log \log n} - 4.
        \end{align*}
        Hence by a union bound we have that
        \begin{align*}
            \mathbb{P}(J \textup{ is good}) &\geq 1 - \frac{2 (\log n/\log \log n) - 2}{\log n} \\
            &\geq 1 - \frac{2}{\log \log n},
        \end{align*}
        as desired.
    \end{proof}

\begin{proof}[Proof of Theorem \ref{thmBigTreeLong}]
    For $n \leq e^{e^{\Cr{421}/p^2}}$ we have $f(n) = 1$ and hence the statement is trivial. Thus assume $n > e^{e^{\Cr{421}/p^2}}$. Adjust $\Cr{421}$ to be at least $10$; then $\log \log n > \Cr{421}/p^2 \geq 10.$ Now, by definition, if an $n$-good array $A$ satisfies that the event $E_A$ is contained in $E_{[1, n], m}$, then the bottom leftmost entry of $A$ must be $m$. Hence, combining Lemma \ref{lemmaMain} and Lemma \ref{lemmaUsuallyGood} gives us that
    \begin{align*}
        \mathbb{P}(T_n \geq f(n) + m - 1 | E_{[1, n], m}) &\geq 0.85\bigg(1 - \frac{2}{\log \log n}\bigg) \\
        &\geq \frac{1}{2},
    \end{align*}
    as desired.
\end{proof}

We now prove Theorem \ref{thmMainTheorem2}.

\begin{proof}[Proof of Theorem \ref{thmMainTheorem2}]
    Simulate the Ungarian Markov chain on $\Tam_n$ starting at $\hat{1}$ using Algorithm \ref{algorithmSimulateTamn}. For any integers $i \in [n]$ and $M \in \mathbb{Z}^+$, let $A_{i, M}$ be the event that $g_i = M > \max_{j \neq i}(g_j)$. By Proposition \ref{propTreeProperties} (b), we have that the induced subgraph formed by the vertices $[i, n]$ transitions as per the Ungarian Markov chain on $\Tam_{n - i + 1}$. Hence
    \begin{align*}
        \E_p(\Tam_n | A_{i, M}) &\geq \E_p(\Tam_{n-i+1} | E_{[1, n-i+1], M}) \\
        &\geq \mathbb{E}[T_{n-i+1} | E_{[1, n-i+1], M}] \\
        &\geq \frac{f(n - i + 1)}{2},
    \end{align*}
    where the last inequality is by Theorem \ref{thmBigTreeLong}. Now, note that $\sum_{M = 1}^{\infty} \mathbb{P}(A_{i, M})$ is the probability that $g_i > g_j$ for all $j \neq i$, which is precisely $\frac{\zeta_p(n)}{n}$ by symmetry (where $\zeta_p(n)$ is the constant discussed in Subsection \ref{ssIntroLattice}). Thus,
    \begin{align*}
        \E_p(\Tam_n) &\geq \sum_{i=1}^n \sum_{M=1}^{\infty} \E_p(\Tam_n | A_{i, M})\mathbb{P}(A_{i, M}) \\
        &\geq \frac{f(\lceil n/2 \rceil)}{2} \sum_{i=1}^{\lceil n/2 \rceil} \sum_{M=1}^{\infty} \mathbb{P}(A_{i, M}) \\ 
        &\geq \frac{\zeta_p(n)}{4}f(\lceil n/2 \rceil) \\
        &\geq \frac{\zeta_p^-f(n)}{8}.
    \end{align*}
    By choosing $\Cr{1}$ to be a sufficiently large constant greater than $\Cr{421}$, this implies Theorem \ref{thmMainTheorem2}, as desired.
\end{proof}

\subsection{Proof of Lemma \ref{lemmaMain}}\label{ssLemmaMainProof}

        We proceed by induction on $n$. As has been standard throughout Section \ref{sTamnProof}, identify the vertices of each ordered forest $F \in \Tam_n$ with its preorder traversal label. Recall that for all $n \leq e^{e^{\Cr{421}/p^2}}$, we have $f(n) = 1$ and hence the theorem is trivial. Henceforth assume $n > e^{e^{\Cr{421}/p^2}}$, and assume throughout the rest of this section that Lemma \ref{lemmaMain} is true for all integers $n'$ less than $n$.

        Now, adjust $\Cr{421}$ such that all $n > e^{e^{\Cr{421}/p^2}}$ satisfy $\log n / \log \log n - 10 > 402(\log \log n)^3$. We now define an algorithm for simulating the Ungarian Markov Chain $\textbf{U}_{\Tam_n}$ starting at $\hat{1}$.

        \begin{algorithm}\label{algorithmSimulateTamn}

        First, define $7$ sequences $\{S_{i, t}\}$, $\{B_{i, t}\}$, $\{B_{i, t}'\}$, $\{C_{j, t}\}$, $\{D_{i, t}\}$, $\{D_{i, t}'\}$, $\{D_{i, t}^\dagger\}$ of i.i.d. Bernoulli variables of parameter $p$ (i.e. $\mathbb{P}(S_{i, t} = 1) = p$). For convenience, let $\mathcal{B}$ denote the set of random variables
        \begin{align*}
            \mathcal{B} = \{S_{i, t}\} \cup \{B_{i, t}\} \cup \{B_{i, t}'\} \cup \{C_{j, t}\} \cup \{D_{i, t}\} \cup \{D_{i, t}'\} \cup \{D_{i, t}^\dagger\}.
        \end{align*}
        In all these sequences, let $i$ range within the interval $[1, n]$ and $t$ range along $\mathbb{Z}^+$. Also, for each ordered forest $F \in \Tam_n$, identify the vertices of $F$ with its preorder traversal labeling.

        Begin with the ordered forest $\hat{1} \in \Tam_n$ before step $1$. Consider any $t \in \mathbb{Z}^+$. Suppose that some $j \in [n]$ satisfies $S_{j, t'} = 0$ for all $t' < t$. Then given any vertex $i \in [n]$, on the $t^\text{th}$ step:
        \begin{itemize}
            \item if $S_{i, t'} = 0$ for all $t' < t$, operate on vertex $i$ if and only if $S_{i, t} = 1$;
            \item else, operate on vertex $i$ if and only if $B_{i, t} = 1$.
        \end{itemize}

        Now, suppose that for all $i \in [n]$, $S_{i, t'} = 1$ for some $t' < t$. For all $i \in [n]$, let $g_i$ denote the smallest $t'$ such that vertex $i$ receives an operation on move $t'$ (i.e. $S_{i, t'} = 1$). Define the random array $J$ as before to be the $n$-skyline of the array $\begin{bmatrix} 1 & 2 & \ldots & n \\ g_1 & g_2 & \ldots & g_n \end{bmatrix}$.
        \begin{itemize}
            \item If $J$ is not $n$-good, then on the $t^\text{th}$ step, operate on a vertex $i$ if and only if $S_{i, t} = 1$.
            \item If $J$ is $n$-good, let $J = \begin{bmatrix} a_1 & a_2 & \cdots & a_l \\ b_1 & b_2 & \cdots & b_l \end{bmatrix}$. Let $J' = \begin{bmatrix} a_1 & a_{i_1} & a_{i_2} & \cdots & a_{i_{l'}} \\ b_1 & b_{i_1} & b_{i_2} & \cdots & b_{i_{l'}} \end{bmatrix}$ be the $n$-summary of $J$.
            \begin{itemize}
                \item On the $t^\text{th}$ step, say $i$ is the smallest index at least $2$ satisfying that vertex $1$ is still connected to vertex $a_i$. Then operate on vertex $1$ as follows:
                    \begin{itemize}
                        \item If $i$ exists and $i \leq i_{2\lceil 201 \log \log n \rceil}$, then operate on vertex $1$ if and only if $D_{i, t} = 1$.
                        \item Else, if $i \in (i_{j-2}, i_j]$ for some even $j \in [2\lceil 201 \log \log n \rceil+2, 2\lceil 201 (\log \log n)^3 \rceil]$, then operate on vertex $1$ if and only if $D_{j, t}' = 1$.
                        \item Else, if $i > 2 \lceil 201 (\log \log n)^3 \rceil$ or $i$ does not exist, operate on vertex $1$ if and only if $D_{1,t}^\dagger = 1$.
                    \end{itemize}
                \item Consider an integer $i \in [2, a_{i_{2\lceil 201 (\log \log n)^3 \rceil}})$. Then on the $t^\text{th}$ step, operate on vertex $i$ if and only if $B_{i, t} = 1$.
                \item Consider any integer $i \in [a_{i_{2\lceil 201 (\log \log n)^3 \rceil}}, a_{i_{2\lceil 201 \log \log n \rceil}})$. Suppose that $i \in [a_{i_{j}}, a_{i_{j-2}})$ for some even integer $j \in [2 \lceil 201 \log \log n \rceil +2, 2\lceil 201 (\log \log n)^3 \rceil]$. Let $F_t \langle a_{i_{j}}, a_{i_{j-2}} - 1 \rangle$ denote the induced subgraph of the current state (interpreted as a graph-theoretic forest) formed by the vertices in $[a_{i_{j}}, a_{i_{j-2}})$. On the $t^\text{th}$ step, operate on vertex $i$ as follows:
                \begin{itemize}
                    \item If after $t - 1$ steps $i$ is the largest root of $F_t \langle a_{i_{j}}, a_{i_{j-2}} - 1 \rangle$ that has at least one child, then operate on vertex $i$ if and only if $C_{j, t} = 1$;
                    \item Else, operate on vertex $i$ if and only if $B_{i, t} = 1$.
                \end{itemize}
                \item Consider any integer $i \geq a_{i_{2\lceil 201 \log \log n \rceil}}$. Suppose that $i \in [a_j, a_{j-1} - 1]$ for some \\$j \in [2, i_{2\lceil 201 \log \log n \rceil}]$ (here when $j = 2$, use the interval $[a_2, n]$ instead of $[a_2, a_1 - 1]$). On the $t^\text{th}$ step, operate on vertex $i$ as follows:
                \begin{itemize}
                    \item If $i \neq a_j$, then operate on $i$ if and only if $B_{i, t} = 1$.
                    \item Else, if vertex $1$ is connected to vertex $a_{j-1}$ after $t - 1$ steps, then operate on $i$ if and only if $B_{i, t} = 1$.
                    \item Else, operate on vertex $i$ if and only if $B_{i, t}' = 1.$
            \end{itemize}
        \end{itemize}
        \end{itemize}
        
        \end{algorithm}
    
    When using Algorithm \ref{algorithmSimulateTamn}, it will be useful to remember the following fact from Corollary \ref{corTamnTreeUngarMoves}. Suppose that on a given turn $t$, the vertices $l_1, l_2, \ldots l_m$ are chosen to be operated on, where the preorder traversal labels satisfy $l_1 < \ldots < l_m$. Then on that step, the vertices are operated on in order of increasing label, i.e. $l_1$ is operated on first and $l_m$ last.

    Now, while using Algorithm \ref{algorithmSimulateTamn}, suppose $\tau - 1$ steps have elapsed. Then on the $\tau^\text{th}$ turn, for any vertex $i$, the status of whether vertex $i$ is chosen for an operation is determined by some variable in $\mathcal{B} = \{S_{i, t}, B_{i, t}, B_{i, t}', C_{j, t}, D_{i, t}, D_{i, t}'\}$ whose indices satisfy $t = \tau$. Moreover, the variables chosen for each vertex are distinct, and the choice of variables for each vertex is determined by the values of \\$\{S_{i, t}, B_{i, t}, B_{i, t}', C_{j, t}, D_{i, t}, D_{i, t}' : i \in [n], \tau < t\}$. Thus each vertex is operated on with independent probability $p$, and so Algorithm \ref{algorithmSimulateTamn} applies a random Ungar move (with parameter $p$) on each step. Hence Algorithm \ref{algorithmSimulateTamn} indeed simulates $\textbf{U}_{\Tam_n}$, starting at $\hat{1}$.

    Now, let our given $n$-good array be
    \begin{align*}
        A = \begin{bmatrix} a_1 & a_2 & \ldots & a_l \\ b_1 & b_2 & \ldots & b_l \end{bmatrix},
    \end{align*}
    and let its $n$-summary be
    \begin{align*}
        A' = \begin{bmatrix} a_1 & a_{i_1} & \ldots & a_{i_{l'}} \\ b_1 & b_{i_1} & \ldots & b_{i_{l'}} \end{bmatrix}.
    \end{align*}
    Note that conditioning on $E_A$ implies that $J = A$ and $J' = A'$. Throughout the following proof, we will always implicitly condition on $E_A$. To do so, we will require that:
    \begin{enumerate}
        \item All $i \in [l]$ satisfy $S_{a_i, t} = 0$ for $t < g_{a_i}$ and $S_{a_i, t} = 1$ for $t = g_{a_j}$;
        \item If $j \in (a_i, a_{i-1})$ for some $i \in [3, l]$, then among the variables $(S_{j, 1}, \ldots S_{j, g_{a_i} - 1})$, at least one is equal to $1$;
        \item If $j \in (a_2,  n]$, then among the variables $(S_{j, 1}, \ldots S_{j, g_{a_2} - 1})$, at least one is equal to $1$;
    \end{enumerate}
    Note that in the above, $l$ denotes the length of the two-rowed array $A$. Also note that if $j$ is not equal to $a_i$ for some $i$, then either $j \in (a_i, a_{i-1})$ for some $i$, or $j \in (a_2, n]$. The above requirements clearly are equivalent to conditioning on $E_A$. Moreover, from the above, it is clear that the event $E_A$ is independent from the state of any Bernoulli variable in $\mathcal{B}$ not mentioned above (e.g. the variables $\{B_{i, t}\}$, $\{B_{i, t}'\}$, $\{C_{j, t}\}$, $\{D_{i, t}\}$, $\{D_{i, t}'\}$, $\{D_{i, t}^\dagger\}$, as well as some of the variables $S_{i, t}$).
    
    Throughout the rest of the proof of Lemma \ref{lemmaMain}, we will treat the variables $\{g_{a_i}\}_{i \in [l]}$ as fixed (rather than random) variables. Also note that by Equation \eqref{eqGoodArrayBound}, the condition that any $n \geq e^{e^{\Cr{421}/p^2}}$ satisfies $\log n / \log \log n - 10 > 402 (\log \log n)^3$ ensures that the index $l'$ in the array $J'$ is greater than \\$2\lceil 201 (\log \log n)^3 \rceil$. So e.g. the term $a_{i_{\lfloor 2\lceil 201 (\log \log n)^3 \rceil \rfloor + 1}}$ exists.
    
    Now, for all $j \in [2, l]$, let $t_j$ be the random variable denoting the number of steps after the $g_1 - 1^\text{th}$ step until vertex $1$ is no longer connected to vertex $a_{j}$. Observe that $t_2 \geq 1$, and since vertex $1$ may only be operated on at most once every step, the number of children it has can only decrease by at most $1$ every step. Hence $t_{i+1} - t_i \geq 1$ for every $i \in [2, l]$. In particular, $t_{i_{\lceil \log \log n \rceil}} \geq i_{\lceil \log \log n \rceil} - 1 \geq \log \log n$. We now establish the following bound on $t_{i_{2\lceil 201 \log \log n \rceil}}$.

    \begin{lemma}\label{lemmaRapidIncrease}
        There exists an event $A_1 \subset E_A$ satisfying that
        \begin{itemize}
            \item $A_1$ is determined by the random variables
            \begin{itemize}
                \item $D_{j, t}$, for $j \leq i_{2\lceil 201 \log \log n \rceil}$ and $t \in \mathbb{Z}^+$, and
                \item $S_{j, t}$, $B_{j, t}$, and $B_{j, t}'$, for $j \geq a_{i_{2\lceil 201 \log \log n \rceil}}$ and $t \in \mathbb{Z}^+$;
            \end{itemize}
            \item $A_1$ is contained in the event that
            \begin{align*}
                t_{i_{2\lceil 201 \log \log n \rceil}} &\geq  \frac{f\Big(\frac{n}{2e^{403 (\log \log n)^2}}\Big)}{1700};
            \end{align*}
            \item $\mathbb{P}(A_1 | E_A) \geq 0.9$.
        \end{itemize}
    \end{lemma}

    The proof of Lemma \ref{lemmaRapidIncrease} will mainly depend on a growth estimate presented in Lemma \ref{lemmaIndiv}. In turn, Lemma \ref{lemmaIndiv} will depend on Lemma \ref{lemmaApplyInductiveHypothesis}, which will allow us to apply the inductive hypothesis of Lemma \ref{lemmaMain} to develop stronger bounds. This induction argument will require us to condition on events of the following form.
    \begin{definition}
        Condition on $E_A$, and consider any $i \leq 2 \lceil 201 \log \log n \rceil$. Then call any event $E$ \dfn{i-independent} if $E$ satisfies the following properties:
        \begin{itemize}
            \item $E$ is independent of the random variables:
            \begin{itemize}
                \item $B_{a_i, t}$ for $t \in \mathbb{Z}^+$;
                \item $B_{j, t}$ for $j \in [a_i, a_{i-1} - 1]$ and $t \in \mathbb{Z}^+$;
            \end{itemize}
            \item For each $j \in [a_i + 1, a_{i-1} - 1]$ and $t \in \mathbb{Z}^+$, $E$ is conditionally independent of $S_{i, t}$ given $E_A$.
        \end{itemize}
    \end{definition}

    \begin{lemma}\label{lemmaApplyInductiveHypothesis}
        Consider an integer $i \leq 2 \lceil 201 \log \log n \rceil$, and any $i$-independent event $E$. Let $t_i'$ be the number of steps after the $g_{a_i}-1^\text{th}$ step until no vertex in $[a_i + 1, a_{i-1} - 1]$ is a child of $a_i$. Then
        \begin{align*}
            \mathbb{P}(t_i' \geq f(a_{i-1} - a_i + 1) | E_A \cap E) \geq 0.85.
        \end{align*}
    \end{lemma}
    \begin{proof}

    Consider any sequence $\gamma = \{c_{a_i + 1}, c_{a_i + 2}, \ldots c_{a_{i-1}-1}\}$ of positive integers, all of which are less than $g_{a_i}$. Let $m = a_{i-1} - a_i +1$, and let $\eta_\gamma = \{d_1, d_2, \ldots d_m\}$ be the corresponding sequence defined by:
    \begin{itemize}
        \item $d_1 = g_{a_i}$;
        \item $d_j = c_{a_i + j - 1}$ for $j \in [2, m-1]$;
        \item $d_m = g_{a_{i-1}}$.
    \end{itemize}
    When simulating $\textbf{U}_{\Tam_n}$, let $E_\gamma$ be the event that $g_j = c_j$ for all $j \in [a_i + 1, a_{i-1} - 1]$. For any sequence $\eta= \{d_1, d_2, \ldots d_m\}$, define the \dfn{$\eta$-indexed Ungarian Markov process} $\textbf{U}^\eta_{\Tam_m}$ as follows. On step $t = 0$, start with the tree $\hat{1} \in \Tam_m$. Then for each vertex $j \in [m]$ and $t \in \mathbb{Z}^+$:
    \begin{enumerate}
        \item If $t < d_j$, then do not let $j$ receive an operation on turn $t$;
        \item If $t = d_j$, let $j$ receive an operation on turn $t$;
        \item If $t > d_j$, let $j$ receive an operation on turn $t$ with independent probability $p$.
    \end{enumerate}
    On each turn $t$, apply an Ungar move by successively operating on the vertices chosen to be operated on, in increasing order of preorder traversal label. Now, for any ordered forest $F \in \Tam_n$ and sub-interval $[i, j] \subset [n]$, let $F \langle i, j \rangle$ denote the induced sub-forest formed by the vertices with labels in $[i, j]$. Using Algorithm \ref{algorithmSimulateTamn}, simulate $\textbf{U}_{\Tam_n}$ starting at $\hat{1}$ conditioned on $E_A \cap E_\gamma \cap E$. This prescribes the value of $g_m$ for all $m \in \{a_j\}_{j \in [l]} \cup [a_i, a_{i-1}]$; for any such $m$, we may condition the value of $g_m$ by setting $S_{m, t'} = 0$ for $t'$ less than the prescribed value of $g_m$, and $S_{m, t'} = 0$ for $t'$ equal to the prescribed value of $g_m$. For any $t \geq 0$, let $F_t$ denote the forest we retrieve after running this Markov chain for $t$ steps; here we let $F_0 = \hat{1} \in \Tam_n$. Our main claim is that for any $\gamma \subset [g_{a_i}-1]^{m-1}$, the sequence of random forests $\{F_t \langle a_i, a_{i-1} \rangle\}_{t \geq 0}$ in $\Tam_m$ is identically distributed to the sequence of random forests $\{H_t\}_{t \geq 0}$ retrieved by starting at $H_0 = \hat{1} \in \Tam_m$ and then running the $\eta_\gamma$-indexed Ungarian Markov Process $U^{\eta_\gamma}_{\Tam_m}$.

    As above, simulate $\textbf{U}_{\Tam_n}$ starting at $\hat{1}$ using Algorithm \ref{algorithmSimulateTamn}, conditioned on $E_A \cap E \cap E_\gamma$. Consider any $j \in [a_i, a_{i-1} - 1]$; recall that $E_A \cap E \cap E_\gamma$ fixes the value of $g_j$. Since $E$ is $i$-independent, we find that vertex $j$ receives no operations on the first $g_j - 1$ moves, receives an operation on the $g_j^{\text{th}}$ move, and receives an operation with probability $p$ for every move afterwards. Note that any operations on the vertex $a_{i-1}$ do not affect the tree $F_t \langle a_i, a_{i-1} \rangle$ (similarly any operations on the vertex $m \in H_t$ do not affect $H_t$). Hence to prove the claim above, it suffices to show that for any pair $(j, t) \in [n] \times \mathbb{Z}^+$ such that vertex $j$ may receive an operaton on step $t$ (i.e. if the value of $g_j$ is prescribed then $t \geq g_j$),
    \begin{align}\label{eqInducedSubgraphCommute}
        (F_t[j])\langle a_i, a_{i-1} \rangle = (F_t\langle a_i, a_{i-1} \rangle)[j],
    \end{align}
    where again $F[j]$ denotes the forest obtained by operating on vertex $j$ of a forest $F$. Notably, for any $j \not \in [a_i, a_{i-1}]$, we must show that $F_t[j]\langle a_i, a_{i-1} \rangle = F_t \langle a_i, a_{i+1} \rangle$.

    To show the above, first note that as per the proof of Proposition \ref{propTreeProperties} (b), any operation on a vertex $j$ for $j < a_i$ does not affect the induced subgraph on $[a_i, a_{i-1}]$. So it suffices to prove Equation \ref{eqInducedSubgraphCommute} for $j \geq a_i$. Now, observe that for any ordered forest $F'$ and vertex $j' \in V(F')$, the set of descendants $\mathfrak{d}_{j''}$ of any vertex $j'' \in V(F')$ is left unchanged if $j'' \neq j'$, while $\mathfrak{d}_{j'}$ is replaced with a subset of itself (proper if $j'$ is not a leaf). Also note that given any ordered forest $F'$, we can retrieve $F'$ using the collection of sets $\{\mathfrak{d}_{j'}\}_{j' \in V(F')}$. With this in mind:
    \begin{itemize}
        \item For $j \in [a_i, a_{i-1}-1]$, observe that $g_j < g_{a_i} \leq g_{a_{i-1}}$, so after the first operation on $j$, the vertices in $[a_{i-1}, n]$ are no longer descendants of $j$. Thus for all $t > g_j$, all the descendants of $j$ are vertices in $[a_i, a_{i-1} - 1]$. Since Equation \eqref{eqInducedSubgraphCommute} is clearly true for $(j, g_j)$, we have that Equation \eqref{eqInducedSubgraphCommute} is in fact true for all $t$ when $j \in [a_i, a_{i-1} - 1]$.
        \item For all other $j$, since $g_{a_i} \leq g_{a_{i-1}}$, we have that on the $g_{a_i}^\text{th}$ move, either $a_{i-1}$ is not operated on or it is operated on after $a_i$ is. Either way, all operations on $a_{i-1}$ occur after $a_i$ and $a_{i-1}$ are disconnected (at which point $[a_i, a_{i-1}]$ is downward-closed, in the sense that any descendant of any vertex in $[a_i, a_{i-1}]$ is also in $[a_i, a_{i-1}]$. Notably no vertex in $[a_{i-1} + 1, n]$ can ever be connected to a vertex in $[a_i, a_{i-1}-1]$ via an edge. Hence we have that Equation \eqref{eqInducedSubgraphCommute} is true (for all $t$) for $j = a_i$ as well as all $j \in [a_{i-1}, n]$, as desired. Thus the claim is proved.
    \end{itemize}
    With the claim in hand, consider any $\gamma \in [g_{a_i} - 1]^{m-1}$. Run $\textbf{U}_{\Tam_m}$ starting at $\hat{1}$, and let $E_{\eta_\gamma}'$ be the event that $g_i = d_i$ for all $i \leq m - 1$, and $g_m < g_1$. Let $\mathfrak{t}$ denote the number of steps after the $(g_{a_i} - 1)^\text{th}$ in $\textbf{U}_{\Tam_m}$ until the vertex $1$ has no more children. Recall that for any forest in $\Tam_m$, the vertex labeled $m$ is always a leaf, so operating on $m$ will not change the forest. Thus the sequence of forests $\{H_t'\}_{t \geq 0}$ obtained by starting at $H_0' = \hat{1} \in \Tam_m$ and running $\textbf{U}_{\Tam_m}$ conditioned on $E_{\eta_\gamma}'$, is identically distributed to the sequence of forests $\{H_t\}_{t \geq 0}$ obtained by starting at $\hat{1}$ and running the process $U^\gamma_{\Tam_m}$. The claim above implies these forests are identically distributed to the sequence $\{F_t \langle a_i, a_{i-1} \rangle \}_{t \geq 0}$.  Thus for any $\gamma$, we have that
    \begin{align*}
        \mathbb{P}(t_i' \geq f(m) | E_A \cap E \cap E_\gamma)= \mathbb{P}(\mathfrak{t} \geq f(m) | E_{\eta_\gamma}').
    \end{align*}
    Observe that the events $\{E_A \cap E_\gamma \cap E: \gamma \in [g_{a_i} - 1]^{m-1} \}$ partition $E_A \cap E$, while the events $\{E_{\eta_\gamma}' : \gamma \in [g_{a_i}-1]^{m-1}$ partition the event $E_{[1, m], g_{a_i}}$. By the $i$-independence of $E$, we have
    \begin{align*}
        \mathbb{P}(E_A \cap E \cap E_\gamma | E_A \cap E) = \mathbb{P}(E_A \cap E_\gamma | E_A) =\mathbb{P}(E_{\eta_\gamma}' | E_{[1, m], g_{a_i}}),
    \end{align*}
    where the last equality is clear. Thus we have that
    \begin{align*}
        \sum_\gamma \mathbb{P}(t_i' \geq f(m) | E_A \cap E \cap E_\gamma) \cdot \mathbb{P}(E_A \cap E \cap E_\gamma | E_A \cap E) &= \sum_{\eta_\gamma} \mathbb{P}(\mathfrak{t} \geq f(m) | E_{\eta_\gamma}')\mathbb{P}(E_{\eta_\gamma}' | E_{[1, m], g_{a_i}}), \\
        \sum_\gamma \frac{\mathbb{P}(\{t_i' \geq f(m)\} \cap E_A \cap E \cap E_\gamma)}{\mathbb{P}(E_A \cap E)} &= \sum_{\eta_\gamma} \frac{\mathbb{P}(\{\mathfrak{t} \geq f(m)\} \cap E_{\eta_\gamma}' \cap E_{[1, m], g_{a_i}})}{\mathbb{P}(E_{[1, m], g_{a_i}})} \\
        \mathbb{P}(t_i' \geq f(m) | E_A \cap E) &= \mathbb{P}(\mathfrak{t} \geq f(m) | E_{[1, m], g_{a_i}})
    \end{align*}
    Now since $m < n$, the inductive hypothesis on Lemma \ref{lemmaMain} implies that
    \begin{align*}
        \mathbb{P}(\mathfrak{t} \geq f(m) | E_{[1, m], g_{a_i}}) \geq 0.85,
    \end{align*}
    as desired.
    \end{proof}

    We now prove the following growth estimate on the random variables $\{t_i\}$.

    \begin{lemma}\label{lemmaIndiv}
        There exists an absolute constant $\Cl[abcon]{hitconst}$ such that for every $i_{\lceil \log \log n \rceil} < i \leq i_{2\lceil 201 \log \log n \rceil}$, there exists an event $E_i \subset E_A$ satisfying that:
        \begin{enumerate}
            \item The event $E_i$ is determined by the values of the random variables
            \begin{itemize}\item $D_{j, t}$, for $j \leq i$ and $t \in \mathbb{Z}^+$, and
                \item $S_{j, t}$, $B_{j, t}$, and $B_{j, t}'$, for $j \geq a_{i}$ and $t \in \mathbb{Z}^+$.
            \end{itemize}
            Notably, $E_i$ is independent of all other variables in $\{B_{i, t}\} \cup \{B_{i, t}'\} \cup \{C_{j, t}\} \cup \{D_{i, t}\} \cup \{D_{i, t}'\} \cup \{D_{i, t}^\dagger\}$, and for all $j \in [2, a_i)$ and $t \in \mathbb{Z}^+$, $E_i$ is conditionally independent of $S_{j, t}$ given $E_A$.
            \item $E_i$ is contained in the event that
            \begin{align*}
                t_i - t_{i-1} \geq \min\bigg(\Cr{hitconst}(pt_{i-1} - \sqrt{t_{i-1}})^2, \frac{f(a_{i-1} - a_i)}{17}\bigg);
            \end{align*}
            \item We have that $\mathbb{P}(E_i)$, conditioned on $E_A$ as well as the values of the random variables
            \begin{itemize}
                \item $D_{j, t}$, for $j \leq i-1$ and $t \in \mathbb{Z}^+$, and
                \item $S_{j, t}$, $B_{j, t}$, and $B_{j, t}'$, for $j \geq a_{i-1}$ and $t \in \mathbb{Z}^+$,
        \end{itemize}
            is always at least $0.15$.
        \end{enumerate}
    \end{lemma}

    \begin{proof}
        Observe that conditioned on $E_A$, $t_{i-1}$ is determined by the values of the random variables
        \begin{itemize}
                \item $D_{j, t}$, for $j \leq i-1$ and $t \in \mathbb{Z}^+$, and
                \item $S_{j, t}$, $B_{j, t}$, and $B_{j, t}'$, for $j \geq a_{i-1}$ and $t \in \mathbb{Z}^+$.
        \end{itemize}
        Denote this set of random variables as $\mathcal{B}_i$. Now, throughout this proof, condition on $E_A$ as well as the values of the random variables in $\mathcal{B}_i$; note that this conditioning is $i$-independent. Define the following events:
        \begin{enumerate}
            \item Let $E_{i, 1}$ be the event that 
            \begin{align*}
                \sum_{u=g_{a_i} + 1}^{g_1 - 1 + t_{i-1}} B_{a_i, u} \geq p(t_{i-1} + g_1 - g_{a_i} - 1) - \sqrt{t_{i-1} + g_1 - g_{a_i} - 1}.
            \end{align*}
            \item Define $\tau_0$ to be the smallest $t$ satisfying that
            \begin{align}\label{eq2}
                \sum_{u=g_1 + t_{i-1}}^{g_1 - 1 + t_{i-1}+t} D_{i, u} - B_{a_i, u}' = \left \lceil p(t_{i-1} + g_1 - g_{a_i} - 1) - \sqrt{t_{i-1} + g_1 - g_{a_i} - 1} -1\right \rceil.
            \end{align}
            If no such $t$ exists, define $\tau_0 = \infty$. Note that if some $t$ satisfies that the LHS of \eqref{eq2} is at least the RHS, then $\tau_0$ must be finite by discrete continuity. Now, for any $r \in \mathbb{R}^+$, let $E_{i, 2, r}$ be the event that $\tau_0$ satisfies
            \begin{align*}
                \tau_0 &\geq r(p(t_{i-1} + g_1 - g_{a_i} - 1) - \sqrt{t_{i-1} + g_1 - g_{a_i} - 1}-1)^2.
            \end{align*}
            Note that the events $E_{i, 1}$ and $E_{i, 2, r}$ measure the effect of the estimate on $t_i - t_{i-1}$ discussed in Heuristic \ref{heuristic2}.
            \item Let $t_i'$ be the number of steps after the $g_{a_i} - 1^\text{st}$ step until no vertex in $[a_i + 1, a_{i-1} - 1]$ is a child of $a_i$. Let $E_{i, 3}$ be the event that $t_i' \geq f(a_{i-1} - a_i+1)$.
            \item Like in the proof of Lemma \ref{lemmaApplyInductiveHypothesis}, let the random variable $F_t \in \Tam_n$ denote the forest obtained after running the Ungarian Markov chain for $t$ steps. Let $F_t \langle a_i, a_{i-1} - 1 \rangle$ denote the induced sub-forest of $F_t$ formed by the vertices in $[a_i, a_{i-1} - 1]$. Given any $t > g_{a_i}$, let the random variable $j_t$ denote the largest label of a vertex $j$ satisfying that $j$ is a root of $F_t \langle a_i, a_{i-1} - 1 \rangle$ with at least one child (if no such vertex exists set $j_t = a_i$). Moreover, let $u_0 = t_{i-1} + g_1 - 1$, and for $z \geq 1$, let $u_z$ be the random variable denoting the $z^\text{th}$ smallest $u$ satisfying $u > t_{i-1} + g_1 - 1$ and $D_{i, u} = 1$. Finally, let $Y_z = u_z - u_{z-1}$.
            \begin{itemize}
                \item Given any real $0 < \gamma_2 < 1$, let $E_{i, 4, \gamma_2}$ be the event that
                \begin{align*}
                    \sum_{t = g_{a_i} + 1}^{g_1-1 + t_{i-1}} B_{j_t, t} > \gamma_2 p(t_{i-1} + g_1 - g_{a_i}-1).
                \end{align*}
                \item Let $R = \min(\lceil \gamma_2 p (t_{i-1} + g_1 - g_{a_i} - 1) \rceil, a_{i-1} - a_i)$. Then given any real $0 < \gamma_3 < 1$, let $E_{i, 5, (\gamma_2, \gamma_3)}$ be the event that
                \begin{align*}
                    \sum_{i=1}^{R} Y_i \geq \gamma_3 R/p.
                \end{align*}
                Note that $E_{i, 4, \gamma_2}$ and $E_{i, 5, (\gamma_2, \gamma_3)}$ are an adaptation of the estimate on $t_i - t_{i-1}$ discussed in Heuristic \ref{heuristic1}.
            \end{itemize}
        \end{enumerate}

        Note that conditioned on $E_A$ and the values of the variables in $\mathcal{B}_i$, the events above are determined by the values of the random variables in $\{D_{i, t}\}_{i, t} \cup \{S_{j, t}, B_{j, t}, B_{j, t}'\}_{j, t}$, where $j \in [a_i, a_{i-1} - 1]$ and $t \in \mathbb{Z}^+$. Now, for any $r \in \mathbb{R}^+$ and $\gamma_2, \gamma_3 \in (0, 1)$, define the event $E_{i, r, \gamma_2, \gamma_3}$ as the intersection
        \begin{align*}
            E_{i, r, \gamma_2, \gamma_3} = E_{i, 1} \cap E_{i, 2, r} \cap E_{i, 3} \cap E_{i, 4, \gamma_2} \cap E_{i, 5, (\gamma_2, \gamma_3)}.
        \end{align*}
        Suppose $E_{i, r, \gamma_2, \gamma_3}$ is true; we will show that for appropriate choices of $r, \gamma_2, \gamma_3$, $E_{i, r, \gamma_2, \gamma_3}$ will satisfy our desired properties. Observe that $E_{i, r, \gamma_2, \gamma_3}$ is determined by the random variables described in the statement of the lemma. Now we will show that $E_{i, r, \gamma_2, \gamma_3}$ implies the desired bound on $t_i - t_{i-1}$. We will lower bound $t_i - t_{i-1}$ in two ways; for the first bound, assume the events $E_{i_1}$, $E_{i, 2, r}$, and $E_{i, 3}$. First, suppose that $t_i+ g_1 \leq t_i' + g_{a_i}$. We claim that:
        \begin{enumerate}
            \item If $t \in [g_{a_i} + 1, g_1 - 1 + t_{i-1}]$, then after $t$ steps,
            \begin{align*}
                \#\{ j \in [a_i, a_{i-1} - 1] : j\text{ child of }1\} \geq \sum_{u=g_{a_i} + 1}^{t} B_{a_i, u};
            \end{align*}
            \item If $t \in [g_1 + t_{i-1}, g_1 - 2 + t_i]$, then after $t$ steps,
            \begin{align}\label{eq514ineq2}
                 \#\{ j \in [a_i, a_{i-1} - 1] : j\text{ child of }1\} \geq \sum_{u=g_{a_i} + 1}^{g_1 - 1 + t_{i-1}} B_{a_i, u} -\bigg( \sum_{u=g_1 + t_{i-1}}^{t} D_{i, u} - B_{a_i, u}'\bigg).
            \end{align}
        \end{enumerate}
        Indeed, all choices of $t$ above are in the interval $[g_{a_i} + 1, g_{a_i} - 1 + t_i']$, so on the $t^\text{th}$ step, $a_i$ is a root in $F_t \langle a_i, a_{i-1} - 1 \rangle$ with at least one child. Meanwhile, any $t \leq t_i + g_1 - 1$ satisfies that right before the $t^\text{th}$ step, $a_i$ is still a child of the vertex $1$. Thus for any $t \in [g_{a_1} + 1, g_1 + t_{i-1} - 1]$, we have that each operation on $a_i$ increases the number of children of $1$ by one, implying the first inequality. Meanwhile, for any $t \in [g_1 + t_{i-1}, g_1 - 2 + t_i]$, we have that the number of children of $1$ in $[a_i, a_{i-1} - 1]$ increases by at least one whenever $a_i$ is operated on, and can only decrease (by one) whenever $1$ is operated on. This implies the second inequality, and thus the claim.
        
        Now, suppose $E_{i, 1}$, $E_{i, 2, r}$ and $t_i + g_1 \leq t_i' + g_{a_i}$ are all true. We know that after $g_1 - 1 + t_i$ steps, since $a_i$ is no longer connected to $1$, neither is any other vertex in $[a_i, a_{i-1} - 1]$ (by the definition of the preorder traversal). Since $g_1 - 1 + t_i \leq g_{a_i} - 1 + t_i'$, we have that after $g_1 - 2 + t_i$ steps, the rightmost child of $1$ must be the vertex $a_i$, and we must have $D_{i, g_1 - 1 + t_i} = 1$. Hence Inequality \eqref{eq514ineq2} implies:
        \begin{align*}
            1 &\geq \sum_{u=g_{a_i} + 1}^{g_1 - 1 + t_{i-1}} B_{a_i, u} -\bigg( \sum_{u=g_1 + t_{i-1}}^{g_1+t_i-1} D_{i, u} - B_{a_i, u}'\bigg), \\
            \sum_{u=g_1 + t_{i-1}}^{g_1+t_i-1} D_{i, u} - B_{a_i, u}' &\geq \sum_{u=g_{a_i} + 1}^{g_1 - 1 + t_{i-1}} B_{a_i, u} - 1.
        \end{align*}
        Since $E_{i, 1}$ is true, we find that
        \begin{align*}
            \sum_{u=g_1 + t_{i-1}}^{g_1+t_i-1} D_{i, u} - B_{a_i, u}' &\geq \left \lceil p(t_{i-1} + g_1 - g_{a_i} - 1) - \sqrt{t_{i-1} + g_1 - g_{a_i} - 1} - 1 \right \rceil,
        \end{align*}
        where taking the ceiling is justified since the left hand side is an integer. By discrete continuity, we thus have that $\tau_0$ exists and is at most $t_i - t_{i-1}$. Since $E_{i, 2, r}$ is true, we thus find
        \begin{align*}
            t_i - t_{i-1} \geq \tau_0 \geq r(p(t_{i-1} + g_1 - g_{a_i}-1) - \sqrt{t_{i-1} + g_1 - g_{a_i}-1})^2.
        \end{align*}
        Now, suppose $t_i + g_1 > t_i' + g_{a_i}$. Since $E_{i, 3}$ is true, we find that
        \begin{align*}
            t_i + g_1 &> t_i' + g_{a_i} \\
            t_i - t_{i-1} &\geq t_i' + g_{a_i} - g_1 - t_{i-1} + 1 \\
            &\geq f(a_{i-1} - a_i + 1) - (g_1 - g_{a_i}) - t_{i-1} + 1.
        \end{align*}
        where the second inequality is because both sides consist of integers. Hence, $E_{i, 1} \cap E_{i, 2, r} \cap E_{i, 3}$ always implies
        \begin{align*}
            t_i - t_{i-1} &\geq \min(r(p(t_{i-1} + g_1 - g_{a_i}-1) - \sqrt{t_{i-1} + g_1 - g_{a_i}-1})^2, f(a_{i-1} - a_i + 1) - (g_1 - g_{a_i}) - t_{i-1} + 1).
        \end{align*}
        We will now create a second lower bound for $t_i - t_{i-1}$ as follows. Let the random variable $\mathfrak{c}_i$ denote the number of vertices in $[a_i, a_{i-1} - 1]$ that are children of $1$ after $t_{i-1} + g_1-1$ steps. Suppose $\mathfrak{c}_i < a_i - a_{i-1}$. Then for all $t \leq t_{i-1} + g_1-1$ satisfying $B_{j_t, t} = 1$, we have that the number of vertices in $[a_i, a_{i-1} - 1]$ increases after the $t$th step. Assuming $E_{i, 4, \gamma_2}$ is true, this implies that
        \begin{align*}
            \mathfrak{c}_i > \gamma_2p(t_{i-1} + g_1 - g_{a_i} - 1).
        \end{align*}
        Hence regardless of whether $\mathfrak{c}_i < a_i - a_{i-1}$ or not, we have that
        \begin{align*}
            \mathfrak{c}_i \geq \min(\lceil\gamma_2p(t_{i-1} + g_1 - g_{a_i} - 1)\rceil, a_{i-1} - a_i) = R,
        \end{align*}
        where the ceiling may be taken since $\mathfrak{c}_i$ is an integer. Hence vertex $1$ must receive at least $R$ operations after $t_{i-1}$ to become disconnected from $a_i$. Now, if $E_{i, 5, (\gamma_2, \gamma_3)}$ is true, then
        \begin{align*}
            u_R - (t_{i-1} + g_1 - 1) = u_R - u_0 = \sum_{i=1}^R Y_i \geq \gamma_3 R/p.
        \end{align*}
        So it takes vertex $1$ at least $\gamma_3 R/p$ steps to receive those $R$ operations, and hence $t_i - t_{i-1} \geq \gamma_3 R/p$. Combining the above estimates, we have that if $E_{i,r, \gamma_2, \gamma_3}$ is true, then
        \begin{align*}
            t_i - t_{i-1} &\geq \max\Bigg(\min\bigg(\gamma_3 \gamma_2 (t_{i-1} + g_1 - g_{a_i}-1), \frac{\gamma_3 (a_{i-1} - a_i)}{p}\bigg), \\
            &\min\Big(r(p(t_{i-1} + g_1 - g_{a_i}-1) - \sqrt{t_{i-1} + g_1 - g_{a_i}-1})^2, f(a_i - a_{i-1}+1) - (t_{i-1} + g_1 - g_{a_i})+1\big)\Bigg). 
        \end{align*}
        Set $\gamma_2 = \gamma_3 = 1/4$, and consider any $r \leq 1/16$. Observe that
        \begin{align*}
            \max\bigg(\frac{t_{i-1} + g_1 - g_{a_i}-1}{16}, f(a_i - a_{i-1} + 1) - (t_{i-1} + g_1 - g_{a_i}-1)\bigg) \geq \frac{f(a_i-a_{i-1}+1)}{17}.
        \end{align*}
        Every integer $n \geq 1$ satisfies $f(n + 1) \leq n$, so $f(a_i - a_{i-1} + 1)/17 < (a_{i-1}-a_i)/4p$. Hence we have that
        \begin{align*}
            t_i - t_{i-1} \geq \min\bigg(r(p(t_{i-1} + g_1 - g_{a_i}-1) - \sqrt{t_{i-1} + g_1 - g_{a_i}-1})^2, \frac{f(a_i - a_{i-1} + 1)}{17}\bigg)
        \end{align*}
        Finally, the function $q(x) = r(px - \sqrt{x})^2$ is increasing if $x > p^2 / 2$, and positive if $x > 1/p^2$. Adjust $\Cr{421}$ to be at least $2$; then $t_{i-1} \geq t_{i_{\lceil \log \log n \rceil}} \geq \log \log n \geq \Cr{421}/p^2 > \max(p^2/2, 1/p^2)$, so $q(x)$ is increasing in $[t_{i-1}, \infty)$. Since $g_1 - g_{a_i} - 1 \geq 0$ and $f$ is nondecreasing, we find that if $E_{i, r, 1/4, 1/4}$ is true, then
        \begin{align*}
            t_i - t_{i-1} \geq \min\bigg(r(pt_{i-1} - \sqrt{t_{i-1}})^2, \frac{f(a_i - a_{i-1})}{17}\bigg)
        \end{align*}
        Hence it now suffices to prove that for some (sufficiently small) absolute constant $\Cr{hitconst} \in (0, 1/16)$ we have that $\mathbb{P}(E_{i, \Cr{hitconst}, 1/4, 1/4}) \geq 0.15$. To do this, we bound the probabilities of the aforementioned events for general values of $r, \gamma_2, \gamma_3$ as follows. Throughout we again condition on $E_A$ as well as the random variables in $\mathcal{B}_i = \{D_{j, t}\}_{j \leq i - 1}\cup\{S_{j, t}, B_{j, t}, B_{j, t}'\}_{j \geq a_{i-1}}$; note that this conditioning fixes $t_{i-1}$ and $R$ and is $i$-independent.
        \begin{enumerate}
            \item By Hoeffding's inequality, we have that $\mathbb{P}(E_{i, 1}) \geq 1 - e^{-2}$.
            \item Let $D^\circ_{a_i, u} = D_{i, u + g_1 + t_{i-1}-1} - B_{a_i, u + g_1 + t_{i-1}-1}'$. Then note that each $D^\circ_{a_i, u}$ is independent with probability distribution
            \begin{align*}
                D^\circ_{a_i, u} &= \begin{cases} 1 & \text{ probability }p(1-p) \\ 0 & \text{ probability }p^2 + (1-p)^2 \\ -1 & \text{ probability }p(1-p). \end{cases}
            \end{align*}
            Thus the sequence $\{D^\circ_{a_i, 1}, D^\circ_{a_i, 1} + D^\circ_{a_i, 2}, \ldots\}$ defines a lazy simple random walk on $\mathbb{Z}$ with parameter $p(1-p)$, and hence we may apply Corollary \ref{corLazyLemmaRandomWalk} to obtain that
            \begin{align*}
                \mathbb{P}(E_{i, 2, r}) &\geq 1 - e^{-\Cr{hittingtime}/\sqrt{r}}.
            \end{align*}
            \item Since our conditioning is $i$-independent, we have by Lemma \ref{lemmaApplyInductiveHypothesis} that $\mathbb{P}(E_{i, 3}) \geq 0.85$.
            \item Recall that for any $i$, $t_{i-1} \geq \log \log n \geq \Cr{421}/p^2$. Note also that $g_1 - g_{a_i} - 1 \geq 0$. Now by applying a multiplicative Chernoff bound to the $B_{j_t, t}$, we retrieve that
            \begin{align*}
                \mathbb{P}(E_{i, 4, \gamma_2}) \geq 1 - \exp(-(1-\gamma_2)^2p(t_{i-1} + g_1 - g_{a_i} - 1))\geq 1 - \exp(-(1 - \gamma_2)^2\Cr{421}/2).
            \end{align*}
            \item Recall that our conditioning fixed $t_{i-1}$ and $R$. Since $t_{i-1} \geq \Cr{421}/p^2$, we have that $R \geq \min(\gamma_2\Cr{421}/p, 1)$. Now, under our conditioning, each $Y_i$ is an independent geometric variable of parameter $p$ taking values in $\mathbb{Z}^+$. Thus by applying Lemma \ref{lemmaSumGeomBound}, we have:
            \begin{align*}
                \mathbb{P}(E_{i, 5, \gamma_2, \gamma_3}) &= 1 - \mathbb{P}\bigg(\sum_{i=1}^{R} Y_i < \frac{R}{p} - (1-\gamma_3)(\sqrt{Rp})\sqrt{\frac{R}{p^3}}\bigg),\\
                &\geq 1 - \exp\bigg(\frac{-Rp(1-\gamma_3)^2}{2p - p(1-\gamma_3)}\bigg), \\
                &= 1 - \exp\bigg(\frac{-R(1-\gamma_3)^2}{1+\gamma_3}\bigg), \\
                &\geq 1 - \exp\bigg(\frac{-\min(\gamma_2\Cr{421}/p, 1)(1-\gamma_3)^2}{1+\gamma_3}\bigg).
            \end{align*}
        \end{enumerate}
        Finally, by plugging in $\gamma_2 = \gamma_3 = 1/4$ and applying a union bound we find that
        \begin{align*}
            \mathbb{P}(E_{i, \Cr{hitconst}, 1/4, 1/4}) \geq 1 - e^{-2} - e^{-\Cr{hittingtime}/\sqrt{\Cr{hitconst}}} - (1-0.85) - e^{-9\Cr{421}/32} - e^{-\min(9\Cr{421}/80, 9/20)}.
        \end{align*}
        Adjusting $\Cr{421}$ to be sufficiently large and $\Cr{hitconst}$ to be sufficiently small now gives us that $\mathbb{P}(E_{i, \Cr{hitconst}, 1/4, 1/4}) \geq 0.15$, as desired.
    \end{proof}
    
    We'll now prove the following lemma, which will allow us to apply the results of Lemma \ref{lemmaIndiv} to groups of indices $a_i$ at a time, and hence derive a steadier growth estimate for the sequence $\{t_{i_j}\}$. From this lemma onward, we will reuse the notation from Subsection \ref{ssMainTheoremFinish} that $k = \log n$.
    \begin{lemma}\label{lemmaLongRapidIncrease}
        For every $\lceil \log \log n \rceil \leq j \leq 2\lceil 201 \log \log n \rceil - 2$, there exists an event $G_j \subset E_A$ satisfying that:
        \begin{itemize}
            \item $G_j$ is determined by the random variables:
            \begin{itemize}
                \item $D_{u, t}$, for $u \leq i_{j+2}$ and $t \in \mathbb{Z}^+$, and
                \item $S_{u, t}$, $B_{u, t}$, and $B_{u, t}'$, for $u \geq a_{i_{j+2}}$ and $t \in \mathbb{Z}^+$;
            \end{itemize}
            \item $G_j$ is contained in the event that
            \begin{align*}
                t_{i_{j+2}} - t_{i_j} &\geq 0.01\min\bigg(\frac{f(n/k^j - n/k^{j+1})}{17}, \Cr{hitconst}(pt_{i_j} - \sqrt{t_{i_j}})^2\bigg);
            \end{align*}
            \item We have that $\mathbb{P}(G_j)$, conditioned on $E_A$ as well as the variables
            \begin{itemize}
                \item $D_{u, t}$, for $u \leq i_{j}$ and $t \in \mathbb{Z}^+$, and
                \item $S_{u, t}$, $B_{u, t}$, and $B_{u, t}'$, for $u \geq a_{i_{j}}$ and $t \in \mathbb{Z}^+$;
            \end{itemize}
            is always at least $0.1$.
        \end{itemize}
    \end{lemma}
    \begin{proof}
        For each $a \in \mathbb{R}_{\geq 0}$, define
        \begin{align*}
            h_j(a) &= \min \bigg( \Cr{hitconst}(pt_{i_j} - \sqrt{t_{i_j}})^2, \frac{f(a)}{17} \bigg).
        \end{align*}
        For any $i \in (i_{\lceil \log \log n \rceil}, i_{2\lceil 201 \log \log n \rceil}]$, let $E_i$ be the event from Lemma \ref{lemmaIndiv}. Note that for every $i \in [i_j + 1, i_{j+2}]$, the event $E_i$ is contained in the event that
        \begin{align*}
            t_i - t_{i-1} \geq h_j(a_{i-1} - a_i).
        \end{align*}
        Now, any nonnegative reals $a, b, c$ satisfy
        \begin{align*}
            \min(c, a) + \min(c, b) = \min(2c, a + c, b + c, a + b) \geq \min(c, a + b).
        \end{align*}
        Recall that for all nonnegative reals $a, b$, we have that $f(a) + f(b) \geq f(a + b)$. Hence $h_j(a) + h_j(b) \geq h_j(a + b)$ as well. Now, for each $i \in [1, i_{j+2} - i_j]$, let $X_i$ be the indicator function for $E_{i_j+i}$. Let $G_j$ be the event that
        \begin{align*}
            \sum_{i=1}^{i_{j+2} - i_j} h_j(a_{i+i_j-1} - a_{i+i_j})X_i \geq 0.01 h_j(a_{i_j} - a_{i_{j+2}}).
        \end{align*}
        We claim that $G_j$ satisfies the desired properties. Conditioned on $E_A$, $G_j$ clearly is determined by the random variables described. Now throughout the rest of this proof, condition on $E_A$, as well as the variables
        \begin{itemize}
                \item $D_{u, t}$, for $u \leq i_{j}$ and $t \in \mathbb{Z}^+$, and
                \item $S_{u, t}$, $B_{u, t}$ and $B_{u, t}'$, for $u \geq a_{i_{j}}$ and $t \in \mathbb{Z}^+$.
        \end{itemize}
        By Lemma $\ref{lemmaIndiv}$ we know that when conditioning on the variables above and on the variables
        \begin{itemize}
            \item $D_{u, t}$, for $i_j < u \leq i_{j} + i - 1$ and $t \in \mathbb{Z}^+$, and
            \item $S_{u, t}$, $B_{u, t}$ and $B_{u, t}'$, for $a_{i_j} > u \geq a_{i_{j} + i - 1}$ and $t \in \mathbb{Z}^+$,
        \end{itemize}
        we have that $\mathbb{P}(X_i = 1) \geq 0.15$. In particular, $\mathbb{P}(X_i = 1| X_1, \ldots X_{i-1}) \geq 0.15$ for all possible values of $\{X_1, \ldots X_{i-1}\}$ and $i \in [1, i_{j+2} - i_j]$, so $\mathbb{E}[X_i] \geq 0.15$.
        Now, let $q = 0.15$, and consider any real $r \in (0, q]$. We have by the Markov inequality that
        \begin{align*}
            \mathbb{P}\bigg(\sum_{i=1}^{i_{j+2} - i_j} h_j(a_{i+i_j-1} &- a_{i+i_j})(1 - X_i) \geq (1-r)\sum_{i=1}^{i_{j+2} - i_j} h_j(a_{i+i_j-1} - a_{i+i_j})\bigg) \\
            &\leq \frac{\mathbb{E}\bigg[\displaystyle\sum_{i=1}^{i_{j+2} - i_j} h_j(a_{i+i_j-1} - a_{i+i_j})(1-X_i)\bigg]}{(1-r)\displaystyle\sum_{i=1}^{i_{j+2} - i_j} h_j(a_{i+i_j-1} - a_{i+i_j})}, \\
            \mathbb{P}\bigg(\sum_{i=1}^{i_{j+2} - i_j} h_j(a_{i+i_j-1} &- a_{i+i_j})X_i \leq r\sum_{i=1}^{i_{j+2} - i_j} h_j(a_{i+i_j-1} - a_{i+i_j})\bigg) \\
            &\leq \frac{(1-q)\bigg(\displaystyle\sum_{i=1}^{i_{j+2} - i_j} h_j(a_{i+i_j-1} - a_{i+i_j})\bigg)}{(1-r)\displaystyle\sum_{i=1}^{i_{j+2} - i_j} h_j(a_{i+i_j-1} - a_{i+i_j})} = \frac{1-q}{1-r}, \\
            \mathbb{P}\bigg(\sum_{i=1}^{i_{j+2} - i_j} h_j(a_{i+i_j-1} &- a_{i+i_j})X_i > r\sum_{i=1}^{i_{j+2} - i_j} h_j(a_{i+i_j-1} - a_{i+i_j})\bigg) \geq \frac{q-r}{1-r}. \\
        \end{align*}
        Now, note that 
        \begin{align*}
            \sum_{i=1}^{i_{j+2} - i_j} h_j(a_{i+i_j-1} - a_{i+i_j}) &\geq h_j(a_{i_{j+2}} - a_{i_j}).
        \end{align*}
        Thus, setting $r = 0.01$ gives us that $\mathbb{P}(G_j) \geq 0.14/0.99 > 0.1$, as desired. Now, note that we always have $t_{i + i_j} - t_{i + i_j - 1} \geq h_j(a_{i+i_j-1} - a_{i+i_j})X_i$. Also, since $a_{i_j} \geq n/k^j$ (where $k = \log n$), $a_{i_{j+2}} \leq a_{i_j}/k$, and $h_j$ is nondecreasing, we have that $h_j(a_{i_j} - a_{i_{j+2}}) \geq h_j(n/k^j - n/k^{j+1})$. Thus, $G_j$ is contained in the event that
        \begin{align*}
            t_{i_{j+2}} - t_{i_j} = \sum_{i=i_j+1}^{i_{j+2}} t_i - t_{i-1} \geq 0.01 h_j(a_{i_j} - a_{i_{j+2}}) \geq 0.01 h_j(n/k^j - n/k^{j+1}),
        \end{align*}
        as desired.
    \end{proof}

    We now prove Lemma \ref{lemmaRapidIncrease}.

    \begin{proof}[Proof of Lemma \ref{lemmaRapidIncrease}]
        First, adjust $\Cr{421}$ to be at least $4$. Then by picking any (small) absolute constant $\Cl[abcon]{422} \in (0, \min(\Cr{hitconst}/400, 1))$, we find that for all $t \geq \Cr{421}/p^2$,
        \begin{align*}
            0.01\Cr{hitconst}(pt-\sqrt{t})^2 \geq \Cr{422}p^2t^2.
        \end{align*}
        Fixing $\Cr{422}$, adjust $\Cr{421}$ again such that $\Cr{421} \geq e/\Cr{422}$. Moving forward, each time we decrease $\Cr{422}$ we will correspondingly increase $\Cr{421}$ to ensure this inequality holds. Now, let $N$ be the number of even indices $j \in (\lceil \log \log n \rceil, 2\lceil 201 \log \log n \rceil - 2]$ for which $G_j$ is true. Let $A_1$ be the event that $N \geq 2 \log \log n$. We claim that $A_1$ satisfies the desired properties. Evidently, $A_1$ is determined by the variables listed. We now bound $\mathbb{P}(A_1)$. Condition on $E_A$. For each even $j \in (\lceil \log \log n \rceil, 2\lceil 201 \log \log n \rceil -2]$, let $X_{j/2 + 1 - \lceil (\log \log n)/2 \rceil}$ be the indicator function of $G_j$. Also, suppose there are $J$ such even $j$ in total. Then $N = \sum_{j=1}^J X_j$, and by an analogous argument to the one in Lemma \ref{lemmaLongRapidIncrease} we find that for all $j \in [J]$, $\mathbb{P}(X_j = 1 | X_{j-1}, \ldots X_1) \geq 0.1$. Now, let $Y_i = X_i - 0.1$, and let $Z_i = \sum_{k \leq i} Y_k$. Then
        \begin{align*}
            \mathbb{E}[Z_i | Z_1, \ldots Z_{i-1}] &= Z_{i-1} + \mathbb{E}[Y_i | X_1, \ldots X_{i-1}] \\
            &\geq Z_{i-1} - 0.1 + \mathbb{E}[X_i | X_1, \ldots X_{i-1}] \\
            &\geq Z_{i-1}.
        \end{align*}
        Hence $\{Z_i\}_i$ forms a super-martingale. Observe that $|Z_k - Z_{k-1}| = |Y_k| \leq 0.9$ for all $k$. Thus by the Azuma-Hoeffding inequality, we have that
        \begin{align*}
            \mathbb{P}\bigg(\sum_{i=1}^J X_i \geq 0.01J \bigg) &= \mathbb{P}\bigg(Z_J \geq -0.09J \bigg) \\
            &\geq 1 - \exp\bigg(\frac{-0.09J^2}{2 \cdot 0.9^2 \cdot J}\bigg) \\
            &= 1 - \exp (-J/18).
        \end{align*}
        Now, note that $J \geq \frac{401 \log \log n - 4}{2} - 1$, so $0.01 J \geq 2 \log \log n + 0.005 \log \log n - 0.03$. Adjust $\Cr{421}$ to be at least $6$; then $\log \log n \geq \Cr{421}/p^2  > 6$ so $0.01 J \geq 2 \log \log n$. This also gives us that $J \geq 200  \log \log n > 18 \log 10$, so $\exp(-J/18) < 0.1$. Hence $\mathbb{P}(A_1) \geq 0.9$, as desired. Finally, suppose $A_1$ is true. Let $s_i$ be the $i$th $j$ for which $G_j$ is true. First, note that for all $j \leq 2\lceil 201 \log \log n \rceil$, 
        \begin{align*}
            f(n/k^j - n/k^{j+1}) &\geq f(n/k^{403 \log \log n} - n/k^{403 \log \log n + 1}) \\
            &\geq f(n/2k^{403 \log \log n}) = f(n/2e^{403(\log \log n)^2}).
        \end{align*}
        Hence, each $G_j$ is contained in the event that $t_{i_{j+2}} - t_{i_j} \geq \min(f(n/2e^{403(\log \log n)^2})/1700, \Cr{422}p^2t_{i_j}^2)$. Now, recalling that $\log \log n \geq \Cr{421}/p^2 \geq e/\Cr{422}p^2$, we may find by induction that for all $i$,
        \begin{align*}
            t_{i_{s_i}} \geq \min(f(n/2e^{403(\log \log n)^2})/1700, e^{2^i}/\Cr{422}p^2).
        \end{align*}
        Observe that since the variables $\{X_i\}$ are integers, $A_1$ implies that $N \geq \lceil 2 \log \log n \rceil$. Thus, $A_1$ is contained in the event that
        \begin{align*}
            t_{i_{2\lceil 201 \log \log n \rceil}} &\geq t_{i_{s_{\lceil 2 \log \log n \rceil}}} \\
            &\geq \min \bigg( \frac{f(n/2e^{403 (\log \log n)^2})}{1700}, \frac{e^{2^{\lceil 2 \log \log n \rceil}}}{\Cr{422}p^2} \bigg ) \\
            &\geq \min \bigg( \frac{f(n/2e^{403 (\log \log n)^2})}{1700}, \frac{n}{\Cr{422}p^2} \bigg ) \\
            &\geq \frac{f(n/2e^{403 (\log \log n)^2})}{1700},
        \end{align*}
        as desired.
    \end{proof}

        We now prove a bound on $t_{i_{\ell}} - t_{i_{\ell-2}}$, for any even $\ell \in [2\lceil 201 \log \log n \rceil+2, 2\lceil 201 (\log \log n)^3 \rceil]$. Combined with the initial bound on $t_{i_{2\lceil 201 \log \log n \rceil}}$ given by Lemma \ref{lemmaRapidIncrease}, this will give us the key bound on $T_n$ that will allow us to prove Lemma \ref{lemmaMain}. Throughout the proof we will again let $k = \log n$.
        \begin{lemma}\label{lemmaExpInContext}
            For every even $\ell$ with $2\lceil 201 \log \log n \rceil + 2 \leq \ell \leq 2\lceil 201 (\log \log n)^3 \rceil$, there exists an event $V_\ell \subset E_A$ such that
            \begin{enumerate}
                \item $V_\ell$ is determined by the random variables
                \begin{itemize}
                    \item $D_{j, t}$, for all $j \in [n], t \in \mathbb{Z}^+$,
                    \item $S_{j, t}$, $B_{j, t}$ and $B_{j, t}'$, for $j \geq a_{i_{\ell}}$ and $t \in \mathbb{Z}^+$,
                    \item $D_{j, t}'$ and $C_{j, t}$, for $j \leq \ell$ and $t \in \mathbb{Z}^+$.
                \end{itemize}
                \item $V_\ell$ is contained in the event that
                \begin{align*}
                    t_{i_{\ell}} - t_{i_{\ell-2}} \geq \frac{\min(t_{i_{\ell-2}}, n/k^{\ell-2})}{4}.
                \end{align*}
                \item Conditioned on $E_A$, as well as the random variables
                \begin{itemize}
                    \item $D_{j, t}$, for all $j \in [n], t \in \mathbb{Z}^+$,
                    \item $S_{j, t}$, $B_{j, t}$ and $B_{j, t}'$, for $j \geq a_{i_\ell - 2}$ and $t \in \mathbb{Z}^+$,
                    \item $D_{j, t}'$ and $C_{j, t}$, for $j \leq \ell - 2$ and $t \in \mathbb{Z}^+$,
                \end{itemize}
                we have that $\mathbb{P}(V_\ell) \geq 1 - \exp(-49pt_{i_{\ell - 2}}/2560) - \exp(-\min(n/k^{\ell - 2},pt_{i_{\ell-2}})/12)$.
        \end{enumerate}
        \end{lemma}

        \begin{proof}
            This is roughly another application of the estimate presented in Heuristic \ref{heuristic1}. Condition on the variables $\{D_{j, t}\}_{j \in [n]}$, $\{D_{j, t}', C_{j, t}\}_{j \leq \ell - 2}$, $\{S_{j, t}, B_{j, t}, B_{j, t}'\}_{j \geq a_{i_{\ell-2}}}$ as discussed above. Observe that this conditioning fixes $t_{i_{\ell-2}}$. For any integer $m \in \mathbb{Z}^+$ satisfying $g_1 < m \leq t_{i_{\ell-2}} + g_1 - 1$, let $N_\ell(m)$ be the number of children of the vertex $1$ with indices in $[a_{i_{\ell}}, a_{i_{\ell-2}})$ on the $m^\text{th}$ step. For $m \leq t_{i_{\ell-2}} + g_1 - 1$, we have that on the $m^{\text{th}}$ step, all vertices in $[a_{i_{\ell}}, a_{i_{\ell-2}})$ are descendants of $1$. Hence on the $m^\text{th}$ step, any operation on a non-leaf child of $1$ in $[a_{i_{\ell}}, a_{i_{\ell-2}})$ increases the number of children of $1$ in $[a_{i_{\ell}}, a_{i_{\ell-2}})$. Thus either $N_\ell(m) = a_{i_{\ell-2}} - a_{i_{\ell}}$, or $N_\ell(m + 1) \geq N_\ell(m) + C_{\ell, m}$. Hence by induction, we have
            \begin{align*}
                N_\ell(t_{i_{\ell-2}} + g_1 - 1) &\geq \min\bigg((a_{i_{\ell-2}} - a_{i_{\ell}}),\sum_{i=g_1+1}^{t_{i_{\ell-2}}+g_1-2} C_{\ell, i} \bigg).
            \end{align*}
            Now note that $t_{i_{\ell - 2}} \geq 400 \log \log n \geq 400 \Cr{421}/p^2 \geq 400$, so $t_{i_{\ell - 2}} - 2 > 0.8t_{i_{\ell-2}}$. For any $\gamma_1 \in (0, 0.8)$, let $E_{6, \gamma_1}$ be the event that $\sum_{i=g_1+1}^{t_{i_{\ell-2}}+g_1-2} C_{\ell, i} > \gamma_1 p(t_{i_{\ell-2}}-2)$. By a multiplicative Chernoff bound, we have that for all $\gamma_1 \in (0, 0.8)$,
            \begin{align*}
                \mathbb{P}\Bigg(\sum_{i=g_1+1}^{t_{i_{\ell-2}}+g_1-2} C_{\ell, i} \leq \gamma_1 pt_{i_{\ell-2}}\Bigg) &\leq \mathbb{P}\Bigg(\sum_{i=g_1+1}^{t_{i_{\ell-2}}+g_1-2} C_{\ell, i} \leq 1.25\gamma_1 p(t_{i_{\ell-2}}-2)\Bigg) \\
                &\leq \exp\Big(-0.5(1.25\gamma_1-1)^2p(t_{i_{\ell-2}}-2)\Big) \\
                &\leq \exp\Big(-0.4(1.25\gamma_1-1)^2pt_{i_{\ell-2}}\Big) \\
                \mathbb{P}(E_{6, \gamma_1}) &\geq 1 - \exp\Big(-0.4(1.25\gamma_1-1)^2pt_{i_{\ell-2}}\Big).
            \end{align*}
            Now, let $X_0 = 0$, and for $i > 0$, let $X_i$ be the $i$th smallest index $j > 0$ such that $D_{\ell, g_1 - 1 + t_{i_{\ell-2}} + j}' = 1$. For any positive integer $i$, let $Y_i = X_i - X_{i-1}$. Note that the sequence of $\{Y_i\}$'s are distributed as a sequence of mutually independent geometric random variables with parameter $p$. Now, condition on $E_{6, \gamma_1}$; this gives us that $N_\ell(t_{i_{\ell - 2}} + g_1 - 1) \geq \min(0.8\gamma_1pt_{i_\ell}, a_{i_{\ell-2}} - a_{i_{\ell}})$. Letting $R_{\gamma_1} = \min(\lceil0.8\gamma_1pt_{i_\ell}\rceil, a_{i_{\ell-2}} - a_{i_\ell})$, we find that vertex $1$ must receive at least $R_{\gamma_1}$ operations after the turn $g_1 - 1 + t_{i_{\ell - 2}}$ until it is disconnected from all vertices in $[a_{i_{\ell}}, a_{i_\ell-2})$. Thus
            \begin{align*}
                t_{i_{\ell}} - t_{i_\ell-2} \geq \sum_{i=1}^{R_{\gamma_1}} Y_i.
            \end{align*}
            Now for any $\gamma_2 \in (0, 1)$, let $E_{7, \gamma_1, \gamma_2}$ be the event that $\sum_{i=1}^{R_{\gamma_1}} Y_i \geq \gamma_2R_{\gamma_1}/p$. Applying Lemma \ref{lemmaSumGeomBound} (b), we have that for all $\gamma_2 < 1$,
            \begin{align*}
                \mathbb{P}\bigg(\sum_{i=1}^{R_{\gamma_1}} Y_i < \frac{\gamma_2R_{\gamma_1}}{p}\bigg)  &= \mathbb{P}\bigg(\sum_{i=1}^{R_{\gamma_1}} Y_i < \frac{R_{\gamma_1}}{p} - (1-\gamma_2)\sqrt{R_{\gamma_1}p}\sqrt{\frac{R_{\gamma_1}}{p^3}}\bigg)\\
                &< \exp\Big(-(1-\gamma_2)^2R_{\gamma_1}/(1 + \gamma_2)\Big),\\
                \mathbb{P}(E_{7, \gamma_1, \gamma_2}) &\geq 1-\exp\Big(-(1-\gamma_2)^2R_{\gamma_1}/(1 + \gamma_2)\Big).
            \end{align*}
            Now, let the event $V_\ell$ be the intersection $V_{\ell} = E_{6, 5/8} \cap E_{7, 5/8, 1/2}$. Note that $a_{i_{\ell-2}} - a_{i_{\ell}} \geq a_{i_{\ell-2}}(1 - k) \geq n/2k^{\ell - 2}$, hence $R_{5/8} = \min(\lceil 0.5 pt_{i_\ell} \rceil, a_{i_{\ell - 2}} - a_{i_\ell}) \geq \min(pt_{i_\ell}, n/k^{\ell - 2})/2$. Thus, we have
            \begin{align*}
               \mathbb{P}(V_\ell) &\geq 1 - \exp(-49pt_{i_{\ell-2}}/2560) - \exp(-R_{5/8}/6), \\
               &\geq 1 - \exp(-49pt_{i_{\ell - 2}}/2560) - \exp(-\min(n/k^{\ell - 2},pt_{i_{\ell-2}})/12),
            \end{align*}
            as desired. Moreover, $V_\ell$ is contained in the event that
            \begin{align*}
                t_{i_{\ell}} - t_{i_{\ell-2}} \geq R_{5/8}/2p \geq \min(t_{i_{\ell-2}}, n/k^{\ell-2})/4.
            \end{align*}
            Finally, $V_{\ell}$ is clearly determined by variables listed in the statement of the Lemma, so we are done.
        \end{proof}

    We now return to the proof of Lemma \ref{lemmaMain}. Condition on $E_A$. Define $A_2$ to be the intersection of the events $V_\ell$ across all even indices $2\lceil 201 \log \log n \rceil \leq \ell \leq 2\lceil 201 (\log \log n)^3 \rceil$. Suppose $A_1 \cap A_2$ is true. By Lemma \ref{lemmaRapidIncrease}, we know that for all $\ell \geq 2\lceil 201 \log \log n \rceil$, $t_{i_\ell} \geq f(n/2e^{403(\log \log n)^2})/1700$. Thus for all even $2\lceil 201 \log \log n \rceil \leq \ell \leq 2\lceil 201 (\log \log n)^3 \rceil - 2$, we have by Lemma \ref{lemmaExpInContext} that
    \begin{align*}
        t_{i_{\ell+2}} \geq t_{i_\ell} + \frac{1}{4} \min\bigg(t_{i_\ell}, \frac{n}{k^\ell}\bigg) \geq \min\bigg(\frac{5}{4}t_{i_\ell},\frac{n}{4e^{402 (\log \log n)^4}} \bigg).
    \end{align*}
    Hence by induction, we have
    \begin{align*}
        t_{i_\ell} &\geq \min\bigg(\frac{n}{4e^{402(\log \log n)^4}}, \frac{f(n/2e^{403 (\log \log n)^2})}{1700}\bigg(\frac{5}{4}\bigg)^{\frac{\ell-2\lceil201 \log \log n\rceil}{2}}\bigg) \\
        t_{2\lceil 201 (\log \log n)^3 \rceil} &\geq \min\bigg(\frac{n}{4e^{402(\log \log n)^4}}, \frac{f(n/2e^{403 (\log \log n)^2})}{1700}\bigg(\frac{5}{4}\bigg)^{200(\log \log n)^3}\bigg) \\
        &\geq \min\bigg(\frac{n}{4e^{402(\log \log n)^4}}, \frac{f(n/2e^{403 (\log \log n)^2})}{1700}e^{44(\log \log n)^3}\bigg).
    \end{align*}
    Now, observe that as a function of $p$, $p^8\exp(\Cr{421}/p^2)$ achieves its minimum at $\sqrt{\Cr{421}}/2$, where it is equal to $\Cr{421}^4e^4/2^8$. Thus we may adjust $\Cr{421}$ such that for all $n > e^{e^{\Cr{421}/p^2}}$,
    \begin{align*}
        \frac{n}{4e^{402(\log \log n)^4}} &\geq f(n).
    \end{align*}
    Adjusting $\Cr{421}$ again, we may ensure
    \begin{align*}
        44 (\log \log n)^3 &> 403 (\log \log n)^2 + \log (3400), \\
        \frac{f(n/2e^{403 (\log \log n)^2})}{1700}e^{44(\log \log n)^3} &\geq f(n)\frac{e^{44(\log \log n)^3}}{3400e^{403 (\log \log n)^2}} \\
        &\geq f(n).
    \end{align*}
    Thus we find $t_{2\lceil 201 (\log \log n)^3 \rceil} \geq f(n)$. Hence $A_1 \cap A_2$ implies that
    \begin{align*}
        T_n &\geq t_{2\lceil 201 (\log \log n)^3 \rceil} + g_1 - 1 \\
        &\geq f(n) + g_1 - 1.
    \end{align*}
    Now it suffices to compute $\mathbb{P}(A_1 \cap A_2)$. Condition on the random variables
    \begin{itemize}
        \item $D_{j, t}$, for $j \leq i_{2\lceil 201 \log \log n \rceil}$ and $t \in \mathbb{Z}^+$;
        \item $S_{j, t}$, $B_{j, t}$ and $B_{j, t}'$, for $j \geq a_{i_{2\lceil 201 \log \log n \rceil}}$ and $t \in \mathbb{Z}^+$.
    \end{itemize}
    Recall from Lemma \ref{lemmaRapidIncrease} that these variables determine $A_1$. For any even $\ell \in (2\lceil 201 \log \log n \rceil, \\ 2 \lceil 201 (\log \log n)^3 \rceil]$, note that $V_{2 \lceil 201 \log \log n \rceil}, \ldots, V_l$ are all determined by the random variables
    \begin{itemize}
        \item $D_{j, t}$, for all $j \in [n], t \in \mathbb{Z}^+$,
        \item $S_{j, t}$, $B_{j, t}$ and $B_{j, t}'$, for $j \geq a_{i_{\ell}}$ and $t \in \mathbb{Z}^+$,
        \item $D_{j, t}'$ and $C_{j, t}$, for $j \leq \ell$ and $t \in \mathbb{Z}^+$.
    \end{itemize}
    Now by Lemma \ref{lemmaExpInContext} we always have that
    \begin{align*}
        \mathbb{P}(V_{\ell+2} | V_{2 \lceil 201 \log \log n \rceil}, \ldots V_l) &\geq 1 - \exp(-49pt_{i_{\ell - 2}}/2560) - \exp(-\min(n/k^{\ell - 2},pt_{i_{\ell-2}})/12) \\
        &\geq 1 - 2\exp(-49p(402 \log \log n)/2560) \\
        &\geq 1 - 2\exp(-p\log \log n).
    \end{align*}
    Thus $\mathbb{P}(V_{\ell + 2} | A_1) \geq 1 - 2\exp(-p\log \log n)$. Applying a union bound, we find
    \begin{align*}
        \mathbb{P}(A_2 | A_1) \geq 1 - 804 (\log \log n)^3 \exp(-p\log \log n).
    \end{align*}
    By Lemma \ref{lemmaRapidIncrease} we know $\mathbb{P}(A_1 | E_A) \geq 0.9$. Hence
    \begin{align*}
        \mathbb{P}(A_1 \cap A_2 | E_A) \geq 0.9 - (0.9 \cdot 804) (\log \log n)^3 \exp(-p\log \log n).
    \end{align*}
    Thus it suffices to show that we may adjust $\Cr{421}$ such that all $n > e^{e^{\Cr{421}/p^2}}$ satisfy
    \begin{align}\label{eqFinalInequality}
        (\log \log n)^3\exp(-p\log \log n) < 10^{-4}.
    \end{align}
    We have that $a^3\exp(-pa)$ is decreasing on $(3/p, \infty)$. Since $\Cr{421} \geq 4$, we have that for $n > e^{e^{\Cr{421}/p^2}}$,
    \begin{align*}
        (\log \log n)^3\exp(-p\log \log n) \leq \Cr{421}\exp(-\Cr{421}/p)/p^2 < \Cr{421}(\exp(-1/p)/p^2)^{\Cr{421}} < \Cr{421}(0.6^{\Cr{421}}),
    \end{align*}
    where in the last inequality we use $\exp(-1/x)/x^2 < 0.6$ for all $x \in (0, 1)$. Hence taking a sufficiently large $\Cr{421}$ gives us Inequality \eqref{eqFinalInequality}. This proves Lemma \ref{lemmaMain} as desired.


\begin{thebibliography}{99}

\bibitem{bjornerBrentiCoxeterGroupBook}
Anders Bj\"{o}rner, Francesco Brenti. \textit{Combinatorics of Coxeter groups}, volume 231 of \textit{Graduate Texts in Mathematics}. Springer, New York, 2005.

\bibitem{baik2008tracywidom}
Jinho Baik, Robert Buckingham, and Jeffrey DiFranco. Asymptotics of Tracy-Widom distributions and the total integral of a Painlev\'{e} II function. \textit{Comm. Math. Phys.}, 280(2):463--497, 2008.

\bibitem{thomas1990geommax}
F. Thomas Bruss and Colm Art O'Cinneide. On the Maximum and Its Uniqueness for Geometric Random Samples. \textit{J. Appl. Probab.}, 27(3):598--610, 1990.

\bibitem{defant2022tamari}
Colin Defant. Meeting covered elements in {$\nu$}-{T}amari lattices \textit{Adv. in Appl. Math.}, 134:Paper No. 102303, 34, 2023.

\bibitem{defant2023ungarian}
Colin Defant and Rupert Li. Ungarian Markov Chains. \textit{Electronic Journal of Probability}, 28:1--39, 2023.

\bibitem{johansson2000multicorner}
Kurt Johansson. Shape Fluctuations and Random Matrices. \textit{Comm. Math. Phys.}, 209(2):437--476,2000.

\bibitem{donKnuth}
Donald E. Knuth. \textit{The Art of Computer Programming. Volume 3.} Addison-Wesley Series in Computer Science and Information Processing. Addison-Wesley Publishing Co., Reading, Mass.-London-Don Mills, Ont., 1973. Sorting and searching.

\bibitem{reading2006cambrian1}
Nathan Reading. Cambrian Lattices. \textit{Adv. Math.}, 205(2):313--353, 2006.

%\bibitem{reading2007cambrian2}
%Nathan Reading. Sortable elements and Cambrian lattices. \textit{Algebra Universalis}, 56(3-4):411--437, 2007.

\bibitem{romik2014subsequences}
Dan Romik. \textit{The Surprising Mathematics of Longest Increasing Subsequences}, volume 4 of \textit{Institute of Mathematical Statistics Textbooks}. Cambridge University Press, New York, 2015.

\bibitem{scott1970ungarquestion}
P. R. Scott. On the Sets of Directions Determined by $n$ Points. \textit{Amer. Math. Monthly}, 77:502--505, 1970.

\bibitem{stanley1986enumerativecombo}
Richard P. Stanley. \textit{Enumerative Combinatorics. Vol. I.} The Wadsworth \& Brooks/Cole Mathematics Series. Wadsworth \& Brooks/Cole Advanced Books \& Software, Monterey, CA, 1986. With a foreword by Gian-Carlo Rota.

\bibitem{tamari1962}
Dov Tamari. The algebra of bracketings and their enumeration. \textit{Nieuw Arch. Wisk. (3)}, 10:131--146, 1962.

\bibitem{ungar1982}
Peter Ungar. $2N$ Noncollinear points determine at least $2N$ directions. \textit{J. Combin. Theory Ser. A}, 33(3):343--347, 1982.
 
\end{thebibliography}
\end{document}